\documentclass[10pt,a4paper]{amsart}
\usepackage[utf8]{inputenc}
\usepackage{amsmath,amssymb,mathrsfs,amsfonts,mathtools}
\usepackage{mathtools}
\usepackage{graphicx,geometry}
\usepackage{setspace}
\usepackage{float,comment,xcolor}
\graphicspath{{plot/}}
\newtheorem{theorem}{Theorem}[section]
\newtheorem{lemma}[theorem]{Lemma}

\newtheorem{prop}[theorem]{Proposition}
\newtheorem{cor}[theorem]{Corollary}
\newtheorem{fact}{Fact}

\theoremstyle{definition}
\newtheorem{definition}[theorem]{Definition}

\theoremstyle{remark}
\newtheorem{remark}[theorem]{Remark}

\numberwithin{equation}{section}

\title {Hypercomplex structures arising from twistor spaces}
\author{Shuo Wang}
\address{School of Mathematical  Sciences, University of Science and Technology of China, Hefei 230026 China}
\email{ws220122@mail.ustc.edu.cn}
\thanks{ B.X. is supported in part by the Project of Stable Support for Youth Team in Basic Research Field, CAS (Grant No. YSBR-001) and 
NSFC (Grant Nos. 12271495, 11971450 and 12071449).  }
\thanks{$^\dagger$B.X. is the corresponding author.}
\author{Bin Xu}
\address{CAS Wu Wen-Tsun Key Laboratory of Mathematics and School of Mathematical  \newline \indent Sciences, University of Science and Technology of China, Hefei 230026 China}
\email{bxu@ustc.edu.cn}

\begin{document}

\maketitle


\begin{abstract}
A hyperkähler manifold is defined as a Riemannian manifold endowed with three covariantly constant complex structures that are quaternionically related. A twistor space is characterized as a holomorphic fiber bundle $p: \mathcal{Z} \rightarrow \mathbb{CP}^1$ possesses properties such as a family of holomorphic sections whose normal bundle is $\bigoplus^{2n}\mathcal{O}(1)$, a holomorphic section of $\Lambda^2(N\mathcal{Z})\otimes p^*(\mathcal{O}(2))$ that defines a symplectic form on each fiber, and a compatible real structure. According to the Hitchin-Karlhede-Lindstr{\"o}m-Ro{\v{c}}ek theorem ({\it Comm. Math. Phys.}, 108(4):535-589, 1987), there exists a hyperkähler metric on the parameter space $M$ for the real sections of $\mathcal{Z}$.
Utilizing the Kodaira-Spencer deformation theory, we facilitate the construction of a hypercomplex structure on \(M\), predicated upon more relaxed presuppositions concerning \(\mathcal{Z}\).
This effort enriches our understanding of the classical theorem by Hitchin-Karlhede-Lindstr{\"o}m-Ro{\v{c}}ek.
\end{abstract}


\section{Introduction}

A \textit{hypercomplex manifold} $(M, \mathbf{I}, \mathbf{J}, \mathbf{K})$ is a smooth manifold $M$ equipped with three complex structures $\mathbf{I}$, $\mathbf{J}$, and $\mathbf{K}$ that satisfy the quaternionic relations: 
\[
\mathbf{I}\circ\mathbf{J} = -\mathbf{J}\circ\mathbf{I} = \mathbf{K}.
\]
A \textit{hyperkähler manifold} $(M, g, \mathbf{I}, \mathbf{J}, \mathbf{K})$ is a hypercomplex manifold $(M, \mathbf{I}, \mathbf{J}, \mathbf{K})$ endowed with a Riemannian metric $g$ on $M$, such that $(M, g)$ becomes a Kähler manifold with respect to the three complex structures $\mathbf{I}$, $\mathbf{J}$, and $\mathbf{K}$, each being covariantly constant under the Levi-Civita connection induced by $g$.

Let $(M, g, \mathbf{I}, \mathbf{J}, \mathbf{K})$ be a hyperkähler manifold. We define $\mathbf{I}_\zeta := a\mathbf{I} + b\mathbf{J} + c\mathbf{K}$ where 
\[(a, b, c) = \left(\frac{1-\zeta\bar\zeta}{1+\zeta\bar\zeta}, \frac{\zeta+\bar\zeta}{1+\zeta\bar\zeta}, \frac{i(-\zeta+\bar\zeta)}{1+\zeta\bar\zeta}\right) \in S^2\] 
for each $\zeta \in \mathbb{C} \cup \{\infty\} \cong \mathbb{CP}^1$. Consequently, the set $\{\mathbf{I}_\zeta | \zeta \in \mathbb{CP}^1\}$ forms a collection of complex structures on $M$, and $(M, g, \mathbf{I}_\zeta)$ constitutes a Kähler manifold for each $\zeta \in \mathbb{CP}^1$. Let $\omega_1 = g(\mathbf{I}\cdot,\cdot)$, $\omega_2 = g(\mathbf{J}\cdot,\cdot)$, and $\omega_3 = g(\mathbf{K}\cdot,\cdot)$.

From a hyperkähler manifold $M$ of real dimension $4n$, we define the \textit{twistor space}, denoted as $\mathcal{Z}$, as the smooth product manifold $\mathcal{Z} = M \times \mathbb{CP}^1$ equipped with an almost complex structure $\underline{\mathbf{I}}$. In $T_{(m,\zeta)}\mathcal{Z}$, the almost complex structure $\underline{\mathbf{I}}$ is given by $(\mathbf{I}_\zeta,\mathbf{I}_0)$, where $\mathbf{I}_0$ represents the operation of multiplying by $\sqrt{-1}$ on the tangent space of $\mathbb{CP}^1$. Hitchin, Karlhede, Lindström, and Roček \cite[Theorem 3.3]{hitchin1987hyperkahler} have shown that the almost complex structure $\underline{\mathbf{I}}$ constitutes a complex structure on $\mathcal{Z}$. Moreover, the natural projection $p: \mathcal{Z} \rightarrow \mathbb{CP}^1$ is a complex fiber bundle over $\mathbb{CP}^1$, whose fiber over $\zeta \in \mathbb{CP}^1$ is the Kähler manifold $(M, g, \mathbf{I}_\zeta)$, denoted as $\mathcal{Z}_\zeta$. In simpler terms, for each $x \in M$, there exists a complex projective line $\mathbb{CP}^1_x$ within $\mathcal{Z}$ associated with the point $x \in M$, such that: 
\[\mathcal{Z} = \bigcup_{\zeta \in \mathbb{CP}^1}(M, g, \mathbf{I}_\zeta) = \bigcup_{x \in M} \mathbb{CP}_x^1.\]
Furthermore, Hitchin-Karlhede-Lindstr{\"o}m-Ro{\v{c}}ek's theorem \cite[Theorem 3.3]{hitchin1987hyperkahler} states the following:
\begin{enumerate}
\item[(\romannumeral 1)] Each $\mathbb{CP}_x^1$ has a normal bundle that is isomorphic to $\bigoplus^{2n}\mathcal{O}(1)$.
\item[(\romannumeral 2)] The holomorphic section $\omega$ of $\Lambda^2(N\mathcal{Z})^*\otimes p^*(\mathcal{O}(2))$ is defined as follows: 
\[
\omega=
\begin{cases}
(\omega_2+i\omega_3)+2\zeta_0\omega_1-\zeta_0^2(\omega_2-i\omega_3) & \text{for } \zeta_0\in U_0\\
\zeta_1^2(\omega_2+i\omega_3)+2\zeta_1\omega_1-(\omega_2-i\omega_3) & \text{for } \zeta_1\in U_1
\end{cases}
\]
where $(U_0,\zeta_0)\cup(U_1,\zeta_1)/\sim$ are open charts on $\mathbb{CP}^1$ as defined in the second paragraph of Section 3 and $N\mathcal{Z}$ represents the vertical space of the fiber bundle $p$. In particular, $\omega|_{\mathcal{Z}_\zeta}$ is a symplectic form on the fiber $(M,g,I_\zeta)$ for each $\zeta\in\mathbb{CP}^1$.
\item[(\romannumeral 3)] There is an involution mapping $\tau$ on $\mathcal{Z}$ induced by the antipodal map on each $\mathbb{CP}_x^1$.
\end{enumerate}

Conversely, we designate $\mathcal{Z}$ as a \textit{twistor space} if it is a complex manifold of dimension $2n+1$ that satisfies the following conditions (1), (2), (3), and (4):
\begin{enumerate}
    \item $\mathcal{Z}$ is a holomorphic fiber bundle over $\mathbb{CP}^1$, denoted by $p: \mathcal{Z} \rightarrow \mathbb{CP}^1$.
    \item There exists a real structure $\tau$ on $\mathcal{Z}$ that is compatible with the antipodal map on $\mathbb{CP}^1$.
    \item $\mathcal{Z}$ admits a family $\mathcal{M}$ of holomorphic sections, where the normal bundle $T_Fs$ is isomorphic to $\bigoplus^{2n}\mathcal{O}(1)$ for each $s \in \mathcal{M}$.
    \item[(3$^\prime$)] $\mathcal{Z}$ admits a holomorphic section $s$ whose normal bundle is $T_Fs \cong \bigoplus^{2n}\mathcal{O}(1)$.
    \item There exists a holomorphic section $\omega$ of $\Lambda^2{(N\mathcal{Z})}^*\otimes p^*\mathcal{O}(2)$ such that:
    \begin{enumerate}
        \item $\omega$ defines a symplectic form on each fiber of $p: \mathcal{Z} \rightarrow \mathbb{CP}^1$.
        \item For each real section $s \in \mathcal{M}$, $\omega(\cdot, j(\tau, s)\cdot)$ is a negative definite quadratic form on $H^0\left(s(\mathbb{CP}^1), T_Fs\otimes\mathcal{O}(-1)\right) \cong \mathbb{C}^{2n}$.
    \end{enumerate}
\end{enumerate}
where $j(\tau, s)$ is a quaternionic structure on $H^0\left(s(\mathbb{CP}^1), T_Fs\otimes\mathcal{O}(-1)\right)$ induced by $\tau$. In \cite[Theorem 3.3]{hitchin1987hyperkahler}, Hitchin, Karlhede, Lindström, and Roček proposed a method for defining a hyperkähler manifold $M$ of real dimension $4n$ as the parameter space of real sections of the fiber bundle $p: \mathcal{Z} \rightarrow \mathbb{CP}^1$.
\begin{theorem}\cite[Theorem 3.3]{hitchin1987hyperkahler}
Let $\mathcal{Z}$ be a complex manifold and $\sigma$ the antipodal map over $\mathbb{CP}^1$. Assuming that conditions (1), (2), (3) and (4) are satisfied, and let $M$ be the set of real holomorphic sections in $\mathcal{M}$. If $M \neq \emptyset$, then it constitutes a smooth submanifold of $\mathcal{M}$. Furthermore, a hyperk\"ahler structure exists on $M$.
\end{theorem}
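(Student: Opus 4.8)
The plan is to recover the hyperkähler structure on $M$ by transporting linear-algebraic data from the cohomology of the twistor lines to the tangent spaces of $M$, using Kodaira--Spencer deformation theory to make the identifications canonical and to control smoothness.

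\medskip

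\noindent\textbf{Step 1: $M$ is a smooth manifold of dimension $4n$.}
First I would recall that, by Kodaira's theorem on stability of compact submanifolds, the space $\mathcal{M}$ of holomorphic sections near a given $s$ with $N_s \cong \bigoplus^{2n}\mathcal{O}(1)$ is a complex manifold of complex dimension $h^0(\mathbb{CP}^1, \bigoplus^{2n}\mathcal{O}(1)) = 4n$, with tangent space $T_s\mathcal{M} \cong H^0(\mathbb{CP}^1, N_s)$; the vanishing $H^1(\mathbb{CP}^1,\bigoplus^{2n}\mathcal{O}(1))=0$ is what guarantees unobstructedness and the expected dimension. The real structure $\tau$ induces an antiholomorphic involution on $\mathcal{M}$ whose fixed-point set is $M$; since $\tau$ is an involution, $M$ is (a union of components of) a totally real submanifold, hence a smooth real manifold of real dimension $4n$, provided it is nonempty. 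This is the part that is essentially bookkeeping, but it must be done carefully so that the later tensor fields are manifestly smooth.

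\medskip

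\noindent\textbf{Step 2: Construct the $2$-sphere of complex structures.}
For each $\zeta \in \mathbb{CP}^1$, restriction of a section to the fibre $\mathcal{Z}_\zeta$ gives an evaluation map $M \to \mathcal{Z}_\zeta$, and its differential identifies $T_xM$ with a real subspace of the complex vector space $T_{s(\zeta)}\mathcal{Z}_\zeta$. The key computation, as in \cite{hitchin1987hyperkahler}, is that $N_s \cong \bigoplus^{2n}\mathcal{O}(1)$ implies the evaluation map $H^0(\mathbb{CP}^1,N_s) \to (N_s)_\zeta$ is a real-linear isomorphism $T_xM \xrightarrow{\sim} (N_s)_\zeta$ for every $\zeta$, so each fibre $\mathcal{Z}_\zeta$ endows $T_xM$ with a complex structure $\mathbf{I}_\zeta$. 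One then checks that $\zeta \mapsto \mathbf{I}_\zeta$ traces out the standard $\mathbb{CP}^1$-family, i.e.\ that $\mathbf{I} := \mathbf{I}_0$, $\mathbf{J}$, $\mathbf{K}$ (reading off from the two standard coordinate patches) satisfy the quaternion relations $\mathbf{I}\mathbf{J} = -\mathbf{J}\mathbf{I} = \mathbf{K}$; this is exactly where the $\mathcal{O}(1)$-twist and the compatibility of $\tau$ with the antipodal map enter. In twistor language, the integrability of each $\mathbf{I}_\zeta$ and the holomorphic dependence on $\zeta$ are encoded in the fact that $\mathcal{Z}$ is an honest complex manifold fibred over $\mathbb{CP}^1$, so no separate Newlander--Nirenberg argument is needed for the individual $\mathbf{I}_\zeta$; integrability of $\mathbf{I}$, $\mathbf{J}$, $\mathbf{K}$ on $M$ does, however, require an argument, which I would extract from the holomorphicity of the evaluation maps $M\times\mathbb{CP}^1 \supset M\times\{\zeta\} \to \mathcal{Z}_\zeta$ together with Kodaira--Spencer (the tangent map being induced by a family of holomorphic maps forces the relevant Nijenhuis tensors to vanish).

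\medskip

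\noindent\textbf{Step 3: Produce the metric and verify the hyperkähler conditions.}
The fibrewise symplectic form $\omega$ of condition (4), viewed through the isomorphism $T_xM \cong H^0(\mathbb{CP}^1, N_s)$, is a section of a line bundle over $\mathbb{CP}^1$ of degree $2$; decomposing its space of global sections gives three real two-forms $\omega_1,\omega_2,\omega_3$ on $M$, with $\omega_{2}+i\omega_3$ holomorphic-symplectic for $\mathbf{I}$, etc. Condition (4)(b) — negative definiteness of $\omega(\cdot, j(\tau,s)\cdot)$ on $H^0(s(\mathbb{CP}^1), T_Fs\otimes\mathcal{O}(-1))$ — is precisely what makes $g := \omega_1(\cdot, \mathbf{I}\cdot)$ a positive definite Riemannian metric, and a short linear-algebra check shows $g$ is simultaneously compatible with $\mathbf{I}$, $\mathbf{J}$, $\mathbf{K}$ and that $\omega_k = g(\mathbf{I}_k\cdot,\cdot)$. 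It then remains to show each $\omega_k$ is closed: $\omega_2+i\omega_3$ is closed because it is the pullback of the holomorphic symplectic form on the fibre $\mathcal{Z}_0$ (a holomorphic symplectic form on a complex manifold is automatically $d$-closed), and similarly for the other distinguished points of $\mathbb{CP}^1$; closedness of all three $\omega_k$ plus compatibility with the integrable $\mathbf{I}_k$ gives, by the standard fact that a closed compatible form for an integrable complex structure is Kähler, that $(M,g,\mathbf{I}_k)$ is Kähler for $k=1,2,3$, hence $M$ is hyperkähler.

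\medskip

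\noindent\textbf{Main obstacle.}
I expect the crux to be Step 2 together with the closedness claims in Step 3: one must show not merely that each $\mathbf{I}_\zeta$ is a pointwise complex structure on $T_xM$ but that the whole family fits together into integrable tensor fields on $M$ and that the associated two-forms descend from holomorphic/symplectic data on $\mathcal{Z}$ in a $d$-closed way. The efficient route is to realize $M$ inside $\mathcal{M}$ and use that the twistor fibration $p\colon\mathcal{Z}\to\mathbb{CP}^1$ is holomorphic, so that all the structures on $M$ are obtained by restriction-and-evaluation from genuinely holomorphic objects on $\mathcal{Z}$; Kodaira--Spencer theory supplies the canonical isomorphism $T_s\mathcal{M}\cong H^0(\mathbb{CP}^1,N_s)$ that makes every one of these constructions smooth in $x\in M$. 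The condition (4)(b) is used exactly once, to get positivity of $g$, and nowhere else.
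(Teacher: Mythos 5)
Your proposal follows essentially the same route as the paper's proof: realize $M$ as the fixed-point set of the antiholomorphic involution $s\mapsto\overline{\tau\circ s\circ\sigma^{-1}}$ on the Kodaira moduli space $\mathcal{M}$, obtain the sphere of complex structures from the pointwise evaluation isomorphisms $T_sM\cong (T_Fs)_\zeta$ (which exist precisely because $T_Fs\cong\bigoplus^{2n}\mathcal{O}(1)$ and the reality condition $b=-j(a)$ forces sections vanishing at one point to vanish identically), build $g$ from $\omega(\cdot,j(\tau,s)\cdot)$ using (4)(b) for positivity, and deduce closedness of the Kähler forms by writing them as combinations of pullbacks $\psi_{\zeta_0}=f^*(\omega|_{\mathcal{Z}_{\zeta_0}})$ of the fibrewise holomorphic symplectic forms. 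The only cosmetic difference is that you phrase the closedness step via the decomposition of the $\mathcal{O}(2)$-valued form into $\omega_1,\omega_2,\omega_3$, whereas the paper evaluates $\psi_{\zeta_0}$ at specific points $\zeta_0=\pm 1$ etc.; these are equivalent.
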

This establishes a bi-directional correspondence between hyperkähler manifolds and holomorphic fiber bundles over $\mathbb{CP}^1$.

The primary aim of this manuscript is to deepen the understanding of the theorem posited by Hitchin, Karlhede, Lindström, and Roček \cite{hitchin1987hyperkahler}, specifically by incorporating condition (3) into the theoretical framework originally developed by Kodaira \cite{kodaira1962theorem}. In this study, we explore a complex manifold $W$ and its compact complex submanifolds $V$, following the foundational research by Kodaira \cite{kodaira1962theorem}, which investigates the deformation of complex submanifolds within the overarching space $W$. These changes are described by paramater space which is a complex manifold, usually considered simply as $\mathbb{B} = \big\{t \in \mathbb{C}^l \mid |t|_{\infty} < \varepsilon\big\}$, where $\varepsilon$ is a tiny positive number. As outlined in Definition \ref{analytic family of compact submanifolds}, an\textit{ analytic family of compact complex submanifolds} is characterized through a mapping adhering to four crucial conditions. These conditions ensure that every submanifold in the family maintains a connected and compact complex structure with a defined dimension, and is closely connected to the parameter space via a complex submanifold of higher dimension. This concept is encapsulated in the natural projection $\pi: \mathcal{V} \rightarrow \mathbb{B}$, where $\mathcal{V}$ denotes a complex submanifold of $W \times \mathbb{B}$. Here, for each $t \in \mathbb{B}$, $V_t := \pi^{-1}(t)$ is identified as a compact complex submanifold of $W$, with $V = V_0$ marking the initial condition for the deformation. Infinitesimal deformation of $V_t$ in $W$ is intuitively understood as a holomorphic section of the normal bundle $NV_t$ of $V_t$. This concept is rigorously defined by Kodaira \cite{kodaira1962theorem} through the introduction of the Kodaira-Spencer map $\sigma: T_t\mathbb{B} \rightarrow H^0(V_t, NV_t)$. This map associates tangent vectors in the parameter space with holomorphic sections of the normal bundle, capturing the deformation's essence. A cornerstone of Kodaira's findings is the assertion that, provided the cohomology space $H^1(V, NV)$ vanishes, an analytic family $\pi: \mathcal{V} \rightarrow \mathbb{B}$ of compact complex submanifolds $V$ in $W$ exists for a sufficiently small parameter space $\mathbb{B}$. This family is distinguished by the Kodaira-Spencer map being a linear isomorphism, rendering the deformation maximal in this localized context. These characteristics lay the groundwork for subsequent analyses, herein referred to as the canonical local deformation of $V$. For a comprehensive discussion and formal definitions, refer to Theorem \ref{main Kod1} and Definition \ref{maximal}.

Building upon Kodaira \cite{kodaira1962theorem}, this work use the concept of \textit{regular deformation space}, denoted as $\widetilde{\mathscr{M}_X}$. The notion of a regular deformation space emerges as an intuitively natural framework for exploring the deformation processes of compact complex submanifolds. However, the precise origins of this concept within the scholarly literature remain unclear to us. This space is envisioned as a refined parameter space that facilitates a deeper exploration of the deformation process for a compact complex submanifold $X$ within a complex manifold $W$. Through this endeavor, we aim to modestly augment Kodaira's original framework, offering additional insights into the complexities of deformation. Specifically, let $X$ be a compact complex submanifold of $W$. A compact complex submanifold $V$ of $W$ is termed a \textit{deformation} of $X$ if it is possible to transition from $X$ to $V$ via a sequence of local deformations in a finite number of steps. To elaborate, a compact complex submanifold $V$ of $W$ constitutes a deformation of $X$ in $W$ if there exist analytic families $\pi_k \colon \mathcal{V}_k \to \mathbb{B}_k$ for $k = 0, \ldots, n-1$, characterized by the properties: $X = \pi_0^{-1}(s_0)$, $\pi_{k+1}^{-1}(s_{k+1}) = \pi_k^{-1}(t_k)$, and $V = \pi_{n-1}^{-1}(t_{n-1})$ for certain $s_k, t_k \in \mathbb{B}_k$. The collection of all such deformations of $X$ in $W$ is represented as $\mathscr{M}_X$.

Under the assumption that $H^1(X, NX) = 0$, the \textit{regular deformation space} of $X$ in $W$, symbolized as $\widetilde{\mathscr{M}_X}$, is conceptualized as the subset of $\mathscr{M}_X$ comprising those deformations $V$ for which $H^1(V, NV) = 0$. This is formally expressed as:
\[
\widetilde{\mathscr{M}_X} = \left\{ V \in \mathscr{M}_X \middle| H^1(V, NV) = 0 \right\}.
\]
For each deformation $V$ within $\mathscr{M}_X$, considering the collection of all canonical local deformations of $V$ as its open neighborhoods establishes a topology $\mathscr{S}_X$ on this ensemble. More details are discussed in Lemma \ref{topo}. We then define the space $\widetilde{\mathscr{M}_X^l}$ as:
\[
\widetilde{\mathscr{M}_X^l}=\left\{V\in\widetilde{\mathscr{M}_X}\middle|H^0(V,NV)=l\right\},
\]
Within the structured framework of our analysis, $\widetilde{\mathscr{M}_X^l}$ is identified as an open subset of the topological space $\left(\widetilde{\mathscr{M}_X}, \mathscr{S}_X\right)$. Furthermore, the construction of $\widetilde{\mathscr{M}_X}$ is meticulously achieved through the disjoint union $\widetilde{\mathscr{M}_X} = \bigsqcup\limits_{l \in \mathbb{Z}_{\geq 0}} \widetilde{\mathscr{M}_X^l}$, each component inheriting the topology induced from $\widetilde{\mathscr{M}_X}$.  

\begin{theorem} {\rm (Theorem \ref{complex regular deformation space})}
The set $\left(\widetilde{\mathscr{M}^l_X}, \mathscr{S}_X\right)$ is endowed with a complex manifold structure of complex dimension $l$, provided that $\widetilde{\mathscr{M}^l_X}$ is non-empty, for every $l \in \mathbb{Z}_{\geq0}$.
\end{theorem}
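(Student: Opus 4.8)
The plan is to equip $\widetilde{\mathscr{M}^l_X}$ with an atlas whose charts are the canonical local deformations themselves, each modeled on an open polydisc in $\mathbb{C}^l$, and to derive the biholomorphy of the transition maps from the completeness of such deformations, which is precisely the content of Kodaira's completeness theorem \cite{kodaira1962theorem}.

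To build the charts, fix $V\in\widetilde{\mathscr{M}^l_X}$, so that $H^1(V,NV)=0$, and apply Theorem~\ref{main Kod1} to obtain a canonical local deformation $\pi_V\colon\mathcal{V}_V\to\mathbb{B}_V$ with $\pi_V^{-1}(0)=V$ whose Kodaira-Spencer map is an isomorphism from $T_0\mathbb{B}_V$ onto $H^0(V,NV)$; since $\dim_{\mathbb{C}}H^0(V,NV)=l$, the base $\mathbb{B}_V$ is an open polydisc in $\mathbb{C}^l$. I would then shrink $\mathbb{B}_V$ in two stages. First, because $\widetilde{\mathscr{M}^l_X}$ is open in $(\widetilde{\mathscr{M}_X},\mathscr{S}_X)$ and the canonical local deformations of $V$ form a neighborhood basis at $V$ (Lemma~\ref{topo}), I may assume the fiber map $\phi_V\colon\mathbb{B}_V\to\widetilde{\mathscr{M}_X}$, $t\mapsto\pi_V^{-1}(t)$, takes values in $\widetilde{\mathscr{M}^l_X}$, so that $H^1(V_t,NV_t)=0$ and $\dim H^0(V_t,NV_t)=l$ for every $t\in\mathbb{B}_V$. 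Secondly, with $\dim H^0(V_t,NV_t)$ now constant, the spaces $H^0(V_t,NV_t)$ form a holomorphic vector bundle of rank $l$ over $\mathbb{B}_V$, the Kodaira-Spencer maps assemble into a holomorphic homomorphism from $T\mathbb{B}_V$ into this bundle, and the set of $t$ at which it is an isomorphism is open and contains $0$; after shrinking $\mathbb{B}_V$ to it, the Kodaira-Spencer map of $\pi_V$ at every $t\in\mathbb{B}_V$ is an isomorphism onto $H^0(V_t,NV_t)$. By Lemma~\ref{topo}, $\phi_V$ is then a homeomorphism of $\mathbb{B}_V$ onto an open subset of $\widetilde{\mathscr{M}^l_X}$, and as $V$ ranges over $\widetilde{\mathscr{M}^l_X}$ these open sets cover it; this gives a candidate atlas $\{(\phi_V(\mathbb{B}_V),\phi_V^{-1})\}$ with all chart domains open in $\mathbb{C}^l$.

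For compatibility, suppose $\phi_V(\mathbb{B}_V)\cap\phi_{V'}(\mathbb{B}_{V'})\neq\emptyset$ and fix $t_0\in\mathbb{B}_V$ whose image lies in the overlap; put $Y:=\pi_V^{-1}(t_0)=\pi_{V'}^{-1}(s_0)$ with $s_0:=\phi_{V'}^{-1}(\phi_V(t_0))$. Since the Kodaira-Spencer map of $\pi_{V'}$ at $s_0$ is an isomorphism onto $H^0(Y,NY)$, the family $\pi_{V'}$ is complete at $s_0$ in Kodaira's sense; as $\pi_V$ near $t_0$ is an analytic family having $Y$ as its fiber over $t_0$, Kodaira's completeness theorem \cite{kodaira1962theorem} furnishes a holomorphic map $g$ from a neighborhood of $t_0$ in $\mathbb{B}_V$ into $\mathbb{B}_{V'}$ with $g(t_0)=s_0$ and $\pi_{V'}^{-1}(g(t))=\pi_V^{-1}(t)$ for all $t$ near $t_0$. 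Since $\phi_{V'}$ is injective (Lemma~\ref{topo}), this forces $g=\phi_{V'}^{-1}\circ\phi_V$ near $t_0$, so the transition map is holomorphic at $t_0$; as $t_0$ was an arbitrary point of the overlap it is holomorphic on all of it, and interchanging the roles of $V$ and $V'$ gives the same for its inverse. Hence all transition maps are biholomorphisms between open subsets of $\mathbb{C}^l$, and together with the first step this is a holomorphic atlas of complex dimension $l$; whatever separation and countability axioms one includes in the definition of a complex manifold are among the point-set properties of $(\widetilde{\mathscr{M}_X},\mathscr{S}_X)$ recorded in Lemma~\ref{topo}, and the atlas then upgrades to the asserted complex manifold structure on $\widetilde{\mathscr{M}^l_X}$.

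The step I expect to be the main obstacle is this last one and the preparation it rests on: one must arrange in the chart construction that the Kodaira-Spencer map is an isomorphism at \emph{every} fiber of the chart, not only at its center, so that the family is complete along the whole chart; and one must verify that the comparison map produced by Kodaira's completeness theorem is the set-theoretic transition map $\phi_{V'}^{-1}\circ\phi_V$ itself rather than merely a holomorphic map compatible with it, which is exactly where the injectivity of the parametrizations built into $\mathscr{S}_X$ enters.
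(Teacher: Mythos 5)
Your proposal is correct and takes essentially the same approach as the paper: the charts are the canonical local deformations of Theorem~\ref{main Kod1}, and holomorphy of the transition maps is extracted from the maximality (completeness) of these families. The paper routes the overlap comparison through a third canonical family centered at the overlap point and uses maximality of the two chart families to subsume it, rather than applying Kodaira's completeness theorem of one chart family directly to the other at a non-central parameter, and it reads the everywhere-isomorphism of the Kodaira--Spencer map straight off Theorem~\ref{main Kod1} so that your vector-bundle shrinking step is unnecessary --- but these are cosmetic differences.
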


By utilizing condition the condition (1) that $p:\mathcal{Z}\rightarrow\mathbb{CP}^1$ is a holomorphic fiber bundle and suppose $s$ is a holomorphic section of $Z$ given by condition ($3^\prime$), we can regard $s(\mathbb{CP}^1)$ as a compact complex submanifold in $\mathcal{Z}$. Leveraging the framework we established earlier, it is possible to define a complex manifold $\widetilde{\mathscr{M}_{s(\mathbb{CP}^1)}^{4n}}$ included  in the regular defromation space of $s(\mathbb{CP}^1)$ in $\mathcal{Z}$. Then define $\mathcal{M}$, which serves as an open submanifold $\widetilde{\mathscr{M}_{s(\mathbb{CP}^1)}^{4n}}$ as
\[
\mathcal{M}:=\left\{V \in \widetilde{\mathscr{M}^{4n}_{s(\mathbb{CP}^1)}} | V \text{ is a holomorphic section of } \mathcal{Z}; T_FV \cong \bigoplus^{2n}\mathcal{O}(1)\right\}
\]
The manifold $\mathcal{M}$ acts as the parameter space for holomorphic sections of $\mathcal{Z}$. In this case, we can prove that $\mathcal{Z}$ satisfies condition (3), i.e. (1)+($3^\prime$)$\Rightarrow$(3). The details of this process are further elaborated in Proposition \ref{open submanifold}. Furthermore, conditions (1), (2), and (3)$^\prime$ collectively facilitate the identification of a smooth manifold $M$, endowed with a hypercomplex structure, as a smooth submanifold of $\mathcal{M}$. This fact is corroborated by Theorem \ref{hypercomplex}. The hypercomplex structure on $M$ is further elaborated in this article, extending beyond the scope of Hitchin, Karlhede, Lindström, and Roček's original paper. Joyce also mentioned the existence of twistor construction for hypercomplex manifolds in \cite[Subsection 7.5.1]{joyce2000compact}. The details will be discussed in Theorem \ref{hypercomplex} as follows:

\begin{theorem}{\rm (Theorem \ref{hypercomplex})}
Consider a complex manifold $\mathcal{Z}$ equipped with an antipodal map $\sigma$ over the projective line $\mathbb{CP}^1$. Upon imposing conditions (1), (2), and (3) for $\mathcal{Z}$, let $M$ denote the collection of real holomorphic sections within $\mathcal{M}$. Should $M$ be non-empty, it inherently forms a smooth submanifold of $\mathcal{M}$, further endowed with a hypercomplex structure.
\end{theorem}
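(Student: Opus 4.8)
The plan is to follow the strategy of Hitchin--Karlhede--Lindström--Roček, but to replace the symplectic input (condition (4)) by a direct, deformation-theoretic production of three integrable complex structures on $M$. By Proposition~\ref{open submanifold} and Theorem~\ref{complex regular deformation space}, the parameter space $\mathcal{M}$ of holomorphic sections is a complex manifold with $\dim_{\mathbb{C}}\mathcal{M}=\dim_{\mathbb{C}}H^0\big(\mathbb{CP}^1,\bigoplus^{2n}\mathcal{O}(1)\big)=4n$, whose holomorphic tangent space at a section $V=s_V(\mathbb{CP}^1)$ is $T_V\mathcal{M}\cong H^0(\mathbb{CP}^1,T_FV)$ via Kodaira--Spencer. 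First I would check that the real structure $\tau$ together with the antipodal map $\sigma$ induces an anti-holomorphic involution $\hat\tau$ of $\mathcal{M}$, $V\mapsto\tau(V)$: since $\tau$ is a fibrewise anti-biholomorphism covering $\sigma$, the set $\tau(V)=(\tau\circ s_V\circ\sigma)(\mathbb{CP}^1)$ is again a holomorphic section (a holomorphic map, being a composite of two anti-holomorphic maps with a holomorphic one), and $d\tau$ is a conjugate-linear bundle isomorphism $T_FV\to T_F(\tau V)$ over $\sigma$ under which the isomorphism class $\bigoplus^{2n}\mathcal{O}(1)$ (and the vanishing of $H^1$) is preserved, so $\tau(V)\in\mathcal{M}$. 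Then $M=\mathrm{Fix}(\hat\tau)$, and by the standard fact that the fixed-point set of an anti-holomorphic involution, when non-empty, is a totally real smooth submanifold of real dimension equal to the ambient complex dimension, $M$ is a smooth submanifold of $\mathcal{M}$ with $\dim_{\mathbb{R}}M=4n$, and $T_xM=\{\xi\in H^0(\mathbb{CP}^1,T_Fs_x):\hat\tau_*\xi=\xi\}$ is a real form of $T_x\mathcal{M}$.

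Next, for each $\zeta\in\mathbb{CP}^1$ I would consider the evaluation map $F_\zeta\colon M\to\mathcal{Z}_\zeta:=p^{-1}(\zeta)$, $x\mapsto s_x(\zeta)$, which is smooth because the total evaluation $\mathcal{M}\times\mathbb{CP}^1\to\mathcal{Z}$, $(V,\zeta)\mapsto s_V(\zeta)$, is holomorphic (it is induced from the universal family of the analytic-family structure, Definition~\ref{analytic family of compact submanifolds}). Identifying $T_{s_x(\zeta)}\mathcal{Z}_\zeta$ with the fibre $(T_Fs_x)_\zeta$ of the vertical (normal) bundle, the differential $dF_\zeta|_x$ is the restriction to $T_xM$ of the $\mathbb{C}$-linear evaluation $\mathrm{ev}_\zeta\colon H^0(\mathbb{CP}^1,\bigoplus^{2n}\mathcal{O}(1))\to(T_Fs_x)_\zeta$. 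Its kernel is $H^0(\mathbb{CP}^1,\bigoplus^{2n}\mathcal{O})\cong\mathbb{C}^{2n}$, the sections vanishing at $\zeta$; but a $\hat\tau$-real section vanishing at $\zeta$ also vanishes at $\sigma(\zeta)\neq\zeta$, and a section of $\mathcal{O}(1)$ with two zeros is $0$, so $dF_\zeta|_x$ is injective, hence an $\mathbb{R}$-linear isomorphism onto $T_{s_x(\zeta)}\mathcal{Z}_\zeta$ (both sides have real dimension $4n$). Thus $F_\zeta$ is a local diffeomorphism, and $I_\zeta:=(dF_\zeta)^{-1}\circ J_\zeta\circ dF_\zeta$, where $J_\zeta$ is the complex structure of the complex manifold $\mathcal{Z}_\zeta$, is a well-defined almost complex structure on $M$, smooth in $x$ and $\zeta$, and \emph{integrable}, being the pullback of the integrable $J_\zeta$ under a local diffeomorphism.

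Third, I would extract a quaternionic triple from $\{I_\zeta\}$. Fix $x$ and choose a holomorphic trivialisation $T_Fs_x\cong\mathbb{C}^{2n}\otimes\mathcal{O}(1)$ compatible with the real structures; the real structure on $T_Fs_x$ then corresponds to $\rho\otimes\epsilon$, with $\epsilon$ the standard structure on $H^0(\mathbb{CP}^1,\mathcal{O}(1))\cong\mathbb{C}^2$ (which satisfies $\epsilon^2=-1$), forcing $\rho$ to be a quaternionic structure on $\mathbb{C}^{2n}$. This identifies $T_xM$ with $\mathbb{H}^n$ and, after a suitable trivialisation of $(T_Fs_x)_\zeta$, identifies $\mathrm{ev}_\zeta|_{T_xM}$ with left multiplication by $u:=1+\zeta\mathbf{j}\in\mathbb{H}^\times$; hence $I_\zeta$ becomes left multiplication by the unit imaginary quaternion $u^{-1}\mathbf{i}\,u=\bar u\,\mathbf{i}\,u/|u|^2$, a Möbius parametrisation by $\zeta$ of the $2$-sphere of imaginary units which reproduces the family $I_\zeta=a(\zeta)\mathbf{I}+b(\zeta)\mathbf{J}+c(\zeta)\mathbf{K}$ with $(a,b,c)\in S^2$ as in the introduction, where $\mathbf{I},\mathbf{J},\mathbf{K}$ correspond to three mutually orthogonal imaginary units (concretely $\mathbf{I}=I_0$, $\mathbf{J}=I_1$, $\mathbf{K}=I_{\sqrt{-1}}$). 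Each of $\mathbf{I},\mathbf{J},\mathbf{K}$ is integrable by the previous step, and, being left multiplications by a quaternionic triple, they satisfy $\mathbf{I}\mathbf{J}=-\mathbf{J}\mathbf{I}=\mathbf{K}$; since the residual freedom in the trivialisation is a $GL(n,\mathbb{H})$-action commuting with left multiplications, $\mathbf{I},\mathbf{J},\mathbf{K}$ are globally well-defined smooth tensor fields on $M$, which is then the required hypercomplex structure.

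The step I expect to be the main obstacle is the third one: one must verify carefully that the real structure that $\tau$ induces on the normal bundle of a real section is genuinely of the ``quaternionic $\otimes$ quaternionic'' form, and keep track of all sign and orientation conventions, so that the three distinguished members of the $S^2$-family satisfy the quaternionic relations exactly rather than merely up to signs; a closely related point is the independence of $u^{-1}\mathbf{i}\,u$ of the admissible trivialisation, which is what promotes the pointwise triple to globally defined fields on $M$. The submanifold property of $M$, the smoothness of the maps $F_\zeta$, and the integrability of the $I_\zeta$ are routine once the deformation-theoretic machinery of the earlier sections is in hand.
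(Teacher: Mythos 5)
Your proposal is correct and follows essentially the same route as the paper: realize $M$ as the fixed-point set of the induced antiholomorphic involution on $\mathcal{M}$, show the evaluation maps into the fibers $\mathcal{Z}_\zeta$ are local diffeomorphisms on $M$ (your ``two zeros of a section of $\mathcal{O}(1)$'' argument is the same fact the paper derives from $a=\zeta_0\, j(a)\Rightarrow a=0$), and extract the quaternionic triple from the resulting $S^2$-family of pulled-back complex structures. The only difference is presentational: where you package the tangent-space algebra as $T_sM\cong\mathbb{H}^n$ with $I_\zeta$ given by conjugation $u^{-1}\mathbf{i}u$, the paper works with the conjugate-linear map $j(\tau,s)$ and the isomorphisms $\Phi_{\alpha,\beta}$, together with an explicit normalization of the trivialization of $T_Fs$ --- which is exactly the sign/trivialization bookkeeping you flag as the delicate step.
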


We conclude the introduction by providing a comprehensive overview of the subsequent four sections of this manuscript. Section 2 delves into the deformation theory of compact complex submanifolds in complex manifolds, extending the foundational work of Kodaira \cite{kodaira1962theorem}. This section also introduces Lemma \ref{complex regular deformation space}, defining the regular deformation space within the realm of complex manifolds. In Section 3, the focus shifts to examining a real structure on $\bigoplus^{2n}\mathcal{O}(1)$. Proposition \ref{main section2} introduces a real structure on $H^0\big(\mathbb{CP}^1, \bigoplus^{2n}\mathcal{O}(1)\big)$ and delineates a quaternionic structure on $H^0\big(\mathbb{CP}^1, \mathbb{CP}^1\times\mathbb{C}^{2n}\big)$. Section 4 presents the parameter space $\mathcal{M}$, comprising holomorphic sections with the normal bundle $\bigoplus^{2n}\mathcal{O}(1)$. Utilizing theories from Section 2, $\mathcal{M}$ is established as a complex manifold. This section also demonstrates how the real structure $r$, introduced in Section 3, leads to the emergence of a submanifold $M$ within $\mathcal{M}$, and how the quaternionic structure $j$ bestows a hyperkähler structure on $M$. Section 5 sets forth a toy example to illustrate the relationship between hyperkähler manifolds $\mathbb{C}^{2n}$ and holomorphic fiber bundles over $\bigoplus^{2n}\mathcal{O}(1)$ on $\mathbb{CP}^1$.


\section{Deformation of compact complex submanifolds}

In this section, we will apply Kodaira's framework as delineated in \cite{kodaira1962theorem} to introduce the concept of an analytic family of compact submanifolds within a complex manifold $W$. Consider $X$ as a compact submanifold of $W$, meeting specific criteria. We aim to define a regular deformation space of $X$ in $W$, denoted as $\widetilde{\mathscr{M}_X}$, and to establish a complex structure on this space. This method aligns with the theory of deformation of complex analytic structures, initially developed by Kodaira and Spencer in their pioneering work \cite{kodaira1958deformations}.

Kodaira and Spencer, in their seminal article \cite{kodaira1958deformations}, established a foundational framework for studying deformation in complex manifolds. In the following sections, we will revisit and utilize several key definitions from their work.

\begin{definition}
\label{differentiable family of compact complex manifolds}
A \textit{differentiable family of compact complex manifolds} is defined as a differentiable fiber bundle $\pi\colon\mathcal{V}\rightarrow N$ over a differentiable manifold $N$, where each point of $\mathcal{V}$ has a neighborhood $\mathscr{U}\subset\mathcal{V}$ satisfying the following conditions: there exists a diffeomorphism $h\colon\mathscr{U}\rightarrow\mathbb{C}^n\times\pi(\mathscr{U})$ such that, for each point $t\in\pi(\mathscr{U})$, $V_t \coloneqq \pi^{-1}(t)$ is a compact complex manifold and the restriction of $h$ to $\mathscr{U}\cap V_t$ sends $\mathscr{U}\cap V_t\hookrightarrow\mathbb{C}^n\times\{t\}$ holomorphically, where $\mathbb{C}^n$ denotes the space of $n$ complex variables $\left(z^1,\cdots,z^n\right)$ and $n$ is the complex dimension of $V_t$. A \textit{complex analytic family of compact complex manifolds} is defined as a differentiable family of compact complex manifolds $\pi\colon\mathcal{V}\rightarrow N$ where $\pi\colon\mathcal{V}\rightarrow N$ is a holomorphic map and both $N$ and $\mathcal{V}$ are connected complex manifolds.
\end{definition}

\begin{definition}
\label{differentiable family of pi}
Let $\pi\colon\mathcal{V}\rightarrow N$ be a differentiable family of compact complex manifolds. A \textit{differentiable family of $\pi$} refers to a differentiable fiber bundle $\omega\colon\mathcal{B}\rightarrow\mathcal{V}$ with a structure group that is a complex Lie group $G$. The fiber bundle $\omega_t\colon B_t\rightarrow V_t$ is holomorphic when $\omega$ is restricted to each fiber $V_t \coloneqq \pi^{-1}(t)$, $t\in N$ of $\mathcal{V}$ and $B_t \coloneqq \omega^{-1}(V_t)$.

Let $\pi\colon\mathcal{V}\rightarrow N$ be a complex analytic family of compact complex manifolds. A \textit{complex analytic family of $\pi$} is defined as a differentiable family $\mathcal{B}\stackrel{\omega}{\rightarrow}\mathcal{V}\stackrel{\pi}{\rightarrow}N$ of $\pi$ that satisfies the condition of $\mathcal{B}\stackrel{\omega}{\rightarrow}\mathcal{V}$ being a holomorphic map.
\end{definition}

Utilizing the theoretical framework established in \cite{kodaira1958deformations}, Kodaira and Spencer formulated a pivotal theorem, the 'Principle of Upper Semi-continuity.' This theorem is elaborated and demonstrated in Kodaira's subsequent paper \cite{kodaira1957variation}.

\begin{theorem}
\label{Principle of Upper Semi-continuity}
Let $\pi\colon\mathcal{V}\rightarrow N$ be a differentiable family of compact complex manifolds as defined in \ref{differentiable family of compact complex manifolds}, and suppose the complex vector bundle $\omega\colon\mathcal{B}\rightarrow\mathcal{V}$ over $\mathcal{V}$ is also a differentiable family with respect to $\pi$ as defined in \ref{differentiable family of pi}. For each $t\in N$, let $V_t \coloneqq \pi^{-1}(t)$ and $B_t \coloneqq \omega^{-1}(V_t)$. Then there exists a neighborhood $U$ of $t\in N$ such that, for any $t^\prime\in U$ and $q\in\mathbb{Z}$,
\begin{equation*}
\dim H^q\left(V_{t^\prime}, B_{t^\prime}\right) \leq \dim H^q\left(V_{t}, B_{t}\right)
\end{equation*}
\end{theorem}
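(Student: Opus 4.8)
The plan is to follow the harmonic-theoretic argument of Kodaira and Spencer. First I would fix, once and for all, a $C^\infty$ family of Hermitian metrics on the fibres $V_t$ and a $C^\infty$ family of Hermitian fibre metrics on the holomorphic vector bundles $B_t$; this is possible because $\pi\colon\mathcal V\to N$ and $\omega\colon\mathcal B\to\mathcal V$ are differentiable families in the sense of Definitions \ref{differentiable family of compact complex manifolds} and \ref{differentiable family of pi}. These data produce, for each $t$, the $\bar\partial$-Laplacian $\Box_t$ acting on the space $A^{0,q}(V_t,B_t)$ of $B_t$-valued $(0,q)$-forms, a second-order self-adjoint elliptic operator. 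By the Dolbeault isomorphism together with Hodge theory on the compact complex manifold $V_t$, one has $H^q(V_t,B_t)\cong \mathbb H^q_t:=\ker\Box_t$, so it suffices to prove that $t\mapsto\dim\mathbb H^q_t$ is upper semi-continuous at the given point $t_0$; since $\mathbb H^q_t=0$ outside $0\le q\le n$, only finitely many values of $q$ must be treated and the resulting neighbourhoods intersected.

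Next I would pass to a fixed underlying space. Shrinking $N$ to a coordinate ball around $t_0$ and using the local product structure from Definition \ref{differentiable family of compact complex manifolds} (together with the corresponding structure for $\mathcal B$), I would trivialise the family differentiably so that all the spaces $A^{0,q}(V_t,B_t)$ are identified with a single space $A^{0,q}(M,E)$ on the fixed $C^\infty$ manifold $M:=V_{t_0}$ carrying the fixed bundle $E:=B_{t_0}$, while $\Box_t$ becomes a family of elliptic operators on $M$ whose coefficients depend smoothly on $t$; in particular $\Box_t-\Box_{t_0}$ is a differential operator of order $\le 2$ whose coefficients, in every $C^k$ norm, tend to $0$ as $t\to t_0$. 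I would then record the two standard analytic facts I need: (a) Gårding's inequality in a form uniform in $t$, i.e. a neighbourhood $U$ of $t_0$ and a constant $C$ with $\|\varphi\|_{k+2}\le C\big(\|\Box_t\varphi\|_k+\|\varphi\|_k\big)$ for all $t\in U$; and (b) for $t_0$ itself the Hodge decomposition $\mathrm{id}=H_{t_0}+\Box_{t_0}G_{t_0}$, where $H_{t_0}$ is the orthogonal projection onto $\mathbb H^q_{t_0}$ and $G_{t_0}$ the Green operator, both bounded on all the Sobolev spaces in play.

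The heart of the argument is then a short perturbation estimate. Given $t\in U$ and $\varphi\in\mathbb H^q_t$, so $\Box_t\varphi=0$, I would write $\varphi=H_{t_0}\varphi+G_{t_0}\Box_{t_0}\varphi$ and substitute $\Box_{t_0}\varphi=-(\Box_t-\Box_{t_0})\varphi$; applying the boundedness of $G_{t_0}$, the elliptic estimate (a) for $\Box_t$ (using $\Box_t\varphi=0$ to bound $\|\varphi\|_{2}$ by $\|\varphi\|_0$), and the smallness of the coefficients of $\Box_t-\Box_{t_0}$, one obtains $\|\varphi-H_{t_0}\varphi\|_{0}\le \varepsilon(t)\,\|\varphi\|_{0}$ with $\varepsilon(t)\to 0$ as $t\to t_0$. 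Hence, after shrinking $U$ so that $\varepsilon(t)<1$ on $U$, the linear map $H_{t_0}\colon\mathbb H^q_t\to\mathbb H^q_{t_0}$ is injective, which yields $\dim H^q(V_t,B_t)=\dim\mathbb H^q_t\le\dim\mathbb H^q_{t_0}=\dim H^q(V_{t_0},B_{t_0})$, as required.

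The main obstacle is entirely in the analytic bookkeeping of the second paragraph: one must set up the differentiable trivialisation carefully enough that $\Box_t$ genuinely depends smoothly on $t$ through its coefficients on the fixed manifold, and then prove the uniform Gårding inequality (a) with a constant independent of $t\in U$ — this is where compactness of $V_{t_0}$ and continuity of the symbol of $\Box_t$ enter, and it is the step that legitimises the clean perturbation estimate above. An alternative route that avoids writing out (a) is to note that the spectrum of $\Box_t$ is discrete, choose $c>0$ in a spectral gap of $\Box_{t_0}$, and show that the finite-rank spectral projection $E_{[0,c)}(\Box_t)$ varies continuously in operator norm near $t_0$, so that its rank is locally constant and $\dim\ker\Box_t\le\operatorname{rank}E_{[0,c)}(\Box_t)=\operatorname{rank}E_{[0,c)}(\Box_{t_0})=\dim\ker\Box_{t_0}$; this variant, however, still rests on the same continuity-of-coefficients input.
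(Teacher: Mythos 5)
The paper does not prove this theorem; it imports it from Kodaira--Spencer, citing Kodaira's paper on the variation of cohomology in differentiable families, and your argument is precisely the harmonic-theoretic proof given there: Dolbeault--Hodge reduction to $\ker\Box_t$, differentiable trivialisation onto a fixed manifold, a uniform G\aa rding inequality, and the perturbation estimate showing the harmonic projection $H_{t_0}$ is injective on $\mathbb H^q_t$ for $t$ near $t_0$. Your proposal is correct and matches the source's approach, including the correctly identified technical crux (smooth dependence of the coefficients of $\Box_t$ after trivialisation and the $t$-uniform elliptic estimate).
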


Kodaira's seminal paper \cite{kodaira1962theorem} introduces a critical concept in the study of complex manifolds: the analytic family of compact submanifolds. This concept is fundamental to understanding various aspects of compact complex submanifolds and is defined as follows:

\begin{definition}
\label{analytic family of compact submanifolds}
Let $W$ and $N$ be complex manifolds with complex dimensions $d+r$ and $l$, respectively. Let $j_1\colon W\times N\rightarrow W$ and $j_2\colon W\times N\rightarrow N$ be the natural projections. An \textit{analytic family of compact submanifolds} of $W$ with dimension $d$ and parameter space $N$ is a map $\pi\colon\mathcal{V}\rightarrow N$ satisfying the following conditions:
\begin{enumerate}
    \item $\mathcal{V}$ is a complex submanifold of $W\times N$ with codimension $r$.
    \item $\pi$ is the restriction of the map $j_2$ to $\mathcal{V}$.
    \item For each $t\in N$, $V_t \coloneqq \mathcal{V} \cap (W\times\{t\})$ is a connected and compact complex submanifold of $W\times\{t\}$ with dimension $d$.
    \item For each $p\in\mathcal{V}$, there exists a local chart
    \[
    p\in\Big(\mathscr{U}_p;w^1,\cdots, w^r, z^1,\cdots, z^d, t^1,\cdots, t^l\Big)\subset W\times N
    \]
    and holomorphic functions $f_1,\cdots, f_r$ on $\mathscr{U}_p$, such that
    \begin{align*}
    &(i) \mathcal{V}\cap\mathscr{U}_p=\Big\{(z,w,t)\in\mathscr{U}_p\colon f_1(z,w,t)=\cdots=f_r(z,w,t)=0\Big\}\\
    &(ii) \left.\textrm{rank}\frac{\partial(f_1, \cdots, f_r)}{\partial(w^1, \cdots, w^r,z^1,\cdots,z^d)}\right|_{p'}=r,\quad\forall p'\in\mathscr{U}_p
    \end{align*}
\end{enumerate}
\end{definition}

Next, we aim to establish the following proposition, which connects the analytic family of compact submanifolds, defined in Definition \ref{analytic family of compact submanifolds}, with the concept of a differentiable family of compact complex manifolds, as detailed in Definition \ref{differentiable family of compact complex manifolds}. This connection allows us to apply the Principle of Upper Semi-continuity (Theorem \ref{Principle of Upper Semi-continuity}) to our specific scenario.

\begin{prop}
Given an analytic family of compact submanifolds $\pi\colon\mathcal{V}\rightarrow N$ of a complex manifold $W$ with dimension $d$ and parameter space $N$, it follows that $\pi\colon\mathcal{V}\rightarrow N$ also constitutes a complex analytic family of compact complex manifolds.
\end{prop}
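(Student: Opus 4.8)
The plan is to verify the two defining requirements of Definition \ref{differentiable family of compact complex manifolds}: first, that $\pi \colon \mathcal{V} \to N$ is a differentiable fiber bundle whose fibers carry compatible local holomorphic charts, and second, that the total space $\mathcal{V}$, the base $N$, and the projection $\pi$ are all complex analytic and connected, so that we in fact obtain a \emph{complex analytic} family. Throughout I would exploit the local description furnished by condition (4) of Definition \ref{analytic family of compact submanifolds}.

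First I would fix $p \in \mathcal{V}$ and take the chart $(\mathscr{U}_p; w^1,\dots,w^r,z^1,\dots,z^d,t^1,\dots,t^l)$ together with the defining functions $f_1,\dots,f_r$. Condition (4)(ii) says the Jacobian of $(f_1,\dots,f_r)$ with respect to the fiber coordinates $(w,z)$ has rank $r$ at every point of $\mathscr{U}_p$; after relabelling I may assume that $\det \partial(f_1,\dots,f_r)/\partial(w^1,\dots,w^r) \neq 0$ on $\mathscr{U}_p$. By the holomorphic implicit function theorem I can then solve $f_i(w,z,t)=0$ for $w = \varphi(z,t)$ with $\varphi$ holomorphic in $(z,t)$, so that on a possibly smaller neighborhood $\mathcal{V} \cap \mathscr{U}_p$ is the graph $\{(\varphi(z,t),z,t)\}$. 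This gives a holomorphic chart $(z^1,\dots,z^d,t^1,\dots,t^l)$ on $\mathcal{V}$ near $p$ in which $\pi$ is simply the coordinate projection $(z,t) \mapsto t$; in particular $\pi$ is a holomorphic submersion. Combining these local graph parametrizations shows $\mathcal{V}$ is a complex submanifold of $W \times N$ of dimension $d+l$ and that $\pi$ is a holomorphic map, and the map $h \colon \mathscr{U} \to \mathbb{C}^d \times \pi(\mathscr{U})$, $(\varphi(z,t),z,t) \mapsto (z,t)$, is the diffeomorphism required in Definition \ref{differentiable family of compact complex manifolds}, restricting to a holomorphic embedding on each fiber slice $\mathcal{V} \cap (W \times \{t\})$.

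Next I would upgrade "locally a product" to "globally a differentiable fiber bundle." Since each fiber $V_t$ is compact (condition (3)), the submersion $\pi$ is proper, so by Ehresmann's fibration theorem $\pi \colon \mathcal{V} \to N$ is a locally trivial differentiable fiber bundle over $N$; the local charts constructed above already provide the product structure and the fiberwise-holomorphic trivializations demanded in Definition \ref{differentiable family of compact complex manifolds}. Connectedness of each $V_t$ is condition (3); connectedness of $N$ is part of the hypothesis that $N$ is a complex manifold in our convention, and connectedness of $\mathcal{V}$ then follows since $\mathcal{V}$ fibers over the connected base $N$ with connected fibers. Assembling these facts: $\pi \colon \mathcal{V} \to N$ is a differentiable family of compact complex manifolds, and because $\mathcal{V}$ and $N$ are connected complex manifolds with $\pi$ holomorphic, it is a complex analytic family of compact complex manifolds.

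The only genuinely delicate point is the passage from the pointwise rank condition (4)(ii) to a coherent global chart structure on $\mathcal{V}$ — that is, checking that the graph parametrizations obtained from different charts $\mathscr{U}_p$ patch together to make $\mathcal{V}$ a bona fide complex submanifold of $W \times N$ and $\pi$ a submersion everywhere, rather than just near one point. This is exactly where one uses that condition (1) of Definition \ref{analytic family of compact submanifolds} already asserts $\mathcal{V}$ is a complex submanifold of codimension $r$, so the local graph descriptions are forced to agree on overlaps; the remaining content is the routine verification that the fiber coordinates can always be chosen among the $(w,z)$ variables, which is precisely the rank hypothesis (4)(ii). Everything else — properness from compactness of fibers, Ehresmann, connectedness bookkeeping — is standard.
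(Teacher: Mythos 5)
Your proposal is correct and follows essentially the same route as the paper: both apply the holomorphic implicit function theorem to condition (4)(ii) to realize $\mathcal{V}$ locally as a graph over the $(z,t)$-coordinates, conclude that $\pi$ is a holomorphic submersion in these charts, and then invoke the Ehresmann fibration theorem together with compactness of the fibers to obtain the differentiable fiber bundle structure. The extra care you take about relabelling the $(w,z)$-variables and about the connectedness bookkeeping is detail the paper leaves implicit.
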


\begin{proof} 
Conditions (1), (2), and (3) imply that $\pi\colon\mathcal{V}\rightarrow N$ is a holomorphic surjective map. Without loss of generality, we can assume that $(z,w,t)(p)=0$ in condition (4). By the Implicit Function Theorem, condition (4-$ii$) implies that for any fixed $t_0\in j_2(\mathscr{U}_p)$ near $0$, there exists a unique simultaneous equation:
\[
\begin{cases}
w^1=\varphi^1(z^1,\cdots, z^d,t_0)\\
\vdots\\
w^r=\varphi^r(z^1,\cdots, z^d,t_0)
\end{cases}
\quad \text{such that}\quad
\begin{cases}
f_1(\varphi^1, \cdots, \varphi^r, z^1, \cdots, z^d,t_0)=0\\
\vdots\\
f_r(\varphi^1, \cdots, \varphi^r, z^1, \cdots, z^d,t_0)=0
\end{cases}
\]

for each $(z^1, \cdots, z^d)$ near $0$. Shrink $\mathscr{U}_p$ if necessary, then $\left(\mathscr{U}_p\cap V_{t_0},z^{r+1},\cdots, z^{r+d}\right)$ is a holomorphic chart on $V_{t_0}$ and $\left(\mathscr{U}_p\cap V_{t_0},z^{r+1},\cdots, z^{r+d}, t^1, \cdots, t^l\right)$ is a holomorphic chart on $\mathcal{V}$. The restriction of $\pi$ to $\mathscr{U}_p\cap\mathcal{V}$ is:
\[
\pi\colon\mathscr{U}_p\cap\mathcal{V}\rightarrow j_2(\mathscr{U}_p)\quad \Big(z^{r+1}, \cdots, z^{r+d}, t^1, \cdots, t^l\Big)\mapsto\Big(t^1, \cdots, t^l\Big)
\]
which implies that $\pi$ is a submersion. Furthermore, according to the Ehresmann fibration theorem \cite{ehresmann1950connexions}, each fiber of $\pi$ is a compact complex manifold, indicating that $\pi\colon\mathcal{V}\rightarrow N$ forms a differentiable fiber bundle. 
\end{proof}

In the context of exploring the complex manifold $W$ and its compact complex submanifold $V$, it becomes essential to define a specific system of local charts. These charts facilitate the examination of the manifold's local structure in the vicinity of $V$. We formalize this concept as follows:

\begin{definition}
\label{system of canonical local charts}
Define the \textit{system of canonical local charts of $W$ in the neighborhood of $V$} as a finite collection of charts $\Big\{\big(\mathscr{W}_k; w_k^1, \cdots, w_k^r, z_k^1, \cdots, z_k^d\big)\Big\}_{k\in I}$ on $W$. This collection satisfies the following conditions:
\begin{enumerate}
    \item $\bigcup_{k\in I} \mathscr{W}_k$ forms an open covering of $V$ in $W$.
    \item For each $k\in I$, the intersection $V\cap \mathscr{W}_k$ is defined by the conditions 
\[w^1_k = \cdots = w^r_k = 0.\]
\end{enumerate}
\end{definition}
This definition provides a framework for examining the local properties of $W$ around the submanifold $V$.

\begin{figure}[htbp]
    \centering
    \includegraphics[width=0.7\textwidth]{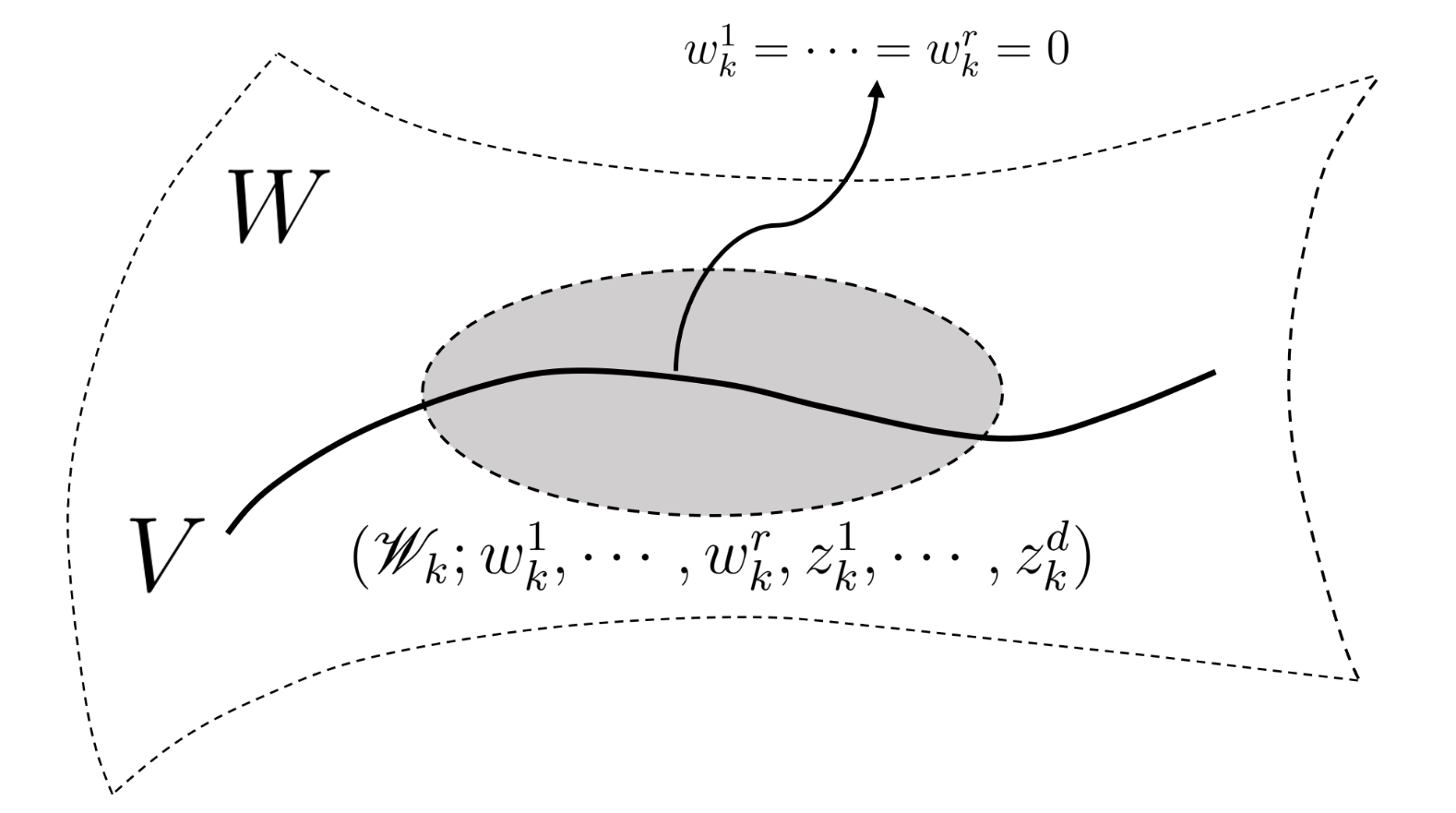}
    \caption{System of canonical local charts}
\end{figure}

\begin{definition}
\label{normal bundle}
The normal bundle $NV$ over a compact complex submanifold $V$ of $W$ is defined by the following short exact sequence:
\begin{equation*}
0 \rightarrow TV \rightarrow TW|_V \rightarrow NV \rightarrow 0.
\end{equation*}
\end{definition}

Consider $\Big\{\big(\mathscr{W}_k, w^1_k, \cdots, w_k^r, z^1_k, \cdots, z^d_k\big)\Big\}_{k \in I}$ as the system of canonical local charts in the neighborhood of $V$, as defined in Definition \ref{system of canonical local charts}. Let $\mathscr{V}_k := \mathscr{W}_k \cap V$ denote the intersection of $\mathscr{W}_k$ and $V$. The collection $\big\{\mathscr{V}_k\big\}_{k \in I}$ forms local charts for $V$, each with coordinates $\big(z_k^1, \cdots, z_k^d\big)$. To elucidate the structure of the normal bundle $NV$, we consider the open covering $V = \bigcup_{k \in I} \mathscr{V}_k$ and define the transformation functions between overlapping charts $\mathscr{W}_i$ and $\mathscr{W}_k$ as follows:
\[
\begin{cases}
w^\lambda_i = f_{ik}^\lambda(w_k, z_k) & \text{for } \lambda = 1, 2, \cdots, r \\
z^\mu_i = g_{ik}^\mu(w_k, z_k) & \text{for } \mu = 1, 2, \cdots, d
\end{cases}
\]
The normal bundle $NV$ is then constructed as $NV = \bigcup_{k \in I} (\mathscr{V}_k \times \mathbb{C}^r) / \sim$, where $\sim$ denotes the equivalence relation defined by the system of transition matrices $\big\{B_{ik}(z_k)\big\}$:
\[
B_{ik}(z_k) = \left( \left. \frac{\partial w_i^\lambda}{\partial w_k^\mu} \right|_{\mathscr{V}_i \cap \mathscr{V}_k} \right)_{\lambda, \mu = 1, \cdots, r} = \left( \left. \frac{\partial f_{ik}^\lambda(w_k, z_k)}{\partial w_k^\mu} \right|_{w_k = 0} \right)_{\lambda, \mu = 1, \cdots, r}
\]

Having established the local structure and normal bundle of a single compact complex submanifold $V$ in $W$, we now extend our discussion to a broader context. We consider an analytic family of compact submanifolds, represented by the map $\pi: \mathcal{V} \rightarrow N$. In this expanded framework, we aim to investigate the overall structure of each complex submanifold within this analytic family, as well as the corresponding structures of their normal bundles.

Let $\pi: \mathcal{V} \rightarrow N$ denote an analytic family of compact submanifolds within the complex manifold $W$. Assume $\mathbb{B}_\varepsilon^l = \Big\{t \in \mathbb{C}^l \mid \|t\|_\infty < \varepsilon\Big\}$ to be a sufficiently small neighborhood around the origin in $\mathbb{C}^l$, which will be considered as an open chart in $N$. This setting allows us to view $\pi: \mathcal{V} \rightarrow \mathbb{B}_\varepsilon^l$ as a parameterized analytic family of compact submanifolds of $W$ with dimension $d$. In this framework, $V_t$ is treated as a submanifold of both $W$ and the product space $W \times \{t\}$.

Given $\varepsilon > 0$, there exists a finite open covering $V_0 := \pi^{-1}(0) \subset \bigcup\limits_{k \in I} \mathscr{U}_k$ in $W \times \mathbb{B}_\varepsilon^l$. Each set $\mathscr{U}_k$ has local coordinates $\big(\mathscr{U}_k, w^1_k, \cdots, w^r_k, z^1_k, \cdots, z^d_k, t^1, \cdots, t^l\big)$ satisfying:

\begin{enumerate}
\item $\mathscr{U}_k = \mathscr{W}_k \times \mathbb{B}^l_\varepsilon$ for an open set $\mathscr{W}_k$ in $W$.
\item $\Big(\mathscr{W}_k, w^1_k, \cdots, w^r_k, z^1_k, \cdots, z^d_k\Big)$ forms a local chart on $W$.
\item For all $t \in \mathbb{B}_\varepsilon^l$, $\Big(\mathscr{V}_{k,t} := \mathscr{U}_k \cap V_t, z^1_k, \cdots, z^d_k\Big)$ serves as a local chart on $V_t$.
\end{enumerate}
The intersection of $V_t$ and $\mathscr{U}_k$ is represented by the simultaneous equations:
\[
\begin{cases}
w_k^1 = \varphi_k^1(z_k^1, \cdots, z_k^d, t^1, \cdots, t^l)\\
\vdots\\
w_k^r = \varphi_k^r(z_k^1, \cdots, z_k^d, t^1, \cdots, t^l)
\end{cases}
\]
where $\varphi^\lambda_k$ are holomorphic functions of $z_k$ and $t$. Furthermore, within $\mathscr{U}_k$, $V_0$ is defined by $w^1_k = \cdots = w^r_k = 0$.

\begin{figure}[htbp]
    \centering
    \includegraphics[width=0.8\textwidth]{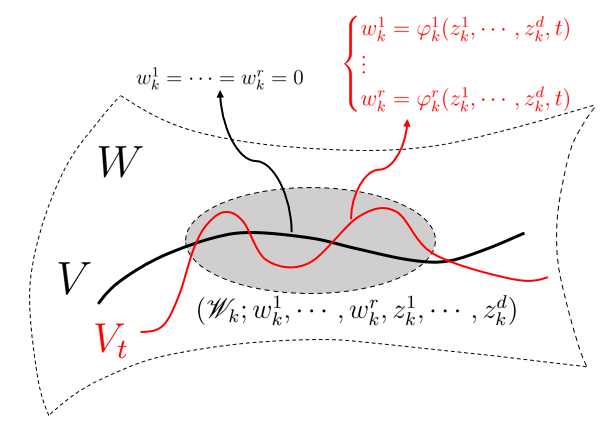}
    \caption{Deformation of compact complex submanifolds}
\end{figure}

In this scenario, the collection of open charts $\bigcup_{k \in I} (\mathscr{U}_k; w_k, z_k)$ forms a set of canonical local charts on $W$ for the neighborhood of $V$, as previously defined in Definition \ref{normal bundle}. We introduce new local coordinates on $\mathscr{W}_k$ by defining:
\[
\tilde w^\lambda_{k,t} := w_k^\lambda - \varphi^\lambda_k(z_k, t) \quad \text{and} \quad \tilde z_k^\mu = z_k^\mu,
\]
where $V_t$ is represented by $\tilde w^1_{k,t} = \cdots = \tilde w^r_{k,t} = 0$ within $\mathscr{U}_k$. Accordingly, the collection of open charts $\bigcup_{k \in I} (\mathscr{U}_k; \tilde z_k, \tilde w_k)$ then constitutes a set of canonical local charts for $W$ in the neighborhood of each fiber $V_t$, as per Definition \ref{normal bundle}.

Following the same approach as before, the transformation functions between overlapping charts $\mathscr{W}_i$ and $\mathscr{W}_k$ are given by:
\[
\begin{cases}
w^\lambda_i = f_{ik}^\lambda(w_k, z_k) & \text{for } \lambda = 1, 2, \dots, r, \\
z^\mu_i = g_{ik}^\mu(w_k, z_k) & \text{for } \mu = 1, 2, \dots, d,
\end{cases}
\]
where $w_k = (w_k^1, \dots, w_k^r)$ and $z_k = (z_k^1, \dots, z_k^d)$. Subsequently, we define the normal bundle $NV_t$ of each fiber $V_t$ in the analytic family $\pi: \mathcal{V} \rightarrow \mathbb{B}_\varepsilon^l$ as follows: 

\begin{definition}
\label{Normal bundle}
Define the normal bundle of $V_t$ as a holomorphic vector bundle $B_t$ of rank $r$ over $V_t$. This vector bundle is characterized by a set of bundle transition functions $B_{t,ik}$ associated with the open chart covering $V_t = \bigcup_{k \in I} \mathscr{V}_{k,t}$. The bundle transition functions $B_{t,ik}(z_k)$ are specified by:
\[
\left(\left.\frac{\partial \tilde w^\lambda_i}{\partial \tilde w_k^\mu}\right|_{\mathscr{V}_{i,t} \cap \mathscr{V}_{k,t}}\right)_{\lambda, \mu = 1, \dots, r} = \left(\left.\frac{\partial f_{ik}^\lambda(w_k, z_k)}{\partial w^\mu_k} - \frac{\partial \varphi_i^\lambda(z_k, t)}{\partial z_i^\alpha} \cdot \frac{\partial g^\alpha_{ik}(w_k, z_k)}{\partial w^\mu_k}\right|_{\mathscr{V}_{i,t} \cap \mathscr{V}_{k,t}}\right)_{\lambda, \mu = 1, \dots, r}.
\]
\end{definition}

Having established the normal bundle $B_t$ of each fiber $V_t$ in the family $\mathcal{V}$, we now proceed to generalize this concept to the entire family. This allows us to consider the variation of these normal bundles across compact complex submanifold in $\mathcal{V}$, leading to the definition of a holistic structure that encompasses whole $\mathcal{V}$.

\begin{definition}
Define the holomorphic function $B_{ik} \colon \mathcal{V} \cap \mathscr{U}_i \cap \mathscr{U}_k \rightarrow GL(r, \mathbb{C})$ such that $B_{ik}|_{V_t} = B_{t,ik}$. The holomorphic vector bundle $\mathcal{B}$ over $\mathcal{V}$ is then defined by the collection of these bundle transition functions $B_{ik}$, corresponding to the open chart covering $\mathcal{V} = \bigcup_{k \in I} (\mathscr{U}_i \cap \mathcal{V})$. The projection $\omega \colon \mathcal{B} \rightarrow \mathcal{V}$ denotes the \textit{complex analytic family of normal bundles on $\mathcal{V}$}.
\end{definition}

The family of normal bundles $\omega \colon \mathcal{B} \rightarrow \mathcal{V}$ constitutes a complex analytic family over the mapping $\pi \colon \mathcal{V} \rightarrow \mathbb{B}_\varepsilon^l$, as defined in Definition \ref{differentiable family of pi}. Furthermore, according to Theorem \ref{Principle of Upper Semi-continuity}, for any $t \in \mathbb{B}^l_{\varepsilon}$ sufficiently close to 0 and for any integer $q \geq 0$, the dimension of the cohomology group $H^q\left(V_{t}, B_{t}\right)$ is less than or equal to that of $H^q\left(V_0, B_0\right)$.

Now that we have described the normal bundles $NV_t$ associated with each fiber $V_t$ of the analytic family $\pi \colon \mathcal{V} \rightarrow \mathbb{B}_\varepsilon^l$, it is pertinent to delve into the mechanisms of their deformation. This leads us to explore the concept of infinitesimal deformation in the context of complex manifolds, a notion central to understanding the changes in compact complex submanifolds under small perturbations. Kodaira and Spencer's pioneering work \cite{kodaira1958deformations} provides a foundation for this exploration. For every fiber $V_t$ of $\pi \colon \mathcal{V} \rightarrow \mathbb{B}_\varepsilon^l$ and any tangent vector $\frac{\partial}{\partial t} := \gamma_\rho \frac{\partial}{\partial t^\rho}$ at $t$ in $N$, let $B_t$ denote the normal bundle of $V_t$, as defined in Definition \ref{Normal bundle}. According to Kodaira \cite{kodaira1962theorem}, the set of holomorphic functions $\{\Psi_{k,t} \colon \mathscr{V}_{k,t} \rightarrow \mathbb{C}^r\}_{k \in I}$, defined by 
\[
\Psi_{k,t}(z_k) = \left(\left.\frac{\partial \varphi^\lambda_k(z_k,t)}{\partial t}\right|_{\mathscr{V}_{k,t}}\right)_{\lambda = 1, \cdots, r},
\]
satisfies the equation $\Psi_{i,t}(z_i) = B_{t,ik} \cdot \Psi_{k,t}(z_k)$. Thus, $\{\Psi_{k,t}(z_k)\}_{k \in I}$ induces a holomorphic section of $B_t$ over the open cover $V_t = \bigcup_{k \in I} \mathscr{V}_{k,t}$.

\begin{definition}
The holomorphic section of $B_t$ induced by $\{\Psi_{k,t}(z_k)\}_{k \in I}$ is referred to as the \textit{infinitesimal deformation} of $V_t$ along the tangent vector $\frac{\partial}{\partial t} = \gamma_\rho \frac{\partial}{\partial t^\rho}$ at $t \in N$. The linear mapping 
\[
\sigma_t \colon T_tN \rightarrow H^0(V_t, B_t) \quad \text{given by} \quad \frac{\partial}{\partial t} \mapsto \{\Psi_{k,t}\}_{k \in I}
\]
is known as the \textit{Kodaira-Spencer map}.
\end{definition}

We now focus on the specific properties and deformations of a compact complex submanifold within such a manifold. In the context of a complex manifold $W$ of dimension $r + d$, consider $V$, a compact complex submanifold of dimension $d$. Utilizing the canonical local charts $(\mathscr{W}_k, w^1_k, \cdots, w_k^r, z^1_k, \cdots, z^d_k)_{k \in I}$ near $V$ as per Definition \ref{system of canonical local charts}, we denote $\mathscr{V}_k$ as the intersection $\mathscr{W}_k \cap V$. Assuming $\dim_\mathbb{C} H^0(V, NV) = l$ with a basis $\{\beta_1, \cdots, \beta_l\}$ for $H^0(V, NV)$, each $\beta_\rho$ in $\mathscr{V}_k \times \mathbb{C}^r$ is represented by 
\[
\beta_{\rho,k} = \Big(\beta_{\rho,k}^1, \cdots, \beta_{\rho,k}^r\Big)
\]
with $\beta^\lambda_{\rho,k}(z_k^1, \cdots, z_k^d)$ as holomorphic functions on $\mathscr{V}_k$. Kodaira's Theorem \cite[Theorem 1]{kodaira1962theorem} demonstrates the surjectivity of the Kodaira-Spencer map for $V$ under specific conditions on $NV$.

\begin{prop} 
\label{canonical local deformation}
Suppose $H^1(V, NV) = 0$. For a sufficiently small positive number $\varepsilon$, there exist holomorphic vector functions $\varphi_k \colon \mathscr{V}_k \times \mathbb{B}^l_\varepsilon \rightarrow \mathbb{C}^r$, defined as $\varphi_k(z_k, t) = (\varphi^1_k, \cdots, \varphi^r_k)$, satisfying these transition and boundary conditions:

Transition Conditions:
\[
\begin{cases}
\varphi_i^\lambda = f_{ik}^\lambda(\varphi^1_k, \cdots, \varphi^r_k, z^1_k, \cdots, z^d_k) & \quad \lambda = 1, \cdots, r, \\
z_i^{\mu} = g_{ik}^\mu(\varphi^1_k, \cdots, \varphi^r_k, z^1_k, \cdots, z^d_k) & \quad \mu = 1, \cdots, d,
\end{cases}
\]

Boundary Conditions:
\[
\begin{cases}
\varphi_k^\lambda(z^1_k, \cdots, z_k^d, 0) = 0 & \quad \lambda = 1, \cdots, r, \\
\left.\frac{\partial \varphi^\lambda_k}{\partial t^\rho}\right|_{t = 0} = \beta^\lambda_{\rho,k} & \quad \rho = 1, \cdots, l.
\end{cases}
\]
\end{prop}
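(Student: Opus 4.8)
The plan is to reproduce Kodaira's proof of \cite[Theorem 1]{kodaira1962theorem}: construct each $\varphi_k$ as a convergent power series in $t=(t^1,\dots,t^l)$, solving the transition equations formally order by order with the vanishing of $H^1(V,NV)$ removing the obstruction at each order, and then establishing convergence by the method of majorants.

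First, normalize the canonical charts so that $f_{ik}^\lambda(0,z_k)=0$ on every overlap --- this holds because $V\cap\mathscr{W}_i$ and $V\cap\mathscr{W}_k$ are cut out by $w_i=0$ and $w_k=0$ and the coordinate change carries the one onto the other --- and Taylor-expand $f_{ik}^\lambda(w_k,z_k)=\sum_\mu B_{ik}^{\lambda\mu}(z_k)\,w_k^\mu+(\text{terms of order}\geq2\text{ in }w_k)$, where $B_{ik}$ is exactly the transition matrix of $NV$. Write $\varphi_k^\lambda=\sum_{m\geq1}\varphi_{k,m}^\lambda$ with $\varphi_{k,m}^\lambda$ homogeneous of degree $m$ in $t$, and expand the coordinate change $z_i=g_{ik}(\varphi_k,z_k)$ the same way. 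Substituting into the transition relations and collecting the degree-$m$ part produces, for each $m\geq2$, a linear system of the shape
\[
\varphi_{i,m}^\lambda-\sum_\mu B_{ik}^{\lambda\mu}\,\varphi_{k,m}^\mu=F_{ik,m}^\lambda,
\]
where $F_{ik,m}^\lambda$ is an explicit universal polynomial in the Taylor coefficients of $f_{ik},g_{ik}$ and in the $\varphi_{k,m'}$ with $m'<m$ only. Viewing $\{\varphi_{k,m}\}_k$ as a $0$-cochain and $\{F_{ik,m}\}$ as a $1$-cochain of the covering $\{\mathscr{V}_k\}_{k\in I}$ with values in the locally free sheaf $NV\otimes\mathrm{Sym}^m(\mathbb{C}^l)^*$ (the $\mathrm{Sym}^m(\mathbb{C}^l)^*$ factor bookkeeping the monomials in $t$), the system reads $\{F_{ik,m}\}=\delta\{\varphi_{k,m}\}$ for the \v{C}ech coboundary $\delta$. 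The case $m=1$ is the initial datum: put $\varphi_{k,1}^\lambda=\sum_\rho\beta_{\rho,k}^\lambda\,t^\rho$, which is compatible across charts precisely because each $\beta_\rho\in H^0(V,NV)$ satisfies $\beta_{\rho,i}=B_{ik}\,\beta_{\rho,k}$; this choice simultaneously records the boundary conditions $\varphi_k(\cdot,0)=0$ and $\partial\varphi_k^\lambda/\partial t^\rho|_{t=0}=\beta_{\rho,k}^\lambda$.

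For the inductive step, suppose $\varphi_{k,1},\dots,\varphi_{k,m-1}$ have been chosen so that the transition equations hold modulo terms of degree $\geq m$. One checks that $\{F_{ik,m}\}$ is a $1$-cocycle, the cocycle identity on triple overlaps being forced by the cocycle relations among the coordinate transitions $\{f_{ik},g_{ik}\}$ of $W$ together with the already established equations of order $<m$. Since $H^1(V,NV)=0$ and $\mathrm{Sym}^m(\mathbb{C}^l)^*$ is a fixed finite-dimensional vector space, also $H^1\!\left(V,NV\otimes\mathrm{Sym}^m(\mathbb{C}^l)^*\right)=0$, so the cocycle is a coboundary and $\varphi_{k,m}$ exists. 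For the convergence estimate one does not take an arbitrary such $\varphi_{k,m}$ but the distinguished solution furnished by Hodge theory: fixing Hermitian metrics on $V$ and on $NV$, the associated Green operator yields a \emph{bounded} right inverse to $\delta$, giving $\|\varphi_{\cdot,m}\|\leq C\,\|F_{\cdot,m}\|$ in a fixed H\"older norm on $V$ with $C$ independent of $m$.

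The main obstacle is the convergence. Fix the classical majorant
\[
A(t)=\frac{b}{c}\sum_{m\geq1}\frac{\bigl(c\,(t^1+\cdots+t^l)\bigr)^m}{m^2},
\]
which enjoys a self-majorizing inequality $A(t)^2\ll (\text{const})\,\frac{b}{c}\,A(t)$, and prove by induction that each $\varphi_{k,m}^\lambda$ is dominated coefficientwise by the degree-$m$ part of $A$. This uses: (i) holomorphy of $f_{ik},g_{ik}$ on fixed compact polydiscs, which majorizes their Taylor coefficients by a geometric series; (ii) the structure of $F_{ik,m}$ as a sum of products of lower-order $\varphi$'s, so that $\sum_m F_{ik,m}$ is dominated by a fixed polynomial expression in $A$; and (iii) the uniform H\"older estimate of the previous paragraph, which transports the majorization from $\{F_{\cdot,m}\}$ back to $\{\varphi_{\cdot,m}\}$ at the cost of the fixed factor $C$. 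Choosing $b$ small and $c$ large closes the induction, so the series for $\varphi_k$ converges on $\mathscr{V}_k\times\mathbb{B}^l_\varepsilon$ for $\varepsilon$ small; the resulting holomorphic $\varphi_k$ satisfy all the stated transition and boundary conditions, and $\mathcal{V}:=\bigcup_{k\in I}\{(z_k,w_k,t):w_k=\varphi_k(z_k,t)\}$ is the sought analytic family. I expect the genuine difficulty to lie in items (ii)--(iii): tracking the combinatorics of the nonlinear terms and matching them against the majorant while absorbing the operator-norm loss $C$; the rest of the argument is formal once the bounded solving operator is in hand.
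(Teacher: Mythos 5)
The paper offers no proof of this proposition at all --- it simply cites Kodaira's Theorem~1 from \cite{kodaira1962theorem} --- and your proposal is a faithful and correct reconstruction of exactly that cited argument: the formal power-series solution of the transition equations order by order in $t$, with $H^1(V,NV)=0$ killing the obstruction cocycle $\{F_{ik,m}\}$ at each step, the first-order term encoding the prescribed basis $\{\beta_\rho\}$ and hence the boundary conditions, and convergence secured by Kodaira's majorant together with the uniformly bounded (harmonic-theoretic) solving operator for $\delta$. So the approach is essentially the same as the paper's, and no gap needs to be flagged.
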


\begin{remark}
The boundary condition implies that the function system
\[
\left\{\left(\left.\frac{\partial \varphi_k^\lambda}{\partial t^\rho}\right|_{t=0}\right)_{\lambda=1,\cdots,r} \colon \mathscr{V}_k \rightarrow \mathbb{C}^r\right\}_{k \in I}
\]
induces a holomorphic section of $H^0(V, NV)$, exactly corresponding to $\beta_\rho$ for each $\rho = 1, \cdots, l$.
\end{remark}

Let $V_t$ be defined near $V$ as 
\[
V_t \cap \mathscr{W}_k = \Big\{w_k^\lambda = \varphi^\lambda_k(z_k) \mid \lambda = 1, \cdots, r\Big\}
\]
in $\mathscr{W}_k$, where $\varphi_k$ are the holomorphic functions defined in Definition \ref{Normal bundle}. The transition condition ensures that $V_t$ forms a submanifold of $W$ for each $t \in \mathbb{B}_\varepsilon^l$. Defining $\mathcal{V} = \bigsqcup_{t \in \mathbb{B}^l_\varepsilon} V_t$ as a submanifold of $W \times \mathbb{B}^l_\varepsilon$, we obtain an analytic family $\pi^V \colon \mathcal{V} \rightarrow \mathbb{B}_\varepsilon^l$, consisting of compact submanifolds of $W$. This family has the properties: $V_0 = V$, $B_0 = NV$, and the Kodaira-Spencer map at $t = 0$ is an isomorphism, thereby constituting the \textit{canonical local deformation of $V$}.

\begin{figure}[htbp]
    \centering
    \includegraphics[width=0.8\textwidth]{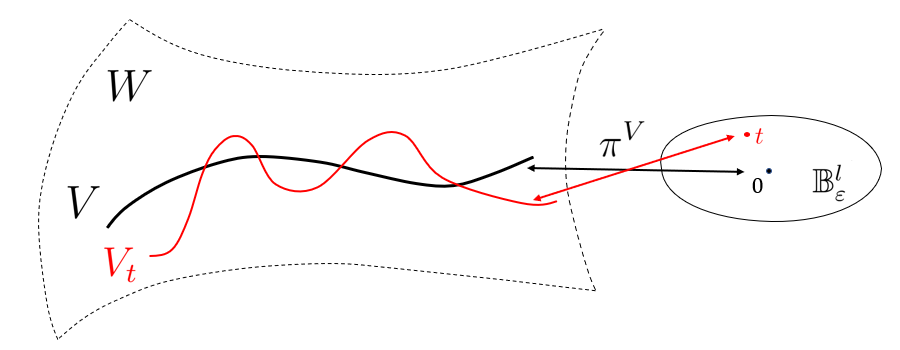}
    \caption{Canonical local deformation}
\end{figure}

Building upon the foundational concepts we have discussed, Kodaira further extends these ideas in \cite{kodaira1962theorem} by presenting a more comprehensive theorem. This theorem not only encompasses the earlier results but also provides deeper insights into the structure of analytic families of compact complex submanifolds.

\begin{theorem}
\label{main Kod1}
If $H^1(V, NV) = 0$ and $\dim_\mathbb{C}H^0(V, NV)=l$, then there exists an analytic family $\pi \colon \mathcal{V} \rightarrow \mathbb{B}_\varepsilon^l$ of compact complex submanifolds of $W$ such that $V_0 = V$. This family $\pi \colon \mathcal{V} \rightarrow \mathbb{B}_\varepsilon^l$ satisfies the property that the Kodaira-Spencer map $\sigma_t \colon T_t\mathbb{B}_\varepsilon^l \rightarrow H^0(V, B_t)$ is an isomorphism for all $t \in \mathbb{B}^l_\varepsilon$. Additionally, at each point $t \in \mathbb{B}^l_\varepsilon$, the family is maximal.
\end{theorem}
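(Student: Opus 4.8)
The plan is to derive the theorem from Proposition \ref{canonical local deformation}, which already packages the analytic heart of the argument, and then to check the three assertions in turn: that the resulting $\pi\colon\mathcal V\to\mathbb B^l_\varepsilon$ really is an analytic family of compact submanifolds in the sense of Definition \ref{analytic family of compact submanifolds}, that its Kodaira--Spencer map is an isomorphism at \emph{every} parameter value, and that the family is maximal at each point.

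Before that, I would recall how Proposition \ref{canonical local deformation} is proved, since this is where the real work lies. Fix a basis $\{\beta_1,\dots,\beta_l\}$ of $H^0(V,NV)$, write each unknown vector function as a formal power series $\varphi_k(z_k,t)=\sum_{|\nu|\ge 1}\varphi_{k,\nu}(z_k)\,t^\nu$ in $t=(t^1,\dots,t^l)$ with no constant term (forcing the first boundary condition), and substitute into the transition equations $\varphi_i^\lambda=f_{ik}^\lambda(\varphi_k,z_k)$, $z_i^\mu=g_{ik}^\mu(\varphi_k,z_k)$. Collecting the coefficient of $t^\nu$ gives, at order one, $\varphi_{k,e_\rho}=\beta_{\rho,k}$ (this is the second boundary condition and the only place the chosen basis enters), and at order $m\ge 2$ a linear system asserting that $\{\varphi_{i,\nu}-B_{0,ik}\,\varphi_{k,\nu}\}$ equals a \v{C}ech $1$-cocycle valued in $NV$ whose entries are universal polynomials in the already-determined lower-order coefficients. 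Since $H^1(V,NV)=0$, every such cocycle is a coboundary, so the $\varphi_{k,\nu}$ are solved for inductively and a formal solution exists. The genuinely hard step, and the one I expect to be the main obstacle, is convergence of this formal series for $\|t\|_\infty<\varepsilon$: I would fix once and for all a bounded right inverse to the coboundary map (equivalently a H\"older-bounded right inverse of $\bar\partial$ on $(0,1)$-forms valued in $NV$), derive from it a uniform estimate $\|\varphi_{k,\nu}\|\le a_\nu$ where $\sum a_\nu t^\nu$ is an explicit majorant series of positive radius, and then conclude by the standard convergence lemma of the Kodaira--Nirenberg--Spencer majorant method.

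Granting Proposition \ref{canonical local deformation}, set $V_t\cap\mathscr W_k=\{w_k^\lambda=\varphi_k^\lambda(z_k,t):\lambda=1,\dots,r\}$ and $\mathcal V=\bigsqcup_{t\in\mathbb B^l_\varepsilon}V_t\subset W\times\mathbb B^l_\varepsilon$. The transition conditions say exactly that the descriptions of $V_t$ in overlapping canonical charts $\mathscr W_i,\mathscr W_k$ agree, so $\mathcal V$ is locally the common zero set of $w_k^\lambda-\varphi_k^\lambda(z_k,t)$, $\lambda=1,\dots,r$, whose Jacobian in the $w_k$-variables is the identity; hence $\mathcal V$ is a complex submanifold of $W\times\mathbb B^l_\varepsilon$ of codimension $r$ satisfying clauses (1) and (4) of Definition \ref{analytic family of compact submanifolds}, while $\pi=j_2|_{\mathcal V}$ gives clause (2). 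For clause (3), each $V_t$ is a submanifold of $W\times\{t\}$, and for $\varepsilon$ small it is compact and connected because it is a small deformation of $V_0=V$ (the implicit-function/Ehresmann argument already used after Definition \ref{analytic family of compact submanifolds} applies). Thus $\pi\colon\mathcal V\to\mathbb B^l_\varepsilon$ is an analytic family of compact complex submanifolds of $W$ with $V_0=V$ and normal bundle $B_0=NV$.

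It remains to treat the Kodaira--Spencer map and maximality. By construction $\sigma_t(\partial/\partial t^\rho)$ is represented by $\{(\partial\varphi_k^\lambda/\partial t^\rho|_{\mathscr V_{k,t}})_{\lambda=1,\dots,r}\}_{k\in I}$, so at $t=0$ the second boundary condition gives $\sigma_0(\partial/\partial t^\rho)=\beta_\rho$; since $\dim_{\mathbb C}T_0\mathbb B^l_\varepsilon=l=\dim_{\mathbb C}H^0(V,NV)$ and $\{\beta_\rho\}$ is a basis, $\sigma_0$ is an isomorphism. The entries of $\sigma_t$ in a local holomorphic frame depend holomorphically on $t$, so $\{t:\operatorname{rank}\sigma_t=l\}$ is open and nonempty; after shrinking $\varepsilon$, $\sigma_t$ is injective for all $t$. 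On the other hand $\omega\colon\mathcal B\to\mathcal V$ is a complex analytic family of vector bundles over $\pi$, so the Principle of Upper Semi-continuity (Theorem \ref{Principle of Upper Semi-continuity}) yields, for $\varepsilon$ small, $\dim H^1(V_t,B_t)\le\dim H^1(V_0,B_0)=0$ and $\dim H^0(V_t,B_t)\le\dim H^0(V_0,B_0)=l$. An injection from an $l$-dimensional space into a space of dimension at most $l$ is an isomorphism, so $\sigma_t$ is an isomorphism for every $t\in\mathbb B^l_\varepsilon$. Finally, since $\sigma_t$ is surjective and $H^1(V_t,B_t)=0$, Kodaira's completeness theorem \cite{kodaira1962theorem} shows that every analytic family of compact submanifolds of $W$ having $V_t$ as a fiber is, near the relevant parameter, induced from $\pi\colon\mathcal V\to\mathbb B^l_\varepsilon$ by a holomorphic map; together with injectivity of $\sigma_t$ this gives maximality of the family at each $t$, completing the proof.
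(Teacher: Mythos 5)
The paper does not prove Theorem \ref{main Kod1} at all: it is quoted verbatim from Kodaira \cite{kodaira1962theorem}, so there is no in-paper argument to compare against. Your proposal is a correct reconstruction of Kodaira's original proof, and it uses exactly the ingredients the paper has already set up (Proposition \ref{canonical local deformation} for the existence of the family, the boundary conditions for $\sigma_0$ being an isomorphism, and Theorem \ref{Principle of Upper Semi-continuity} to propagate $H^1(V_t,B_t)=0$ and $\dim H^0(V_t,B_t)\le l$ to nearby fibers). The one step stated a bit loosely is the injectivity of $\sigma_t$ for $t\neq 0$: since the target $H^0(V_t,B_t)$ varies with $t$, "the rank of $\sigma_t$ in a holomorphic frame" is not literally well defined; the clean version is that for $\gamma$ in the unit sphere of $\mathbb{C}^l$ the sections $\sum_\rho\gamma^\rho\,\partial\varphi_k/\partial t^\rho$ depend continuously on $(\gamma,t)$ and are nonvanishing as sections at $t=0$, so by compactness of the sphere they remain nonzero for $t$ small. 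With that repair, and granting the completeness theorem of \cite{kodaira1962theorem} for the maximality clause (legitimate, since the paper itself cites the whole statement), your argument is sound.
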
 

The notion of 'maximal' in this context is further elaborated in the following definition.

\begin{definition}
\label{maximal}
An analytic family $\pi \colon \mathcal{V} \rightarrow N$ of compact submanifolds $V_t$, $t \in N$, of $W$ is said to be maximal at a point $t_0$ of $N$ if and only if, for any analytic family $\pi^\prime \colon \mathcal{V}^\prime \rightarrow N^\prime$ of compact submanifolds $V^\prime_s$, $s \in N^\prime$, of $W$ such that $V^\prime_{s_0} = V_{t_0}$ for a point $s_0 \in N$, there exists a neighborhood $U^\prime$ of $s_0$ in $N^\prime$ and a holomorphic map $h \colon U^\prime \rightarrow N$ sending $s_0$ into $t_0$ such that $V^\prime_s = V_{h(s)}$ for all $s \in U^\prime$, where $V^\prime_s = V_t$ indicates that $V^\prime_s$ and $V_t$ are the same submanifold of $W$.
\end{definition}

\begin{remark}
It is crucial to emphasize that the analytic family $\pi \colon \mathcal{V} \rightarrow \mathbb{B}^l_\varepsilon$, corresponding to the canonical local deformation of $V$ as defined in Proposition \ref{canonical local deformation}, fully satisfies the conditions stated in Theorem \ref{main Kod1} when $\varepsilon$ is sufficiently small.
\end{remark}

Let $X$ be a compact complex submanifold of $W$ such that $H^1(X, NX) = 0$. In line with the conditions outlined in Theorem \ref{main Kod1}, we utilize the concept of canonical local deformation as defined in Proposition \ref{canonical local deformation}. This approach facilitates the exploration of deformations of $X$ within the manifold $W$, providing a framework to understand how $X$ can be varied or transformed while remaining within $W$. To formalize this concept, we introduce the notion of a regular deformation space:

\begin{definition}
\label{regular deformation space}
Let $X$ be a compact submanifold of $W$ of dimension $d$. A compact complex submanifold $V$ of $W$ is called a \textit{deformation} of $X$ in $W$ if there exist analytic families $\pi_k \colon \mathcal{V}_k \to N_k$, $k = 0, \cdots, n-1$, with the following properties: $X = \pi_0^{-1}(s_0)$, $\pi_{k+1}^{-1}(s_{k+1}) = \pi_k^{-1}(t_k)$, and $V = \pi_{n-1}^{-1}(t_{n-1})$ for some $s_k, t_k \in N_k$. The set of all such deformations of $X$ in $W$ is denoted as $\mathscr{M}_X$.

Assuming $H^1(X, NX) = 0$, the \textit{regular deformation space} of $X$ in $W$, denoted as $\widetilde{\mathscr{M}_X}$, is defined as the subset of $\mathscr{M}_X$ consisting of those deformations $V$ satisfying $H^1(V, NV) = 0$.
\[
\widetilde{\mathscr{M}_X} = \Big\{ V \in \mathscr{M}_X \Big| H^1(V, NV) = 0 \Big\}.
\]
\end{definition}

\begin{figure}[htbp]
    \centering
    \includegraphics[width=0.7\textwidth]{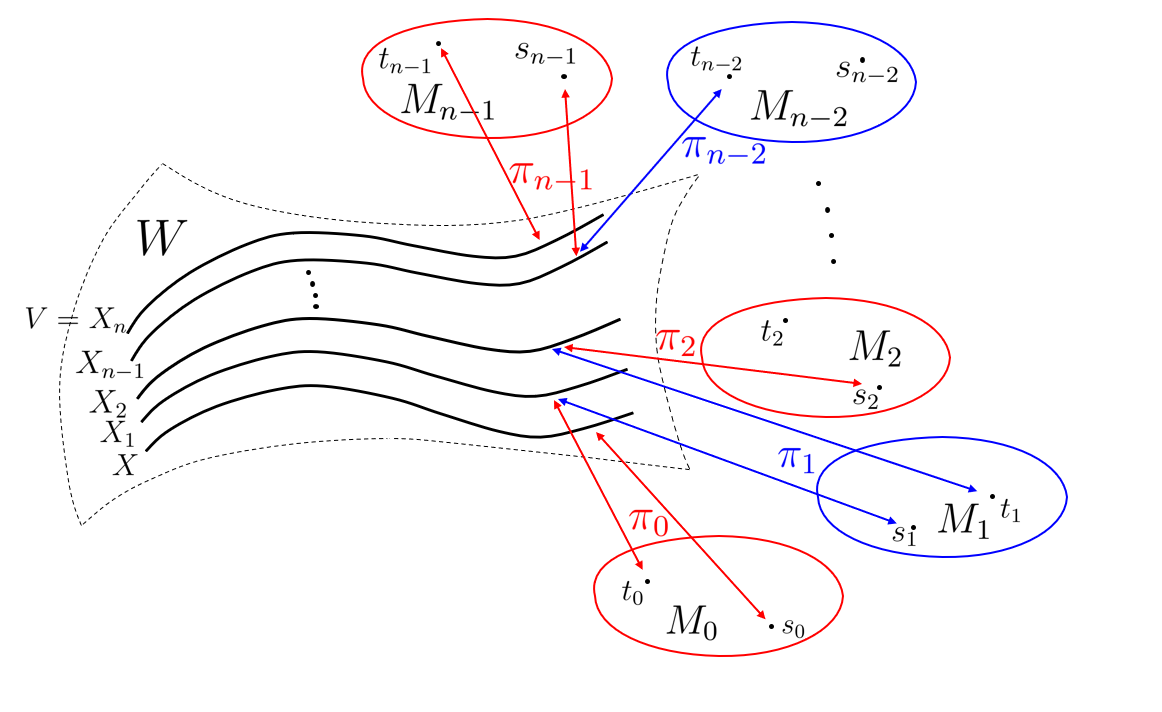}
    \caption{Deformation space}
\end{figure}

\begin{lemma}
\label{topo}
Let $X$ be a compact complex submanifold of $W$ and $H^1(X, NX) = 0$. Then $\widetilde{\mathscr{M}_X}$ has a $C_1$ and $T_2$ topology.
\end{lemma}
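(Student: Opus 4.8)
The plan is to establish, in turn, that the declared basic neighborhoods genuinely form a basis for a topology on $\widetilde{\mathscr{M}_X}$, that this topology is first countable ($C_1$), and that it is Hausdorff ($T_2$). Fix notation: for $V\in\widetilde{\mathscr{M}_X}$ write $l(V):=\dim_{\mathbb C}H^0(V,NV)$, and call any analytic family $\pi\colon\mathcal V\to\mathbb B^{l(V)}_\varepsilon$ with $V_0=V$ realizing the conclusion of Theorem \ref{main Kod1} a canonical local deformation of $V$; these exist by Theorem \ref{main Kod1}. Two preliminary facts drive everything. \emph{First}, after shrinking $\varepsilon$ (using that $t\mapsto\dim H^1(V_t,NV_t)$ is upper semicontinuous by Theorem \ref{Principle of Upper Semi-continuity}, together with the fact that every $\sigma_t$ is an isomorphism, so $\dim H^0(V_t,NV_t)=l(V)$), every fiber $V_t$ of a canonical local deformation of $V$ satisfies $H^1(V_t,NV_t)=0$ and $\dim H^0(V_t,NV_t)=l(V)$; appending the family to the chain of analytic families exhibiting $V$ as a deformation of $X$ shows $V_t\in\widetilde{\mathscr{M}_X}$, and restricting the family to a subpolydisc recentered at any $t'$ is again a canonical local deformation, this time of $V_{t'}$, since maximality and the Kodaira--Spencer isomorphism are inherited under restriction to an open subset. \emph{Second}, if $\pi_1,\pi_2$ are canonical local deformations of the same $V$, maximality of $\pi_1$ yields a holomorphic $h$ with $h(0)=0$ and $V_{2,s}=V_{1,h(s)}$ near $0$, and since $h$ intertwines the two Kodaira--Spencer maps, which are isomorphisms, $dh_0$ is invertible, so $h$ is biholomorphic near $0$. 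With these, the neighborhood-basis axioms follow at once: $V$ lies in each of its basic neighborhoods; given two of them, restricting $\pi_2$ to a small polydisc on which $h$ lands inside the base of $\pi_1$ produces a basic neighborhood of $V$ contained in both; and if $V'$ lies in a basic neighborhood of $V$, say $V'=V_{t'}$, then restricting that family near $t'$ gives a basic neighborhood of $V'$ inside the original. Hence $\mathscr{S}_X$ is a well-defined topology.

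For first countability, fix $V$ and one canonical local deformation $\pi\colon\mathcal V\to\mathbb B^{l(V)}_\varepsilon$, and set $U_n:=\{V_t:\|t\|_\infty<\varepsilon/n\}$. Each $U_n$ is a basic neighborhood of $V$ by the first preliminary fact. If $U'$ is any basic neighborhood of $V$, coming from some canonical local deformation $\pi'$ with base $\mathbb B^{l(V)}_{\varepsilon'}$, the second preliminary fact gives a holomorphic $h$, defined near $0$ with $h(0)=0$, carrying the base of $\pi$ into that of $\pi'$ and matching fibers; for $n$ large, $\mathbb B^{l(V)}_{\varepsilon/n}$ lies in the domain of $h$, so every member of $U_n$ is a fiber of $\pi'$, i.e. $U_n\subseteq U'$. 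Thus $\{U_n\}_{n\ge1}$ is a countable neighborhood basis at $V$, and $\mathscr{S}_X$ is $C_1$.

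For the Hausdorff property, let $V\neq V'$ in $\widetilde{\mathscr{M}_X}$. As $V$ and $V'$ are connected compact complex submanifolds of $W$ of the same dimension $d$, neither is contained in the other, so after interchanging them there is $p\in V\setminus V'$. Fix a distance function $\rho$ on $W$ compatible with its topology, a coordinate ball $O\ni p$ with $\overline O$ compact and $\overline O\cap V'=\emptyset$, and put $\eta:=\rho(\overline O,V')>0$. Suppose $V$ and $V'$ could not be separated; intersecting, for each $n$, the shrinking neighborhoods $U_n$ of $V$ and $U'_n$ of $V'$ from the previous step would then produce a single submanifold $V_{t_n}=V'_{s_n}$ with $\|t_n\|_\infty\to0$ and $\|s_n\|_\infty\to0$, where $V_t$ and $V'_s$ are the fibers of fixed canonical local deformations of $V$ and $V'$. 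In the canonical local charts near $V$ the submanifold $V_{t_n}$ is a graph $w_k=\varphi_k(z_k,t_n)$ with $\varphi_k(\cdot,t_n)\to\varphi_k(\cdot,0)\equiv0$ uniformly, so the point of $V_{t_n}$ over the $z_k$-coordinate of $p$ converges to $p$, whence $V_{t_n}\cap O\neq\emptyset$ for $n$ large. Applying the same uniform convergence near $V'$ shows the Hausdorff $\rho$-distance from $V'_{s_n}$ to $V'$ is below $\eta$ for $n$ large, so $V'_{s_n}$ is disjoint from $\overline O$ and hence from $O$; this contradicts $V_{t_n}=V'_{s_n}$. Therefore $V$ and $V'$ admit disjoint neighborhoods and $\mathscr{S}_X$ is $T_2$.

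The main obstacle is this last step: the topology axioms and first countability are essentially formal once one knows canonical local deformations of a fixed submanifold are unique up to reparametrization (maximality together with the Kodaira--Spencer isomorphism). Separating two distinct deformations, by contrast, needs the additional input that $\mathscr{S}_X$-convergence forces convergence of the underlying subsets of $W$ in the Hausdorff distance — equivalently, that assigning to a deformation its underlying compact subset of $W$ is a continuous injection into the Hausdorff space of compact subsets of $W$ — and that is exactly what the uniform estimate $\varphi_k(\cdot,t)\to0$ furnishes.
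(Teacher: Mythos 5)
Your proposal is correct and follows essentially the same route as the paper: canonical local deformations furnish the basic neighborhoods, upper semicontinuity of $\dim H^1$ keeps the fibers in $\widetilde{\mathscr{M}_X}$, maximality (plus the Kodaira--Spencer isomorphism) gives the refinement property and first countability, and the graph representation $w_k=\varphi_k(z_k,t)$ with $\varphi_k(\cdot,0)\equiv 0$ supplies the Hausdorff separation. Your $T_2$ step is phrased as a sequential contradiction via one-sided Hausdorff distance rather than the paper's direct construction of disjoint chart systems around $y\in Y\setminus Z$ and $z\in Z\setminus Y$, but the geometric input is identical (and you justify the existence of $p\in V\setminus V'$ more carefully, via connectedness and equal dimension).
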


\begin{proof} 
In the regular deformation space $\widetilde{\mathscr{M}_X}$, each manifold $V$ is associated with an analytic family $\pi^V \colon \mathcal{V} \rightarrow \mathbb{B}^l_\varepsilon$, comprising compact submanifolds of a fixed manifold $W$. For a suitably small $\varepsilon$, $\pi^V$ adheres to the criteria:
\begin{enumerate}
    \item $V_0$, the fiber over the origin in $\mathbb{B}^l_\varepsilon$, equals $V$.
    \item For every $t \in \mathbb{B}^l_\varepsilon$, the Kodaira-Spencer map $\sigma_t$ related to $\pi^V$ is an isomorphism.
    \item $\pi^V$ is maximal at each point $t \in \mathbb{B}^l_\varepsilon$ (refer to Definition \ref{maximal}).
\end{enumerate}
This family stems from $V$'s canonical local deformation. Let $l := \dim H^0(V, NV)$ and $V_t := \left(\pi^V\right)^{-1}(t)$ represent the fiber for each $t \in \mathbb{B}^l_\varepsilon$. With a sufficiently minimized $\varepsilon$, we deduce:
\[
\dim H^1(V_t, NV_t) \leq \dim H^1(V, NV) = 0,
\]
in line with the Principle of Upper Semi-continuity (refer to Theorem \ref{Principle of Upper Semi-continuity}). This connotes $\dim H^1(V_t, NV_t) = 0$, confirming that $V_t$ is included in $\widetilde{\mathscr{M}_X}$.

Let $U_{\pi^V}\left(\frac{1}{n}\right) = \left\{ V_t = (\pi^V)^{-1}(t) \mid t \in \mathbb{B}^l_{\frac{1}{n}} \right\}$ denote neighborhoods of $V$ in $\widetilde{\mathscr{M}_X}$, where $n$ is chosen to be sufficiently large. Consider any $Y, Z \in \widetilde{\mathscr{M}_X}$ such that the intersection $U_{\pi^Y}\left(\frac{1}{n_Y}\right) \cap U_{\pi^Z}\left(\frac{1}{n_Z}\right)$ is non-empty. For a manifold $V$ in this intersection, i.e., $V \in U_{\pi^Y}\left(\frac{1}{n_Y}\right) \cap U_{\pi^Z}\left(\frac{1}{n_Z}\right)$, it follows that $U_{\pi^V}(\varepsilon) \subset U_{\pi^Y}\left(\frac{1}{n_Y}\right) \cap U_{\pi^Z}\left(\frac{1}{n_Z}\right)$ for a sufficiently small $\varepsilon$. This inclusion holds because both $\pi^Y$ and $\pi^Z$ are maximal at each point in their domains. It is readily observed that
\[
\bigcup_{V \in \widetilde{\mathscr{M}_X}, n \gg 1} U_{\pi^V}\left(\frac{1}{n}\right) = \widetilde{\mathscr{M}_X}.
\]
Therefore, the collection $\left\{U_{\pi^V}\left(\frac{1}{n}\right)\right\}_{n \gg 1}$ forms a countable basis for the topology at each point $V$ in $\widetilde{\mathscr{M}_X}$, suggesting that $\widetilde{\mathscr{M}_X}$ is endowed with a $C_1$ topology. 

To establish the $T_2$ (Hausdorff) property in the regular deformation space $\widetilde{\mathscr{M}_X}$, a more detailed examination of the canonical local deformations is necessary. Consider distinct manifolds $Y$ and $Z$ in $\widetilde{\mathscr{M}_X}$, meaning that $Y$ and $Z$ are separate submanifolds of $W$. Since both $Y$ and $Z$ are compact submanifolds, there exist points $y \in Y \setminus Z$ and $z \in Z \setminus Y$ in $W$. We select canonical local charts for $W$ around $Y$ and $Z$ as follows:
\begin{align*}
&\Big\{\big(\mathscr{W}_{Y,k}; w_{Y,k}^1, \ldots, w_{Y,k}^r, z_{Y,k}^1, \ldots, z_{Y,k}^d\big)\Big\}_{k \in I},\\
&\Big\{\big(\mathscr{W}_{Z,k}; w_{Z,k}^1, \ldots, w_{Z,k}^r, z_{Z,k}^1, \ldots, z_{Z,k}^d\big)\Big\}_{k \in I}.
\end{align*}
The corresponding restricted local holomorphic charts on $Y$ and $Z$ are 
\[
\Big\{\big(\mathscr{Y}_k := \mathscr{W}_{Y,k}; z_{Y,k}^1, \ldots, z_{Y,k}^d\big)\Big\}_{k \in I},\quad \Big\{\big(\mathscr{Z}_k := \mathscr{W}_{Z,k}; z_{Z,k}^1, \ldots, z_{Z,k}^d\big)\Big\}_{k \in I}.
\]
The canonical local deformations are defined by holomorphic function systems $\varphi_k:\mathscr{Y}_k \times \mathbb{B}^l_{\varepsilon_Y} \to \mathbb{C}^r$ and $\psi_k:\mathscr{Z}_k \times \mathbb{B}^{l^\prime}_{\varepsilon_Z} \to \mathbb{C}^r$, such that
\[
Y_t := \pi^{-1}(t) = \{ w_{Y,k}^\lambda = \varphi_k^\lambda(z_{Y,k}, t) \}, \quad Z_t := \pi^{-1}(t) = \{ w_{Z,k}^\lambda = \psi_k^\lambda(z_{Z,k}, t) \}
\]
for each $k \in I$ and $\lambda = 1, \ldots, r$. Here, $l = \operatorname{Dim} H^0(Y, NY)$ and $l^\prime = \operatorname{Dim} H^0(Z, NZ)$.

Assuming $\mathscr{W}_{Y,0}$ and $\mathscr{W}_{Z,0}$ are centered at $y$ and $z$, respectively, we can shrink each $\mathscr{W}_{Y,k}$ and $\mathscr{W}_{Z,k}$, and adjust $\varepsilon_Y$ and $\varepsilon_Z$ as needed. Without loss of generality, assume 
\[
\mathscr{W}_{Y,0} \cap \left( \bigcup_{k \in I} \mathscr{W}_{Z,k} \right) = \mathscr{W}_{Z,0} \cap \left( \bigcup_{k \in I} \mathscr{W}_{Y,k} \right) = \emptyset,
\]
in accordance with $W$ being a $T_3$ topology space. For any $t \in \mathbb{B}^l_{\varepsilon_Y}$ and $t^\prime \in \mathbb{B}^{l^\prime}_{\varepsilon_Z}$, we have $Y_t := \left(\pi^Y\right)^{-1}(t) \neq Z_{t^\prime} := \left(\pi^Z\right)^{-1}(t^\prime)$, demonstrated by
\[
\begin{cases}
Y_t \cap \mathscr{W}_{Y,0} \neq \emptyset, & Y_t \cap \mathscr{W}_{Z,0} = \emptyset, \\
Z_{t^\prime} \cap \mathscr{W}_{Y,0} = \emptyset, & Z_{t^\prime} \cap \mathscr{W}_{Z,0} \neq \emptyset.
\end{cases}
\]
Hence, the open neighborhoods $U_{\pi^Y}(\varepsilon_Y)$ and $U_{\pi^Z}(\varepsilon_Z)$ of $Y$ and $Z$ within $\widetilde{\mathscr{M}_X}$ are disjoint.
\end{proof}

The topology previously defined on the space $\widetilde{\mathscr{M}_X}$ is hereinafter denoted as $\mathscr{S}_X$. We consider the topological space $\left(\widetilde{\mathscr{M}_X}, \mathscr{S}_X\right)$ and assume that it satisfies the $C_2$ property. Furthermore, we define $\widetilde{\mathscr{M}^l_X}$ as the subset of $\widetilde{\mathscr{M}_X}$ consisting of elements $V$ for which $\operatorname{Dim} H^0(V,NV)=l$. It is noteworthy that $\widetilde{\mathscr{M}^l_X}$ constitutes a open subset in the space $\left(\widetilde{\mathscr{M}_X}, \mathscr{S}_X\right)$.

\begin{theorem}
\label{complex regular deformation space}
Given that $\widetilde{\mathscr{M}^l_X}$ is non-empty, the pair $(\widetilde{\mathscr{M}^l_X}, \mathscr{S}_X)$ manifests as a complex manifold with dimension $l$, for each $l \in \mathbb{Z}_{\geq0}$.
\end{theorem}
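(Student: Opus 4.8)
The plan is to build the complex structure on $\widetilde{\mathscr{M}^l_X}$ by using the canonical local deformations from Proposition \ref{canonical local deformation} as holomorphic charts, and then to check that the transition maps between overlapping charts are biholomorphic, with the charts themselves being homeomorphisms onto open subsets of $\mathbb{B}^l_\varepsilon \subset \mathbb{C}^l$. So the first step is, for each $V \in \widetilde{\mathscr{M}^l_X}$, to take the canonical local deformation $\pi^V \colon \mathcal{V} \to \mathbb{B}^l_\varepsilon$ guaranteed by Proposition \ref{canonical local deformation} and Theorem \ref{main Kod1} (legitimate since $H^1(V,NV)=0$ and $\dim H^0(V,NV)=l$), and to declare the assignment $t \mapsto V_t = (\pi^V)^{-1}(t)$ to be a coordinate chart $\Phi_V \colon U_{\pi^V}(\varepsilon) \to \mathbb{B}^l_\varepsilon$. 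By the definition of the topology $\mathscr{S}_X$ in Lemma \ref{topo}, the sets $U_{\pi^V}(\varepsilon)$ are exactly the basic open neighborhoods, so $\Phi_V$ is a homeomorphism onto $\mathbb{B}^l_\varepsilon$ by construction — I would note that injectivity follows because distinct $t$ give genuinely distinct submanifolds of $W$ (the argument in the Hausdorff part of Lemma \ref{topo} shows how to separate them via canonical charts), and continuity both ways is immediate from how the basis is defined. One should also observe that since $\widetilde{\mathscr{M}^l_X}$ is an open subset of $\widetilde{\mathscr{M}_X}$ and, by upper semicontinuity, nearby fibers $V_t$ of a canonical local deformation of $V \in \widetilde{\mathscr{M}^l_X}$ satisfy $H^0(V_t,NV_t) = l$ for $\varepsilon$ small (the dimension can only drop, but a drop would contradict that $\sigma_t$ is an isomorphism onto $H^0(V,B_t)$ which has dimension $l$), the charts really do map into $\mathbb{B}^l_\varepsilon$ and cover $\widetilde{\mathscr{M}^l_X}$.

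The second and central step is the compatibility of charts. Given $V, V' \in \widetilde{\mathscr{M}^l_X}$ with $U_{\pi^V}(\varepsilon) \cap U_{\pi^{V'}}(\varepsilon') \neq \emptyset$, pick $V'' $ in the overlap; by maximality (Theorem \ref{main Kod1}, Definition \ref{maximal}) of the canonical local deformation $\pi^{V''}$, and the fact that $\pi^V$ and $\pi^{V'}$ are themselves analytic families restricting near $V''$, there are holomorphic maps relating the parameter spaces: maximality of $\pi^V$ at the point corresponding to $V''$ gives a holomorphic $h \colon U' \to \mathbb{B}^l_\varepsilon$ with $V'_s = V_{h(s)}$, and symmetrically a holomorphic $h'$ in the other direction. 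These $h, h'$ are mutually inverse where defined (since the submanifold $V'_s = V_{h(s)}$ is determined and the Kodaira–Spencer maps are isomorphisms, forcing the composite to have derivative the identity, hence to be locally the identity), so $h = \Phi_V \circ \Phi_{V'}^{-1}$ is the transition function and it is biholomorphic. I would want to be a little careful that the $h$ provided by the abstract maximality statement really coincides with the set-theoretic transition map $\Phi_V \circ \Phi_{V'}^{-1}$ of the chosen charts — this is where the identity $V'_s = V_{h(s)}$ as the same submanifold of $W$ does the work.

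The remaining bookkeeping is routine: the complex dimension is $l$ because each chart has image in $\mathbb{B}^l_\varepsilon \subset \mathbb{C}^l$; separatedness and second countability were already arranged ($T_2$ in Lemma \ref{topo}, $C_2$ assumed in the paragraph preceding the theorem). The main obstacle I anticipate is precisely the chart-compatibility step: extracting from Kodaira's maximality property a transition map that is both globally well-defined on the full overlap $U_{\pi^V}(\varepsilon)\cap U_{\pi^{V'}}(\varepsilon')$ (a priori maximality only gives a germ near one point) and genuinely holomorphic as a map $\mathbb{B}^l \supset \cdot \to \mathbb{B}^l$. I would handle the globality by covering the overlap with the basic neighborhoods $U_{\pi^{V''}}(\delta)$ and patching the locally-defined holomorphic maps, using uniqueness (two analytic families through the same submanifold with isomorphic Kodaira–Spencer maps are related by a unique such $h$, by maximality applied both ways) to see the local pieces agree on intersections. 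This uniqueness is the technical heart and deserves an explicit short lemma before the main argument.
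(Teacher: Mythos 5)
Your proposal is correct and follows essentially the same route as the paper: the canonical local deformations $\pi^V \colon U_{\pi^V}(\varepsilon) \to \mathbb{B}^l_\varepsilon$ serve as charts (homeomorphisms via the Kodaira--Spencer isomorphism), and holomorphy of the transition maps is extracted from the maximality property of Theorem \ref{main Kod1} applied at points of the overlap, exactly as in the paper's proof, which merely factors the transition map through the canonical deformation of an intermediate point $V''$ rather than comparing the two families directly. Your extra care about the openness of $\widetilde{\mathscr{M}^l_X}$ and about identifying the abstract $h$ with the set-theoretic transition map is sound but amounts to the same argument; note that since the transition map is already globally defined as a map of sets and holomorphy is local, no genuine patching lemma is needed.
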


\begin{proof}
Expanding on the notations introduced in the preceding lemma, consider for each $V \in \widetilde{\mathscr{M}^l_X}$, the map
\[
\pi^V: U_{\pi^V}(\varepsilon) \rightarrow \mathbb{B}_\varepsilon^l, \quad \text{where} \quad V_t = \left(\pi^V\right)^{-1}(t) \mapsto t \in \mathbb{B}_\varepsilon^l.
\]
This map $\pi^V$ manifests as a local homeomorphism for sufficiently small $\varepsilon$, justified by the isomorphism of the Kodaira-Spencer map. Consider any two elements $Y, Z \in \widetilde{\mathscr{M}_X^l}$, such that $V$ belongs to both $U_{\pi^Y}(\varepsilon_Y)$ and $U_{\pi^Z}(\varepsilon_Z)$. Choose $\varepsilon$ small enough to ensure $U_{\pi^V}(\varepsilon) \subseteq U_{\pi^Y}(\varepsilon_Y) \cap U_{\pi^Z}(\varepsilon_Z)$.

To establish the lemma, it suffices to demonstrate that $\pi^Y \circ \left(\pi^Z\right)^{-1}$ is holomorphic at the point $\left(\pi^Z\right)^{-1}(V)$.

Observe that the family $\pi^Y: U_{\pi^Y}(\varepsilon_Y) \rightarrow \mathbb{B}^l_{\varepsilon_Y}$ is maximal at the point $\pi^Y(V)$. Consequently, for a sufficiently small $\varepsilon$, the family $\pi^V: U_{\pi^V}(\varepsilon) \rightarrow \mathbb{B}^l_{\varepsilon}$ is subsumed within $\pi^Y$. Hence, there exists a holomorphic function $h: \mathbb{B}^l_\varepsilon \rightarrow \mathbb{B}^l_{\varepsilon_Y}$ satisfying $\left(\left(\pi^V\right)^{-1}(s)\right) = \left(\left(\pi^Y\right)^{-1}(h(s))\right)$. This implies that $\pi^Y \circ \left(\left(\pi^V\right)^{-1}(s)\right) = h(s)$ defines a biholomorphic map in the vicinity of $s=0$. A similar argument establishes that $\pi^Z \circ \left(\left(\pi^V\right)^{-1}(s)\right)$ is biholomorphic near $s=0$. Therefore,
\[
\pi^Y \circ \left(\pi^Z\right)^{-1} = \left(\pi^Y \circ \left(\left(\pi^V\right)^{-1}\right)\right) \circ \left(\pi^Z \circ \left(\left(\pi^V\right)^{-1}\right)\right)^{-1},
\]
is holomorphic at the point $\left(\left(\pi^Z\right)^{-1}(V)\right)$.
\end{proof}


\section{real structures on $\bigoplus^{2n}\mathcal{O}(1)\rightarrow\mathbb{CP}^1$}

In this section, we delve into the Riemann sphere $\mathbb{CP}^1$ and the vector bundle $\bigoplus^{2n}\mathcal{O}(1)$ over $\mathbb{CP}^1$. Additionally, we explore $\overline{\mathbb{CP}^1}$, a smooth manifold that is topologically identical to $\mathbb{CP}^1$ but is equipped with a different system of complex open charts. This distinction is particularly significant in the context of complex geometry and analysis. In this setting, the antipodal map $\sigma$ serves as a biholomorphic map from $\mathbb{CP}^1$ to $\overline{\mathbb{CP}^1}$. Utilizing $\sigma$, we define real structures on the vector bundle $\bigoplus^{2n}\mathcal{O}(1)$, corresponding to a quaternionic structure on $H^0(\mathbb{CP}^1, (\bigoplus^{2n}\mathcal{O}(1))\otimes\mathcal{O}(-1)) \cong \mathbb{C}^{2n}$. These structures are fundamental for the study of hyperkähler manifolds, a topic we will explore in Section 4. Our analysis and calculations throughout this section will be grounded in the use of the standard open cover of $\mathbb{CP}^1$, ensuring that our approach is both systematic and comprehensive.

The Riemann sphere can be depicted as the complex projective line $\mathbb{CP}^1 = \big\{ [z_0:z_1] \big| (z_0, z_1) \in \mathbb{C}^2 \setminus {0} \big\}$. For analytical convenience, we adopt a standard open covering of $\mathbb{CP}^1$, denoted as $U_0 \cup U_1$, where $U_0 = \big\{ [z_0:z_1] \in \mathbb{CP}^1 \big| z_0 \neq 0 \big\}$ and $U_1 = \big\{ [z_0:z_1] \in \mathbb{CP}^1 \big| z_1 \neq 0 \big\}$. The local holomorphic coordinates are defined as $\zeta_0 \big( [z_0:z_1] \big) = \frac{z_1}{z_0}$ on $U_0$ and $\zeta_1 \big( [z_0:z_1] \big) = \frac{z_0}{z_1}$ on $U_1$. Notably, $\zeta_0 \in U_0$ corresponds bijectively to $\zeta_1 \in U_1$ if and only if $\zeta_0 \cdot \zeta_1 = 1$. Furthermore, we define $\overline{\mathbb{CP}^1}$ to possess the same smooth manifold structure as $\mathbb{CP}^1$, with open charts $V_i := U_i$ for $i = 0,1$. Local holomorphic coordinates are introduced as $\eta_0\big([z_0:z_1]\big) = \frac{\bar{z}_1}{\bar{z}_0}$ on $V_0$, and $\eta_1\big([z_0:z_1]\big) = \frac{\bar{z}_0}{\bar{z}_1}$ on $V_1$. The conditions for equivalence between $\eta_0 \in V_0$ and $\eta_1 \in V_1$ are met if and only if $\eta_0 \cdot \eta_1 = 1$. Consequently, these holomorphic charts $(V_i, \eta_i, i = 0,1)$ endow $\overline{\mathbb{CP}^1}$ with a complex manifold structure.

In the framework of smooth manifolds, the identity map, denoted by $\mathfrak{I}$, maps the complex projective line $\mathbb{CP}^1$ to its conjugate $\overline{\mathbb{CP}^1}$. This map is characterized as antiholomorphic and can be articulated in the following manner:
\[
\begin{cases}
\mathfrak{I}_0 : U_0 \rightarrow V_0, \quad \zeta_0 \mapsto \bar{\zeta}_0, \\
\mathfrak{I}_1 : U_1 \rightarrow V_1, \quad \zeta_1 \mapsto \bar{\zeta}_1.
\end{cases}
\]
Likewise, the inverse map $\mathfrak{I}^{-1} : \overline{\mathbb{CP}^1} \rightarrow \mathbb{CP}^1$ retains the property of being antiholomorphic.

In a broader context, consider $(M, \mathbf{I})$ as a complex manifold. The conjugate manifold, denoted $\overline{M}$, is defined to be a smooth manifold that shares the same smooth structure as $M$, but with the complex structure altered to $-\mathbf{I}$. This definition implies that the identity map from the complex manifold $M=(M, \mathbf{I})$ to its conjugate $\overline{M}=(M, -\mathbf{I})$ is an antiholomorphic map.

It is well established that there exists a unique complex structure on the 2-sphere $S^2$, implying the existence of a biholomorphic map from $\mathbb{CP}^1$ to $\overline{\mathbb{CP}^1}$. A notable instance of such a map is the antipodal map on $S^2$, denoted by $\sigma$. This map can be delineated by the following system of equations:
\[
\begin{cases}
\sigma_0 : U_0 \rightarrow V_1, \quad \zeta_0 \mapsto -\zeta_0, \\
\sigma_1 : U_1 \rightarrow V_0, \quad \zeta_1 \mapsto -\zeta_1.
\end{cases}
\]
Define $\bar{\sigma} := \mathfrak{I}^{-1} \circ \sigma \circ \mathfrak{I}^{-1}$. It is straightforward to verify that $\sigma \circ \bar{\sigma} = \text{id}_{\overline{\mathbb{CP}^1}}$ and $\bar{\sigma} \circ \sigma = \text{id}_{\mathbb{CP}^1}$.

Let $\bigoplus^{2n}\mathcal{O}(1) = (U_0 \times \mathbb{C}^{2n}) \cup (U_1 \times \mathbb{C}^{2n})/\sim$ be a holomorphic vector bundle over $\mathbb{CP}^1$. The local coordinates on $U_0 \times \mathbb{C}^{2n}$ and $U_1 \times \mathbb{C}^{2n}$ are denoted as $(\zeta_0, x_0^1, \ldots, x_0^{2n})$ and $(\zeta_1, x_1^1, \ldots, x_1^{2n})$, respectively. In these coordinates, $(\zeta_0, x_0) \in U_0 \times \mathbb{C}^{2n}$ and $(\zeta_1, x_1) \in U_1 \times \mathbb{C}^{2n}$ are equivalent if 
\[
\begin{cases}
\zeta_0 \cdot \zeta_1 = 1, \\
x_0^\alpha = \zeta_0 \cdot x_1^\alpha \quad \text{for } \alpha = 1, \ldots, 2n.
\end{cases}
\]

Similarly, we define $\overline{\bigoplus^{2n}\mathcal{O}(1)}=(V_0\times \mathbb{C}^{2n})\bigcup(V_1\times \mathbb{C}^{2n})/\sim$ as a holomorphic vector bundle over $\overline{\mathbb{CP}^1}$. It is equipped with local coordinates $(\eta_i,y_i^1\cdots y_i^{2n})$ on $V_i\times \mathbb{C}^{2n}, i=0,1$. In these coordinates, $(\eta_0, y_0) \in U_0 \times \mathbb{C}^{2n}$ and $(\eta_1, y_1) \in U_1 \times \mathbb{C}^{2n}$ are equivalent if 
\[
\begin{cases}
\eta_0 \cdot \eta_1 = 1, \\
y_0^\alpha = \eta_0 \cdot y_1^\alpha \quad \text{for } \alpha = 1, \ldots, 2n.
\end{cases}
\]

We define an antiholomorphic map $\mathscr{I}: \bigoplus^{2n}\mathcal{O}(1) \rightarrow \overline{\bigoplus^{2n}\mathcal{O}(1)}$ such that:
\[
\begin{array}{ccc}
\bigoplus^{2n}\mathcal{O}(1) & \stackrel{\mathscr{I}}{\longrightarrow} & \overline{\bigoplus^{2n}\mathcal{O}(1)} \\
\downarrow & \circlearrowleft & \downarrow \\
\mathbb{CP}^1 & \stackrel{\mathfrak{I}}{\longrightarrow} & \overline{\mathbb{CP}^1}
\end{array}
\]
Here, we could express $\mathscr{I}$ as $\mathscr{I}_0$ and $\mathscr{I}_1$:
\[
\begin{cases}
\mathscr{I}_0: U_0 \times \mathbb{C}^{2n} \rightarrow V_0 \times \mathbb{C}^{2n}, \quad (\zeta_0, x_0) \mapsto (\bar\zeta_0, \bar x_0) \\
\mathscr{I}_1: U_1 \times \mathbb{C}^{2n} \rightarrow V_1 \times \mathbb{C}^{2n}, \quad (\zeta_1, x_1) \mapsto (\bar\zeta_1, \bar x_1)
\end{cases}
\]
It is essential to note that the map $\mathscr{I} : \bigoplus^{2n}\mathcal{O}(1) \rightarrow \overline{\bigoplus^{2n}\mathcal{O}(1)}$ acts as an identity map in the context of smooth manifolds and possesses antiholomorphic properties. Similarly, the inverse map $\mathscr{I}^{-1}: \overline{\bigoplus^{2n}\mathcal{O}(1)} \rightarrow \bigoplus^{2n}\mathcal{O}(1)$ also exhibits antiholomorphic characteristics.

\begin{definition}
A holomorphic mapping \(f: \bigoplus^{2n}\mathcal{O}(1) \rightarrow \overline{\bigoplus^{2n}\mathcal{O}(1)}\) is considered compatible with \(\sigma\) if the following diagram commutes, and the restriction of \(f\) to each fiber is a linear isomorphism:
\[
\begin{array}{ccc}
\bigoplus^{2n}\mathcal{O}(1) & \stackrel{f}{\longrightarrow} & \overline{\bigoplus^{2n}\mathcal{O}(1)} \\
\downarrow & \circlearrowleft & \downarrow \\
\mathbb{CP}^1 & \stackrel{\sigma}{\longrightarrow} & \overline{\mathbb{CP}^1}
\end{array}
\]
We define \(\bar f := \mathscr{I}^{-1} \circ f \circ \mathscr{I}^{-1}: \overline{\bigoplus^{2n}\mathcal{O}(1)} \rightarrow \bigoplus^{2n}\mathcal{O}(1)\).
\end{definition}

After the definition of a holomorphic mapping compatible with $\sigma$ has been established, we now turn our attention towards its specific representation. This definition provides us with a framework for analyzing such mappings, especially in terms of their linear isomorphism properties on the fibers. Through this definition, we are equipped to delve deeper into the intrinsic structure of the mapping $f$. In the upcoming lemma, we will specifically explore a form of this mapping, showcasing how it can be represented via a constant matrix $A$. 

\begin{lemma}
\label{holomorphic map compatible with sigma}
Let \(f: \bigoplus^{2n}\mathcal{O}(1) \rightarrow \bigoplus^{2n}\overline{\mathcal{O}(1)}\) be a holomorphic mapping that is compatible with \(\sigma\). Then \(f\) can be represented as \(f_0\) and \(f_1\) defined as follows:
\[
\begin{cases}
f_0: U_0 \times \mathbb{C}^{2n} \rightarrow V_1 \times \mathbb{C}^{2n} & (\zeta_0, x_0) \mapsto (-\zeta_0, -A \cdot x_0) \\
f_1: U_1 \times \mathbb{C}^{2n} \rightarrow V_0 \times \mathbb{C}^{2n} & (\zeta_1, x_1) \mapsto (-\zeta_1, A \cdot x_1)
\end{cases}
\]
where \(A \in GL(2n, \mathbb{C})\) is a constant matrix. Furthermore, if \(f \circ \bar f = \text{id}\), then the matrix \(A\) satisfies \(A \cdot \bar{A} = -\mathcal{I}_{2n}\).
\end{lemma}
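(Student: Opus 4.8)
The plan is to write $f$ in the two trivializing charts and exploit both the compatibility with $\sigma$ and holomorphicity to pin down its form, then translate the relation $f\circ\bar f=\mathrm{id}$ into an equation on the matrix $A$. First I would observe that since $f$ covers $\sigma$ over the base, on $U_0$ it must land in the fiber over $\sigma_0(\zeta_0)=-\zeta_0\in V_1$, so $f_0(\zeta_0,x_0)=(-\zeta_0,M_0(\zeta_0)x_0)$ for some holomorphic matrix-valued function $M_0$ on $U_0$, and similarly $f_1(\zeta_1,x_1)=(-\zeta_1,M_1(\zeta_1)x_1)$ on $U_1$; fiberwise linear isomorphism forces $M_0(\zeta_0),M_1(\zeta_1)\in GL(2n,\mathbb{C})$. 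Then I would impose compatibility with the transition functions: a point $(\zeta_0,x_0)\sim(\zeta_1,x_1)$ with $\zeta_0\zeta_1=1$, $x_0^\alpha=\zeta_0 x_1^\alpha$ must map to equivalent points in $\overline{\bigoplus^{2n}\mathcal{O}(1)}$. Writing out the equivalence on the target (with coordinates $\eta$, $y$, and relations $\eta_0\eta_1=1$, $y_0^\alpha=\eta_0 y_1^\alpha$), I get $\eta$-coordinates $-\zeta_1$ and $-\zeta_0$ with $(-\zeta_0)(-\zeta_1)=\zeta_0\zeta_1=1$, consistent; and on the fibers the compatibility condition becomes $M_0(\zeta_0)x_0 = (-\zeta_0)\,M_1(\zeta_1)x_1 = (-\zeta_0)M_1(1/\zeta_0)(x_0/\zeta_0)$, i.e. $M_0(\zeta_0) = -M_1(1/\zeta_0)$ as matrix functions on $U_0\cap U_1$.

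Next I would use holomorphicity globally. Since $M_0$ is holomorphic on $U_0=\mathbb{C}$ and $M_1$ is holomorphic on $U_1=\mathbb{C}$ (in the coordinate $\zeta_1$), the relation $M_0(\zeta_0)=-M_1(1/\zeta_0)$ valid for $\zeta_0\neq 0$ shows that $M_0(\zeta_0)$ extends holomorphically across $\zeta_0=\infty$ (because $M_1$ is holomorphic at $\zeta_1=0$), so each matrix entry of $M_0$ is a bounded entire function of $\zeta_0$, hence constant by Liouville. Set $A := -M_1(0)$, so $M_0\equiv -A$ — wait, I must track the sign carefully: the lemma states $f_0(\zeta_0,x_0)=(-\zeta_0,-A x_0)$ and $f_1(\zeta_1,x_1)=(-\zeta_1, A x_1)$, so $M_0\equiv -A$ and $M_1\equiv A$, and indeed $M_0=-M_1$ is exactly the constant form of the relation $M_0(\zeta_0)=-M_1(1/\zeta_0)$. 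Invertibility of $A$ follows from fiberwise isomorphism. This establishes the first claim.

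For the relation $A\bar A=-\mathcal{I}_{2n}$, I would compute $\bar f=\mathscr{I}^{-1}\circ f\circ\mathscr{I}^{-1}$ explicitly in charts. Using $\mathscr{I}_i(\zeta_i,x_i)=(\bar\zeta_i,\bar x_i)$ and its inverse, and the known form of $f$, I get on the appropriate chart $\bar f_0(\eta_0,y_0)=(-\eta_0,-\bar A\, y_0)$ and $\bar f_1(\eta_1,y_1)=(-\eta_1,\bar A\, y_1)$ (the conjugation of $f$ turns $A$ into $\bar A$, and the base map is again $\zeta\mapsto-\zeta$ now read as a map $\overline{\mathbb{CP}^1}\to\mathbb{CP}^1$). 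Then composing: starting from $(\zeta_0,x_0)\in U_0\times\mathbb{C}^{2n}$, apply $f_0$ to land in $V_1\times\mathbb{C}^{2n}$ as $(-\zeta_0,-Ax_0)$, then apply $\bar f_1$ (the branch of $\bar f$ on $V_1$) to get $(\zeta_0, \bar A(-Ax_0)) = (\zeta_0,-\bar A A x_0)$. Setting $f\circ\bar f=\mathrm{id}$ — equivalently $\bar f\circ f=\mathrm{id}$, which is what the above composition computes — forces $-\bar A A=\mathcal{I}_{2n}$, i.e. $\bar A A=-\mathcal{I}_{2n}$; conjugating gives $A\bar A=-\mathcal{I}_{2n}$ as claimed. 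The only mild subtlety, and the step I would be most careful about, is bookkeeping which chart of $\bar f$ composes with which chart of $f$ and getting all the signs from the two $\zeta\mapsto-\zeta$ maps and the two complex conjugations right; the rest is routine Liouville-type reasoning.
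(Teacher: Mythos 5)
Your argument is correct and follows essentially the same route as the paper: write $f$ in the two trivializing charts as $(\zeta_i,x_i)\mapsto(-\zeta_i,M_i(\zeta_i)x_i)$, use compatibility with the transition functions to get $M_0(\zeta_0)=-M_1(1/\zeta_0)$, conclude constancy by Liouville, and verify $A\bar A=-\mathcal{I}_{2n}$ by composing charts of $f$ and $\bar f$. Your chart bookkeeping and signs check out (and computing $\bar f\circ f$ instead of $f\circ\bar f$ is harmless since both are bijections), so there is nothing to add.
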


\begin{proof}
Consider a holomorphic map $f: \bigoplus^{2n}\mathcal{O}(1) \rightarrow \overline{\bigoplus^{2n}\mathcal{O}(1)}$, which is compatible with $\sigma$. In this context, $f$ can be decomposed into $(f_0, f_1)$, where:
\[
\begin{cases}
f_0: U_0 \times \mathbb{C}^{2n} \rightarrow V_1 \times \mathbb{C}^{2n}, & (\zeta_0, x_0) \mapsto \left(-\zeta_0, A_0(\zeta_0) \cdot x_0\right),\\
f_1: U_1 \times \mathbb{C}^{2n} \rightarrow V_0 \times \mathbb{C}^{2n}, & (\zeta_1, x_1) \mapsto \left(-\zeta_1, A_1(\zeta_1) \cdot x_1\right).
\end{cases}
\]
Here, $A_0: U_0 \cong \mathbb{C} \rightarrow GL(2n, \mathbb{C})$ and $A_1: U_1 \cong \mathbb{C} \rightarrow GL(2n, \mathbb{C})$ are holomorphic maps. We impose an additional condition that
\[
(\zeta_0, x_0) \sim (\zeta_1, x_1) \in \bigoplus^{2n}\mathcal{O}(1) \iff f(\zeta_0, x_0) \sim f(\zeta_1, x_1) \in \overline{\bigoplus^{2n}\mathcal{O}(1)},
\]
which implies that $-A_1(\zeta_1) = A_0(\zeta_1^{-1})$ for any $\zeta_1 \in U_0 \cap U_1 \cong \mathbb{C}^*$. Consequently, $A_0: U_0 \cong \mathbb{C} \rightarrow \mathbb{C}^*$ is a holomorphic matrix-valued function with bounded limits at infinity, indicating that $A_0$ is constant and $A_1 = -A_0$. Let us denote $A_1 = -A_0 = A$.

Define $f$ as the above map. Then, construct a holomorphic map $\bar{f} = \mathscr{I}^{-1} \circ f \circ \mathscr{I}^{-1}: \overline{\bigoplus^{2n}\mathcal{O}(1)} \rightarrow \bigoplus^{2n}\mathcal{O}(1)$. $\bar{f}$ can be represented by $\bar{f}_0$ and $\bar{f}_1$ as:
\[
\begin{cases}
\bar{f}_0: V_0 \times \mathbb{C}^{2n} \rightarrow U_1 \times \mathbb{C}, & (\eta_0, y_0) \mapsto \left(-\eta_0, -\bar{A} \cdot y_0\right),\\
\bar{f}_1: V_1 \times \mathbb{C}^{2n} \rightarrow U_0 \times \mathbb{C}, & (\eta_1, y_1) \mapsto \left(-\eta_1, \bar{A} \cdot y_1\right).
\end{cases}
\]
These representations utilize the same trivializations as those defined for the map $f$. A direct calculation shows that $f \circ \bar{f} = \text{id}$ if and only if $A \cdot \bar{A} = -\mathcal{I}_{2n}$.
\end{proof}

Consider a holomorphic map $f\colon \bigoplus^{2n}\mathcal{O}(1) \rightarrow \overline{\bigoplus^{2n}\mathcal{O}(1)}$ that is compatible with $\sigma$. Let $s$ be a holomorphic section of $\bigoplus^{2n}\mathcal{O}(1)$. We aim to examine certain holomorphic sections of $\overline{\bigoplus^{2n}\mathcal{O}(1)}$ that are induced by $s$. Commonly, there are two types of such induced sections: $\bar{s}$, defined as $\bar{s} := \mathscr{I} \circ s \circ \mathfrak{I}^{-1}$, and $\tilde{s}$, defined as $\tilde{s} := f \circ s \circ \sigma^{-1}$. We will analyze these sections in detail as follows.

Suppose $s\colon \mathbb{CP}^1 \rightarrow \bigoplus^{2n}\mathcal{O}(1)$ is a holomorphic section of the vector bundle $\bigoplus^{2n}\mathcal{O}(1)$. This section can be represented as $s_0$ and $s_1$ with respect to the trivialization $\bigoplus^{2n}\mathcal{O}(1) = (U_0 \times \mathbb{C}^{2n}) \cup (U_1 \times \mathbb{C}^{2n})/\sim$, as follows:
\[
\begin{cases}
s_0: U_0 \rightarrow U_0 \times \mathbb{C}^{2n}, & \zeta_0 \mapsto (\zeta_0, a + b \cdot \zeta_0),\\
s_1: U_1 \rightarrow U_1 \times \mathbb{C}^{2n}, & \zeta_1 \mapsto (\zeta_1, a \cdot \zeta_1 + b),
\end{cases}
\]
where $a, b \in \mathbb{C}^{2n}$. Considering the trivialization of $\bigoplus^{2n}\mathcal{O}(1)$ used to represent $s$, we define a $\mathbb{C}$-linear mapping
\[
\varphi: \mathbb{C}^{4n} \rightarrow H^0\left(\mathbb{CP}^1, \bigoplus^{2n}\mathcal{O}(1)\right); (a, b) \mapsto s,
\]
which is an isomorphism given that $\text{dim}_{\mathbb{C}} H^0(\mathbb{CP}^1, \bigoplus^{2n}\mathcal{O}(1)) = 4n$. Similarly, a $\mathbb{C}$-linear isomorphism $\psi: \mathbb{C}^{4n} \rightarrow H^0\left(\overline{\mathbb{CP}^1}, \overline{\bigoplus^{2n}\mathcal{O}(1)}\right)$ can be defined. The isomorphisms $\varphi$ and $\psi$ depending on the choice of trivialization.

Let $s$ be as defined above and $f$ be the map given in Lemma \ref{holomorphic map compatible with sigma}. We define $\bar{s} := \mathscr{I} \circ s \circ \mathfrak{I}^{-1}\colon \overline{\mathbb{CP}^1} \rightarrow \overline{\bigoplus^{2n}\mathcal{O}(1)}$ and the section $\bar{s}$ can be expressed as $\bar{s}_0$ and $\bar{s}_1$ as follows:
\[
\begin{cases}
\bar{s}_0: V_0 \rightarrow V_0 \times \mathbb{C}^{2n}, & \eta_0 \mapsto \left(\eta_0, \bar{a} + \bar{b} \cdot \eta_0\right),\\
\bar{s}_1: V_1 \rightarrow V_1 \times \mathbb{C}^{2n}, & \eta_1 \mapsto \left(\eta_1, \bar{a} \cdot \eta_1 + \bar{b}\right),
\end{cases}
\]
where $a, b \in \mathbb{C}^{2n}$. Define $\tilde{s} := f \circ s \circ \sigma^{-1}\colon \overline{\mathbb{CP}^1} \rightarrow \overline{\bigoplus^{2n}\mathcal{O}(1)}$ and the section $\tilde{s}$ can be expressed as $\tilde{s}_0$ and $\tilde{s}_1$ as follows:
\[
\begin{cases}
\tilde{s}_0: V_0 \rightarrow V_0 \times \mathbb{C}^{2n}, & \eta_0 \mapsto \left(\eta_0, A \cdot \left(-a \cdot \eta_0 + b\right)\right),\\
\tilde{s}_1: V_1 \rightarrow V_1 \times \mathbb{C}^{2n}, & \eta_1 \mapsto \left(\eta_1, A \cdot \left(-a + b \cdot \eta_1\right)\right),
\end{cases}
\]
where $a, b \in \mathbb{C}^{2n}$ and $A$ is the matrix in the lemma. This demonstrates that both $\bar{s}$ and $\tilde{s}$ are holomorphic sections of $\overline{\bigoplus^{2n}\mathcal{O}(1)}$.

If $s = \varphi(a, b)$ represents a holomorphic section of $\bigoplus^{2n}\mathcal{O}(1)$, then $\tilde{s}$ and $\bar{s}$ represent holomorphic sections of $\overline{\bigoplus^{2n}\mathcal{O}(1)}$ and can be expressed as $\psi(A \cdot b, -A \cdot a)$ and $\psi(\bar{a}, \bar{b})$, respectively. Conversely, if $\mathfrak{s} = \psi(a, b)$ is a holomorphic section of $\overline{\bigoplus^{2n}\mathcal{O}(1)}$, then $\mathfrak{\tilde{s}}$ and $\mathfrak{\bar{s}}$ are holomorphic sections of $\bigoplus^{2n}\mathcal{O}(1)$, given by $\mathfrak{\tilde{s}} := \bar{f} \circ s \circ \bar{\sigma}$ and $\mathfrak{\bar{s}} := \mathscr{I}^{-1} \circ s \circ \mathfrak{I}$, which can be expressed as $\varphi(\bar{A} \cdot b, -\bar{A} \cdot a)$ and $\varphi(\bar{a}, \bar{b})$, respectively.

In summary, we present the following facts:

\begin{lemma}
Let $s$ be a holomorphic section of $\bigoplus^{2n}\mathcal{O}(1)$, and let $f$ be a holomorphic map compatible with $\sigma$. Then, $f$ induces a linear automorphism $r_f$ on $H^0\left(\mathbb{CP}^1, \bigoplus^{2n}\mathcal{O}(1)\right)$, defined by 
\[ s \mapsto \mathscr{I}^{-1} \circ f \circ s \circ \sigma^{-1} \circ \mathfrak{I}. \]
Given a trivialization of $\bigoplus^{2n}\mathcal{O}(1) = (U_0 \times \mathbb{C}^{2n}) \cup (U_1 \times \mathbb{C}^{2n})/\sim$, corresponding to a standard open cover of $\mathbb{CP}^1$, we can associate $f$ with a constant matrix $A \in GL(2n, \mathbb{C})$ as detailed in Lemma \ref{holomorphic map compatible with sigma}. Consequently, if $s = \varphi(a, b)$ with respect to this trivialization, then $r_f(s) = \varphi\left(\bar A \cdot \bar b, -\bar A \cdot \bar a\right)$.
\end{lemma}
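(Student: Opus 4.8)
The plan is to reduce the statement to the explicit coordinate formulas already derived in this section and then compose them. First I would check that $r_f(s)$ is a well-defined holomorphic section of $\bigoplus^{2n}\mathcal{O}(1)$. The composite $\mathscr{I}^{-1}\circ f\circ s\circ\sigma^{-1}\circ\mathfrak{I}$ consists of two antiholomorphic maps ($\mathfrak{I}$ and $\mathscr{I}^{-1}$) and three holomorphic ones ($\sigma^{-1}$, $s$, $f$), so it is holomorphic; and on base maps it reads $\mathfrak{I}^{-1}\circ\sigma\circ\mathrm{id}\circ\sigma^{-1}\circ\mathfrak{I}=\mathrm{id}_{\mathbb{CP}^1}$, using that $f$ covers $\sigma$ (compatibility) and that $\mathscr{I}$ covers $\mathfrak{I}$. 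Hence $r_f(s)$ covers the identity and is genuinely a section.

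Next I would factor $r_f$ as $r_f(s)=\mathscr{I}^{-1}\circ\widetilde{s}\circ\mathfrak{I}$, where $\widetilde{s}:=f\circ s\circ\sigma^{-1}$ is exactly the induced section of $\overline{\bigoplus^{2n}\mathcal{O}(1)}$ analyzed above: if $s=\varphi(a,b)$ then $\widetilde{s}=\psi(A\cdot b,\,-A\cdot a)$. Applying the already-established identity $\mathscr{I}^{-1}\circ\psi(a',b')\circ\mathfrak{I}=\varphi(\overline{a'},\overline{b'})$ with $(a',b')=(A\cdot b,\,-A\cdot a)$ then gives $r_f(s)=\varphi(\overline{A\cdot b},\,-\overline{A\cdot a})=\varphi(\bar A\cdot\bar b,\,-\bar A\cdot\bar a)$, which is the claimed formula. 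Alternatively, for a from-scratch computation I would expand $r_f$ in the two standard charts $U_0,U_1$ using $\sigma_i,\mathfrak{I}_i,\mathscr{I}_i$ and the explicit $f_0,f_1$ from Lemma \ref{holomorphic map compatible with sigma}, reading off the coefficients exactly as was done when $\widetilde{s}$ and $\bar s$ were derived.

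Finally, I would conclude that $r_f$ is an automorphism of $H^0\big(\mathbb{CP}^1,\bigoplus^{2n}\mathcal{O}(1)\big)$: transported through the isomorphism $\varphi$ it becomes the self-map of $\mathbb{C}^{4n}$ given by $(a,b)\mapsto(\bar A\cdot\bar b,\,-\bar A\cdot\bar a)$, which is conjugate-linear and, since $A\in GL(2n,\mathbb{C})$ forces $\bar A\in GL(2n,\mathbb{C})$, is a bijection (an $\mathbb{R}$-linear isomorphism); thus $r_f$ is a (real-)linear automorphism. For later use I would also record that when additionally $f\circ\bar f=\mathrm{id}$, so that $A\bar A=-\mathcal{I}_{2n}$ by Lemma \ref{holomorphic map compatible with sigma} and hence $\bar A A=-\mathcal{I}_{2n}$ by conjugation, iterating gives $r_f^2=\mathrm{id}$, i.e. $r_f$ is an involution.

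The main obstacle here is purely bookkeeping: keeping track of which trivialization each intermediate object is written in (the chosen one for $s$ via $\varphi$, and its counterpart via $\psi$ for the conjugate bundle), and not conflating the holomorphic base maps $\sigma,\sigma^{-1}$ with the antiholomorphic $\mathfrak{I},\mathfrak{I}^{-1}$ when chasing chart formulas. Once the earlier formulas for $\widetilde{s}$ and for $\mathscr{I}^{-1}\circ(\cdot)\circ\mathfrak{I}$ are invoked, no genuinely new computation remains.
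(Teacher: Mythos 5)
Your proposal is correct and follows essentially the same route as the paper: the lemma is presented there as a summary of the immediately preceding computations, namely the factorization $r_f(s)=\mathscr{I}^{-1}\circ\tilde s\circ\mathfrak{I}$ with $\tilde s=f\circ s\circ\sigma^{-1}=\psi(A\cdot b,-A\cdot a)$ combined with the rule $\psi(a',b')\mapsto\varphi(\bar a',\bar b')$, exactly as in your second paragraph (see also the remark following the lemma, which records $r_f(s)=\bar{\mathfrak{t}}$ for $\mathfrak{t}=\tilde s$). Your additional checks (well-definedness over $\mathrm{id}_{\mathbb{CP}^1}$, conjugate-linearity, bijectivity, and $r_f^2=\mathrm{id}$ when $A\bar A=-\mathcal{I}_{2n}$) are all sound and consistent with how the paper later uses this map.
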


\begin{remark}
Let $s$ be a holomorphic section of $\bigoplus^{2n}\mathcal{O}(1)$. We define $\mathfrak{t}=\tilde{s}=f \circ s \circ \sigma^{-1}$ as a holomorphic section of $\overline{\bigoplus^{2n}\mathcal{O}(1)}$. According to the provided definition, it can be observed that $r_f(s) = \mathfrak{\bar{t}}$. Simultaneously, if we define $\mathfrak{r}=\bar{s} = \mathscr{I}\circ s\circ \mathfrak{I}^{-1}$ as a holomorphic section of $\overline{\bigoplus^{2n}\mathcal{O}(1)}$, then in this context, $r_f(s)$ is equivalent to $\mathfrak{\tilde{r}}$.
\end{remark}

\begin{definition}
\label{real structure and quaternionic structure}
Let $V$ be a complex vector space. A conjugate linear map on $V$, denoted as $\gamma: V\rightarrow V$, is defined as follows: 
\[
\gamma(\lambda\cdot v+\mu\cdot w)=\bar{\lambda}\cdot \gamma(v)+\bar{\mu}\cdot \gamma(w)
\]
for all $\lambda, \mu\in\mathbb{C}$ and $v,w\in V$.
\begin{itemize}
  \item A \textit{real structure} on $V$ is a conjugate linear map $r: V\rightarrow V$ that satisfies $r^2=\text{id}_V$.
  \item A \textit{quaternionic structure} on $V$ is a conjugate linear map $j: V\rightarrow V$ that satisfies $j^2=-\text{id}_V$.
\end{itemize}
\end{definition}

Let $V$ be a complex vector space with a quaternionic structure denoted by $j: V \rightarrow V$. Then, $V$ is equipped with a hypercomplex structure characterized by three linear maps: $\mathbf{I}(v) = \sqrt{-1} \cdot v$, $\mathbf{J}(v) = j(v)$, and $\mathbf{K}(v) = \sqrt{-1} \cdot j(v)$. These maps satisfy the quaternionic relations $\mathbf{I}^2 = \mathbf{J}^2 = \mathbf{K}^2 = -\textrm{id}_V$ and the compatibility condition $\mathbf{K} = \mathbf{I} \circ \mathbf{J} = -\mathbf{J} \circ \mathbf{I}$. It is important to note that for such a quaternionic structure to exist, $V$ must have an even complex dimension.

\begin{definition}
\label{Real structure}
A \textit{real structure} on $\bigoplus^{2n}\mathcal{O}(1)$ is a holomorphic map $f: \bigoplus^{2n}\mathcal{O}(1)\rightarrow\overline{\bigoplus^{2n}\mathcal{O}(1)}$ that is compatible with $\sigma$ and satisfies $f\circ\bar{f}=\text{id}$.
\end{definition}

We will now establish a correspondence between real structures on the vector bundle $\bigoplus^{2n}\mathcal{O}(1)$ and quaternionic structures on 
\[
H^0\left(\mathbb{CP}^1,\left(\bigoplus^{2n}\mathcal{O}(1)\right)\otimes\mathcal{O}(-1)\right) = H^0\Big(\mathbb{CP}^1,\mathbb{CP}^1\times\mathbb{C}^{2n}\Big) \cong \mathbb{C}^{2n}.
\]
which will play a crucial role in establishing hyperkähler structures in the following section.

\begin{prop}
\label{main section2}
\[
\begin{cases}
\mathscr{R} := \Bigl\{\text{real structures on }\bigoplus^{2n}\mathcal{O}(1)\Bigr\} \\
\mathcal{R} := \left\{\text{real structures on }H^0\Bigl(\mathbb{CP}^1, \bigoplus^{2n}\mathcal{O}(1)\Bigr)\right\} \\
\mathcal{J} := \left\{\text{quaternionic structures on }H^0\Bigl(\mathbb{CP}^1,\Bigl(\bigoplus^{2n}\mathcal{O}(1)\Bigr)\otimes\mathcal{O}(-1)\Bigr)\right\}
\end{cases}
\]
There exists an injection from $\mathscr{R}$ to $\mathcal{R}$. Moreover, there exists a one-to-one correspondence between $\mathscr{R}$ and $\mathcal{J}$.
\end{prop}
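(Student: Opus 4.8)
The plan is to funnel everything through the matrix description furnished by Lemma \ref{holomorphic map compatible with sigma}, so that the proposition becomes elementary linear algebra of $2n\times 2n$ matrices. Fix once and for all the standard trivialization $\bigoplus^{2n}\mathcal{O}(1)=(U_0\times\mathbb{C}^{2n})\cup(U_1\times\mathbb{C}^{2n})/\sim$, the isomorphism $\varphi\colon\mathbb{C}^{4n}\xrightarrow{\sim}H^0(\mathbb{CP}^1,\bigoplus^{2n}\mathcal{O}(1))$, $(a,b)\mapsto s$, and the canonical identification $H^0(\mathbb{CP}^1,(\bigoplus^{2n}\mathcal{O}(1))\otimes\mathcal{O}(-1))\cong\mathbb{C}^{2n}$ recorded above. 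By Definition \ref{Real structure} combined with Lemma \ref{holomorphic map compatible with sigma}, an element $f\in\mathscr{R}$ is precisely the datum of a matrix $A\in GL(2n,\mathbb{C})$ with $A\bar A=-\mathcal{I}_{2n}$, the correspondence being $f_0(\zeta_0,x_0)=(-\zeta_0,-A\cdot x_0)$, $f_1(\zeta_1,x_1)=(-\zeta_1,A\cdot x_1)$; thus the whole statement reduces to comparing three packagings of such a matrix.

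For the injection $\mathscr{R}\hookrightarrow\mathcal{R}$, I would send $f$ with matrix $A$ to the conjugate linear automorphism $r_f$ of $H^0(\mathbb{CP}^1,\bigoplus^{2n}\mathcal{O}(1))$ introduced above, $s\mapsto\mathscr{I}^{-1}\circ f\circ s\circ\sigma^{-1}\circ\mathfrak{I}$, which in coordinates reads $\varphi(a,b)\mapsto\varphi(\bar A\bar b,-\bar A\bar a)$. It is conjugate linear by construction; iterating it once gives $\varphi(a,b)\mapsto\varphi(-\bar A A\,a,-\bar A A\,b)$, and conjugating $A\bar A=-\mathcal{I}_{2n}$ yields $\bar A A=-\mathcal{I}_{2n}$, hence $r_f^2=\mathrm{id}$ and $r_f\in\mathcal{R}$. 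Injectivity is clear, since the image of $\varphi(0,b)$ under $r_f$ recovers $\bar A$, hence $A$, hence $f$ in the fixed trivialization; it fails to be surjective, because a general real structure on $\mathbb{C}^{4n}$ is $v\mapsto C\bar v$ with $C\bar C=\mathcal{I}_{4n}$ for arbitrary $C$, whereas $r_f$ has the special block form $\left(\begin{smallmatrix}0&\bar A\\-\bar A&0\end{smallmatrix}\right)$. For the bijection $\mathscr{R}\leftrightarrow\mathcal{J}$, I would send $f$ with matrix $A$ to $j_f\colon v\mapsto A\bar v$ on $H^0(\mathbb{CP}^1,(\bigoplus^{2n}\mathcal{O}(1))\otimes\mathcal{O}(-1))\cong\mathbb{C}^{2n}$; then $j_f^2(v)=A\bar A\,v=-v$, so $j_f\in\mathcal{J}$. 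Conversely, any conjugate linear $j$ on $\mathbb{C}^{2n}$ equals $v\mapsto A\bar v$ for a unique $A\in M_{2n}(\mathbb{C})$, and $j^2=-\mathrm{id}$ forces $A\bar A=-\mathcal{I}_{2n}$ (so $A$ is automatically invertible), whence by Lemma \ref{holomorphic map compatible with sigma} this $A$ is the matrix of a unique $f\in\mathscr{R}$; the two assignments are manifestly inverse to one another.

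The point that genuinely deserves care, and which I expect to be the main conceptual obstacle, is explaining why this bijection is canonical and why twisting by $\mathcal{O}(-1)$ turns a real structure into a quaternionic one. I would handle this intrinsically: there is a natural isomorphism $H^0(\mathbb{CP}^1,\bigoplus^{2n}\mathcal{O}(1))\cong H^0(\mathbb{CP}^1,\mathcal{O}(1))\otimes H^0(\mathbb{CP}^1,(\bigoplus^{2n}\mathcal{O}(1))\otimes\mathcal{O}(-1))$ in which $H^0(\mathbb{CP}^1,\mathcal{O}(1))\cong\mathbb{C}^2$ carries the quaternionic structure $q$ induced by $\sigma$ (the line bundle $\mathcal{O}(1)$ admitting over $\sigma$ only a conjugate linear lift with $q\circ\bar q=-\mathrm{id}$, never a genuine real structure); under this decomposition $r_f$ becomes $q\otimes j_f$, and the general fact that a tensor product of two quaternionic structures is a real structure accounts simultaneously for $r_f^2=\mathrm{id}$ and for the sign giving $j_f^2=-\mathrm{id}$. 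The same decomposition shows the correspondence $\mathscr{R}\leftrightarrow\mathcal{J}$ is independent of the chosen trivialization, since any admissible change of trivialization acts by a single constant matrix $P\in GL(2n,\mathbb{C})$ on $A$, on $\varphi$, and on the identification $H^0(\mathbb{CP}^1,(\bigoplus^{2n}\mathcal{O}(1))\otimes\mathcal{O}(-1))\cong\mathbb{C}^{2n}$ at once, leaving $j_f$ intact as an endomorphism of the intrinsic space. Everything else — the conjugate linearity checks and the identity $A\bar A=-\mathcal{I}_{2n}\Leftrightarrow\bar A A=-\mathcal{I}_{2n}$ — is routine bookkeeping with the maps $\mathscr{I}$, $\mathfrak{I}$, $\sigma$, $\bar\sigma$ already set up in this section.
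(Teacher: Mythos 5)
Your proposal is correct and follows essentially the same route as the paper: both reduce a real structure $f$ to its matrix $A\in GL(2n,\mathbb{C})$ with $A\bar A=-\mathcal{I}_{2n}$ via Lemma \ref{holomorphic map compatible with sigma}, define $r_f(\varphi(a,b))=\varphi(\bar A\bar b,-\bar A\bar a)$ and a quaternionic structure on $\mathbb{C}^{2n}$, and invert the construction by reading off $A$ from an arbitrary quaternionic structure. Two small remarks. First, you set $j_f(v)=A\bar v$ where the paper sets $j_f(x)=\bar A\bar x$; since $A\mapsto\bar A$ is an involution on the set of matrices satisfying $A\bar A=-\mathcal{I}_{2n}$, either choice yields a bijection $\mathscr{R}\leftrightarrow\mathcal{J}$, but the paper's convention is the one consistent with the identity $r_f(\varphi(a,b))=\varphi\bigl(j_f(b),-j_f(a)\bigr)$ used in Corollary \ref{induced real and quaternionic structures}, so you would want to match it downstream. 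Second, for trivialization-independence the paper computes explicitly with the constant change-of-frame matrix $P$ (showing $A'=\bar P A P^{-1}$ and that $x\mapsto\bar A\bar x$ transforms covariantly), whereas you invoke the intrinsic decomposition $H^0(\bigoplus^{2n}\mathcal{O}(1))\cong H^0(\mathcal{O}(1))\otimes H^0\bigl((\bigoplus^{2n}\mathcal{O}(1))\otimes\mathcal{O}(-1)\bigr)$ with the quaternionic lift $q$ of $\sigma$ to $\mathcal{O}(1)$; this is a cleaner conceptual explanation of why the twist by $\mathcal{O}(-1)$ flips real to quaternionic, but note that $q$ is only determined up to a unit scalar, so $j_f$ is canonical only after fixing a trivialization of $\mathcal{O}(-1)$ -- exactly the caveat the paper records in Remark \ref{trivialization of O(-1)}. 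With that qualification your argument is complete.
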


\begin{proof} 
We consider trivializations of $\bigoplus^{2n}\mathcal{O}(1) = (U_0 \times \mathbb{C}^{2n}) \bigcup (U_1 \times \mathbb{C}^{2n})/\sim$ and $\mathcal{O}(-1) = (U_0 \times \mathbb{C}) \bigcup (U_1 \times \mathbb{C})/\sim$ with respect to the standard open cover of $\mathbb{CP}^1$. 

In this trivialization, any holomorphic section of $\bigoplus^{2n}\mathcal{O}(1)$ can be represented as $\varphi(a,b)$, and any holomorphic section of $\left(\bigoplus^{2n}\mathcal{O}(1)\right)\otimes\mathcal{O}(-1)$ can be represented by the constant $x\in\mathbb{C}^{2n}$. Suppose $f$ is a real structure on $\bigoplus^{2n}\mathcal{O}(1)$, defined by $f_0$ and $f_1$ under the given trivialization. We can find a constant matrix $A \in GL(2n, \mathbb{C})$ such that $A \bar{A} = -\mathcal{I}_{2n}$, and
\[
\begin{cases}
f_0: U_0 \times \mathbb{C}^{2n} \rightarrow V_1 \times \mathbb{C}^{2n} & (\zeta_0, x_0) \mapsto (-\zeta_0, -A x_0), \\
f_1: U_1 \times \mathbb{C}^{2n} \rightarrow V_0 \times \mathbb{C}^{2n} & (\zeta_1, x_1) \mapsto (-\zeta_1, A x_1).
\end{cases}
\]
Then $f$ corresponds to the real structure $r_f$ on $H^0\Bigl(\mathbb{CP}^1, \bigoplus^{2n}\mathcal{O}(1)\Bigr)$ and the quaternionic structure $j_f$ on $H^0\Bigl(\mathbb{CP}^1,\Bigl(\bigoplus^{2n}\mathcal{O}(1)\Bigr)\otimes\mathcal{O}(-1)\Bigr)$ as follows: For a holomorphic section $s = \varphi(a, b)$ of $\bigoplus^{2n}\mathcal{O}(1)$ and a holomorphic section $x\in\mathbb{C}^{2n}$ of the bundle $\left(\bigoplus^{2n}\mathcal{O}(1)\right)\otimes \mathcal{O}(-1)$ with respect to the given trivialization, define
\[
\begin{cases}
r_f(s) := \overline{f \circ s \circ \sigma^{-1}} = \varphi\left(\bar{A} \bar{b}, -\bar{A} \bar{a}\right), \\
j_f(x) := \bar A\bar x
\end{cases}
\]
On the other hand, for any quaternionic structure $j$ on $\mathbb{C}^{2n}$, there exists a matrix $A\in GL(2n,\mathbb{C})$ such that $j(x)$ is given by $\bar A\cdot \bar x$. We can then construct $f_0$ and $f_1$ similarly as mentioned above to represent a real structure $f$ on $\bigoplus^{2n}\mathcal{O}(1)$. The mapping from $f$ to $r_f$ is invariant under different choices of trivialization.

Next, we will demonstrate that the correspondences between $f$ and $j_f$ are also unaffected by the choice of trivialization of $\bigoplus^{2n}\mathcal{O}(1)$ in the following manner:

If another trivialization exists for $\bigoplus^{2n}\mathcal{O}(1)=(U_0\times(\mathbb{C}^{2n})^\prime)\bigcup(U_1\times(\mathbb{C}^{2n})^\prime)/\sim$ with respect to the standard open cover of $\mathbb{CP}^1$, let $(\zeta_0,x_0^\prime)$ and $(\zeta_1,x_1^\prime)$ be the coordinates on $(U_0\times(\mathbb{C}^{2n})^\prime)$ and $(U_1\times(\mathbb{C}^{2n})^\prime)$. Then there exist matrix-valued holomorphic functions $P_i:U_i\rightarrow GL(r,2n)$ such that $(\zeta_i,x_i)$ and $(\zeta_i,x_i^\prime)$ represent the same point in $\mathbb{CP}^1$ if and only if $x_i=P_i(\zeta_i)\cdot x_i^\prime$, $i=0,1$. Additionally, $(\zeta_0,x_0)\sim(\zeta_1,x_1)$ if and only if $(\zeta_0,x_0^\prime)\sim(\zeta_1,x_1^\prime)$, implying that 
\[
\begin{cases}
\zeta_0\cdot\zeta_1=1\\
x_0=\text{Diag}(\zeta_0,\cdots,\zeta_0)\cdot x_1
\end{cases}\Leftrightarrow\quad
\begin{cases}
\zeta_0\cdot\zeta_1=1\\
x_0^\prime=\text{Diag}(\zeta_0,\cdots,\zeta_0)\cdot x_1^\prime
\end{cases}
\]
This implies that $P_0(\zeta_0)^{-1}\cdot P_1(\zeta_0^{-1})=\mathcal{I}_{2n}$ for $\zeta_0\in\mathbb{C}^*$. Hence, $P_0(\zeta_0)$ is a holomorphic function on $U_0\cong\mathbb{C}$ with finite limits at infinity, indicating that $P_0$ is a constant function. We denote $P_1=P_0:=P$. 

Under this new local trivialization, the real structure $f$ also induces a matrix $A^\prime$. It can be observed that $A^\prime$ is equal to $\bar P\cdot A\cdot P^{-1}$. Additionally, in the previous trivialization, $x\in\mathbb{C}^n$ is transformed to $P\cdot x$ in the new trivialization. This is further mapped to $\bar A^\prime\cdot \overline{P\cdot x}=P\cdot\bar A\cdot\bar x$. Notably, this precisely represents the coordinates of $j_f(x)$ in the new trivialization.
\end{proof}

\begin{remark}
\label{trivialization of O(-1)}
It is worth noting that the definition of $j_f(s)$ is not independent of the chosen trivialization of $\mathcal{O}(-1)$. Therefore, when discussing $j_f(s)$, it is essential to specify the trivialization of $\mathcal{O}(-1)$ that was used.
\end{remark}

Let $f$ be a real structure on $\bigoplus^{2n}\mathcal{O}(1)$. We choose a trivialization $(U_0 \times \mathbb{C}^{2n}) \bigcup (U_1 \times \mathbb{C}^{2n})/\sim$ for $\bigoplus^{2n}\mathcal{O}(1)$ and a trivialization for $\mathcal{O}(-1)$ based on the standard open cover of $\mathbb{CP}^1$. The maps $r_f$ and $j_f$ induced by $f$ have the following relationship:

\begin{cor}
\label{induced real and quaternionic structures}
For any holomorphic section $s=\varphi(a,b)$ in $H^0\Bigl(\mathbb{CP}^1, \bigoplus^{2n}\mathcal{O}(1)\Bigr)$ under this chosen trivialization, we can express $r_f(s)$ as $\varphi\left(j_f(b),-j_f(a)\right)$. Moreover, we can express $f$ as follows: 
\[
\begin{cases}
f_0: U_0 \times \mathbb{C}^{2n} \rightarrow V_1 \times \mathbb{C}^{2n} & (\zeta_0, x_0) \mapsto \left(-\zeta_0, -\overline{j_f(x_0)}\right), \\
f_1: U_1 \times \mathbb{C}^{2n} \rightarrow V_0 \times \mathbb{C}^{2n} & (\zeta_1, x_1) \mapsto \left(-\zeta_1, \overline{j_f(x_1)}\right).
\end{cases}
\]
\end{cor}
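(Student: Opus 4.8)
The plan is to unwind the definitions of $r_f$ and $j_f$ produced in Proposition \ref{main section2} and verify the two claimed formulas by direct computation in the fixed trivialization. Recall from the proof of Proposition \ref{main section2} that, once a trivialization $(U_0\times\mathbb{C}^{2n})\cup(U_1\times\mathbb{C}^{2n})/\sim$ of $\bigoplus^{2n}\mathcal{O}(1)$ is chosen, the real structure $f$ is encoded by a constant matrix $A\in GL(2n,\mathbb{C})$ with $A\bar A=-\mathcal{I}_{2n}$, that $r_f(\varphi(a,b))=\varphi(\bar A\bar b,-\bar A\bar a)$, and that $j_f(x)=\bar A\bar x$ for the induced trivialization of $\mathcal{O}(-1)$.

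First I would establish the formula $r_f(s)=\varphi\bigl(j_f(b),-j_f(a)\bigr)$. This is immediate: substituting $j_f(b)=\bar A\bar b$ and $j_f(a)=\bar A\bar a$ into the right-hand side gives $\varphi(\bar A\bar b,-\bar A\bar a)$, which is exactly the expression for $r_f(\varphi(a,b))$ obtained in Proposition \ref{main section2}. So this step is nothing more than rewriting the matrix $\bar A$ action as the map $j_f$.

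Next I would treat the fibrewise description of $f$. By Lemma \ref{holomorphic map compatible with sigma} (as restated in the proof of Proposition \ref{main section2}), $f_0(\zeta_0,x_0)=(-\zeta_0,-Ax_0)$ and $f_1(\zeta_1,x_1)=(-\zeta_1,Ax_1)$. It therefore suffices to check that $-Ax_0=-\overline{j_f(x_0)}$ and $Ax_1=\overline{j_f(x_1)}$, i.e. that $Ax=\overline{j_f(x)}$ for all $x\in\mathbb{C}^{2n}$. Since $j_f(x)=\bar A\bar x$, we have $\overline{j_f(x)}=\overline{\bar A\bar x}=A x$, which is precisely the identity needed. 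I would present this as a single line of conjugation. One small point worth spelling out for the reader: the $x$ appearing in the fibre coordinate of $\bigoplus^{2n}\mathcal{O}(1)$ and the $x\in\mathbb{C}^{2n}$ representing a section of $\bigl(\bigoplus^{2n}\mathcal{O}(1)\bigr)\otimes\mathcal{O}(-1)$ are identified through exactly the trivialization of $\mathcal{O}(-1)$ fixed before the corollary, so writing $j_f(x_0)$ for a fibre vector $x_0$ is legitimate with this choice.

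The computation here is entirely routine; the only genuine subtlety — and the place I would be most careful — is bookkeeping about trivialization dependence, as flagged in Remark \ref{trivialization of O(-1)}: the expression $\bar A$ for $j_f$ is tied to the induced trivialization of $\mathcal{O}(-1)$, and the statement of the corollary is made relative to that same choice, so no inconsistency arises, but I would state this dependence explicitly at the start of the proof so that the identifications $r_f(s)=\varphi(j_f(b),-j_f(a))$ and $Ax=\overline{j_f(x)}$ are unambiguous. With that caveat recorded, the proof is just the two short substitutions above.
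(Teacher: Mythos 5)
Your proposal is correct and matches the paper's intent: the corollary is stated without proof precisely because it follows by the two substitutions you give, namely $j_f(x)=\bar A\bar x$ into $r_f(\varphi(a,b))=\varphi(\bar A\bar b,-\bar A\bar a)$ and the conjugation $\overline{j_f(x)}=Ax$ into the formulas for $f_0,f_1$ from Lemma \ref{holomorphic map compatible with sigma}. Your explicit remark on the fixed trivialization of $\mathcal{O}(-1)$ is consistent with Remark \ref{trivialization of O(-1)} and adds no discrepancy.
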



\section{Twistor spaces}

In this section, we examine the sophisticated interconnection between holomorphic fiber bundles and hyperkähler manifolds, guided by the framework established by Hitchin, Karlhede, Lindstr{\"o}m, and Ro{\v{c}}ek \cite[Section 3.F]{hitchin1987hyperkahler}. We describe $\mathcal{Z}$ as a twistor space, stipulating it as a complex manifold with dimension $2n+1$ that satisfies a set of predefined conditions:

\begin{enumerate}
\item $\mathcal{Z}$ forms a holomorphic fiber bundle over $\mathbb{CP}^1$, represented by $p:\mathcal{Z}\rightarrow \mathbb{CP}^1$.
\item A real structure $\tau$ exists on $\mathcal{Z}$, compatible with the antipodal map $\sigma$.
\item $\mathcal{Z}$ supports a family of holomorphic sections $\mathcal{M}$, each with a normal bundle $T_Fs\cong\bigoplus^{2n}\mathcal{O}(1)$.
\item A specific holomorphic section $\omega$ of $\Lambda^2{(N\mathcal{Z})}^\otimes p^*\mathcal{O}(2)$ is present.
\end{enumerate}

In accordance with conditions (1), we define a complex manifold $\mathcal{M}$ that acts as the parameter space for these holomorphic sections, each coupled with a normal bundle of $\bigoplus^{2n}\mathcal{O}(1)$, as elaborated in Proposition \ref{open submanifold}. Building upon conditions (1), (2), and (3), the existence of a smooth manifold $M$, endowed with a hypercomplex structure and integrated as a smooth submanifold of $\mathcal{M}$, is established, as validated by Thoerem \ref{hypercomplex}. Finally, under the comprehensive conditions (1), (2), (3), and (4), Theorem \ref{main section4} presents a methodology for constructing a hyperkähler manifold $M$ of real dimension $4n$. This manifold serves as the parameter space for real sections of the fiber bundle $p:\mathcal{Z}\rightarrow\mathbb{CP}^1$, elucidating the deep connection between complex geometry and hyperkähler structures.

In the fiber bundle $\mathcal{Z}$, the vertical space is denoted as $N\mathcal{Z}$ and defined as the kernel of the projection map $\textrm{d}p: T\mathcal{Z} \rightarrow T\mathbb{CP}^1$. The antipodal map on $\mathbb{CP}^1$, denoted by $\sigma$, is defined in section 3. Specifically, $\bar{\mathcal{Z}}$ is a complex manifold that is topologically equivalent to $\mathcal{Z}$ but possesses another complex structure denoted as $\mathbf{I}_{\bar{\mathcal{Z}}} := -\mathbf{I}_{\mathcal{Z}}$. The map $\mathfrak{I}: \mathcal{Z} \rightarrow \bar{\mathcal{Z}}$ acts as an identity on the smooth manifold on $\mathcal{Z}$ and as an antiholomorphic map from the complex manifold $\mathcal{Z}$ to $\bar{\mathcal{Z}}$. Furthermore, the conjugation from $T^{1,0}\mathcal{Z}$ to $T^{0,1}\mathcal{Z}$ transforms the vertical space $N\mathcal{Z}$ into $\overline{N\mathcal{Z}}$, which then serves as the vertical space for the projection $\bar{p}: \bar{\mathcal{Z}} \rightarrow \overline{\mathbb{CP}^1}$.

\begin{definition}
A \textit{real structure} on $\mathcal{Z}$ is a holomorphic map $\tau: \mathcal{Z} \rightarrow \bar{\mathcal{Z}}$ satisfying $\tau \circ \bar{\tau} = \text{id}_\mathcal{Z}$, where $\bar{\tau} := \mathfrak{I}^{-1} \circ \tau \circ \mathfrak{I}:\bar{\mathcal{Z}}\rightarrow\mathcal{Z}$. In this context, $\bar{\tau}$ is an antiholomorphic map from $\bar{\mathcal{Z}}$ to $\mathcal{Z}$, and it aligns with $\tau$ as a smooth map on $\mathcal{Z}$.
\end{definition}

\begin{definition}
A real structure $\tau$ on $\mathcal{Z}$ is \textit{compatible with $\sigma$} if it satisfies the commutativity of the following diagram: 
\begin{align*}
\begin{array}{ccc}
\mathcal{Z} & \stackrel{\tau}{\longrightarrow} & \bar{\mathcal{Z}} \\
\downarrow & \circlearrowleft & \downarrow \\
\mathbb{CP}^1 & \stackrel{\sigma}{\longrightarrow} & \overline{\mathbb{CP}^1}
\end{array}
\end{align*}
\end{definition}

Let $\tau$ denote a real structure on $\mathcal{Z}$, and consider a holomorphic section $s: \mathbb{CP}^1 \rightarrow \mathcal{Z}$. Consequently, the image $s(\mathbb{CP}^1)$ forms a complex submanifold within $\mathcal{Z}$. Following this, the normal bundle $Ns(\mathbb{CP}^1)$ of $s(\mathbb{CP}^1)$ in $\mathcal{Z}$ is examined in accordance with Definition \ref{normal bundle}. Specifically, we observe that $\left.T\mathcal{Z}\right|_{s(\mathbb{CP}^1)}$ equals $\left.N\mathcal{Z}\right|_{s(\mathbb{CP}^1)} \oplus Ts(\mathbb{CP}^1)$, indicating that $Ns(\mathbb{CP}^1)$ is equivalent to $\left.N\mathcal{Z}\right|_{s(\mathbb{CP}^1)}$. For brevity, we adopt the notation $T_Fs$ to denote $\left.N\mathcal{Z}\right|_{s(\mathbb{CP}^1)}$, which we shall refer to as the \textit{normal bundle of section $s$}. If $\tau$ represents a real structure on $\mathcal{Z}$ that is compatible with $\sigma$, as indicated by the relation $\bar{p} \circ \tau = \sigma \circ p$, then it follows that $\textrm{d}\bar{p} \circ \textrm{d}\tau = \textrm{d}\sigma \circ \textrm{d}p$. From this, we can infer that $\textrm{d}\tau(N\mathcal{Z}) = \overline{N\mathcal{Z}}$.

\begin{definition}
\label{real sections}
Let $\tau$ be a real structure on $\mathcal{Z}$, compatible with $\sigma$. A holomorphic section $s$ of $\mathcal{Z}$ is termed a \textit{real section} if it satisfies the commutativity of the following diagram:
\begin{align*}
\begin{array}{ccc}
\mathcal{Z} & \stackrel{\tau}{\longrightarrow} & \bar{\mathcal{Z}} \\
s\hspace{-0.3em}\uparrow{~~~~~~}
 & \circlearrowleft & {~~~~}\uparrow\hspace{-0.3em}\bar s \\
\mathbb{CP}^1 & \stackrel{\sigma}{\longrightarrow} & \overline{\mathbb{CP}^1}
\end{array}
\end{align*}
where $\bar s$ is identified with $s$ as smooth sections of the smooth fiber bundle $\mathcal{Z}\to \mathbb{CP}^1$ and it is also regarded as a holomorphic section of the holomorphic fiber bundle $\mathcal{\bar Z}\to \overline{\mathbb{CP}^1}$.
\end{definition}

Consider a holomorphic section $s$ of $\mathcal{Z}$ with the normal bundle $T_Fs \cong \bigoplus^{2n}\mathcal{O}(1)$. Define $\tilde s := \tau \circ s \circ \sigma^{-1}$, which is a holomorphic section of $\bar{\mathcal{Z}}$. In this context, the vector bundle $\textrm{d}\tau(T_Fs)$ precisely aligns with the normal bundle $T_F\tilde s$ over $\tilde s(\overline{\mathbb{CP}^1})$ in $\bar{\mathcal{Z}}$, given that $\textrm{d}\tau(N\mathcal{Z})$ equals $\overline{N\mathcal{Z}}$. Furthermore, since $\tau$ is a biholomorphic map between $\mathcal{Z}$ and $\bar{\mathcal{Z}}$, it follows that $T_F\tilde s = \textrm{d}\tau(T_Fs) \cong \overline{\bigoplus^{2n}\mathcal{O}(1)}$. Importantly, when $s$ is a real section, this indicates that $\tilde s$ corresponds to both the smooth section $s$ of $\mathcal{Z}$ and the holomorphic section $\bar s$ of $\bar{\mathcal{Z}}$. As a result, $\textrm{d}\tau(T_Fs) = T_F\tilde s$ coincides with $\overline{T_Fs}$.

\begin{lemma}
Let $\tau$ be a real structure on $\mathcal{Z}$ compatible with $\sigma$, and consider $s$ as a real section of $\mathcal{Z}$ with the normal bundle $T_Fs \cong \bigoplus^{2n}\mathcal{O}(1)$. Then, the mapping $\textrm{d}\tau: T_Fs \rightarrow \overline{T_Fs}$ constitutes a real structure on $T_Fs$, as delineated in Definition \ref{Real structure}.
\end{lemma}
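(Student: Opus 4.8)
The plan is to verify, one by one, the three requirements of Definition \ref{Real structure} for the bundle map $f := \mathrm{d}\tau|_{T_Fs}$, drawing on the structural facts already assembled in the paragraphs preceding the lemma, and then to deduce the involution identity $f\circ\bar f = \mathrm{id}$ from the defining identity $\tau\circ\bar\tau = \mathrm{id}$ for $\tau$ by differentiation.

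First I would record why $f$ is a well-defined holomorphic bundle map with the correct source and target. Since $\tau\colon\mathcal{Z}\to\bar{\mathcal{Z}}$ is a biholomorphism, its differential $\mathrm{d}\tau$ is a holomorphic isomorphism of holomorphic tangent bundles covering $\tau$; by the already-noted relation $\mathrm{d}\bar p\circ\mathrm{d}\tau = \mathrm{d}\sigma\circ\mathrm{d}p$ it carries $N\mathcal{Z}$ isomorphically onto $\overline{N\mathcal{Z}}$, and hence restricts over $s(\mathbb{CP}^1)$ to an isomorphism $T_Fs \to T_F\tilde s$. Because $s$ is a real section, $\tilde s = \tau\circ s\circ\sigma^{-1}$ coincides with $\bar s$, so $T_F\tilde s = \overline{T_Fs}$ and $f\colon T_Fs\to\overline{T_Fs}$ is precisely the type of map considered in Definition \ref{Real structure}; holomorphy of $f$ is inherited from that of $\mathrm{d}\tau$.

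Next I would check compatibility with $\sigma$, which has two parts. Fiberwise, $f$ is a linear isomorphism: $\mathrm{d}\tau$ is a linear isomorphism on each tangent space and restricts as such to the subbundle $T_Fs$. For the base map, the section identities $p\circ s = \mathrm{id}_{\mathbb{CP}^1}$ and $\bar p\circ\bar s = \mathrm{id}_{\overline{\mathbb{CP}^1}}$ identify $s(\mathbb{CP}^1)$ with $\mathbb{CP}^1$ and $\bar s(\overline{\mathbb{CP}^1})$ with $\overline{\mathbb{CP}^1}$; under these identifications the relation $\tau\circ s = \tilde s\circ\sigma$ shows that $f$ sends the fiber of $T_Fs$ over $\zeta$ to the fiber of $\overline{T_Fs}$ over $\sigma(\zeta)$, i.e. the square defining ``compatible with $\sigma$'' commutes. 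Hence $f$ is a holomorphic map $T_Fs\to\overline{T_Fs}$ compatible with $\sigma$.

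The remaining point, and the one I expect to demand the most care, is the involution identity $f\circ\bar f = \mathrm{id}$, where $\bar f = \mathscr{I}^{-1}\circ f\circ\mathscr{I}^{-1}$. The delicate step is to match the abstract conjugation $\mathscr{I}$ on $T_Fs$ (which enters Definition \ref{Real structure} through the identification $T_Fs\cong\bigoplus^{2n}\mathcal{O}(1)$) with the differential of $\mathfrak{I}\colon\mathcal{Z}\to\bar{\mathcal{Z}}$ restricted to $N\mathcal{Z}$, i.e. to check that the conjugate-bundle structure used in the definition agrees with the one coming from the ambient complex geometry of $\mathcal{Z}$. Granting this identification, $\bar f$ equals $\mathrm{d}\bar\tau$ restricted to $\overline{T_Fs}$, with $\bar\tau = \mathfrak{I}^{-1}\circ\tau\circ\mathfrak{I}$; then by functoriality of the differential, $f\circ\bar f = \mathrm{d}(\tau\circ\bar\tau)\big|_{\overline{T_Fs}} = \mathrm{d}(\mathrm{id})\big|_{\overline{T_Fs}} = \mathrm{id}$. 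Assembling the three verifications shows that $\mathrm{d}\tau$ is a real structure on $T_Fs$ in the sense of Definition \ref{Real structure}.
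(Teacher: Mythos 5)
Your proposal is correct and follows essentially the same route as the paper's proof: compatibility with $\sigma$ is read off from the fact that $\mathrm{d}\tau$ sends the fiber of $T_Fs$ at $s(\zeta)$ to the fiber of $\overline{T_Fs}$ at $\tau(s(\zeta))=\bar s(\sigma(\zeta))$, and the involution identity is obtained by differentiating $\tau\circ\bar\tau=\mathrm{id}_{\mathcal{Z}}$. You are in fact more careful than the paper about the identification of $\mathscr{I}$ on $T_Fs$ with $\mathrm{d}\mathfrak{I}$ restricted to $N\mathcal{Z}$, a point the paper passes over silently.
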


\begin{proof}
For each point $\zeta \in \mathbb{CP}^1$, $\textrm{d}\tau$ maps a vector in $T_Fs$ at $s(\zeta)$ to a vector in $\overline{T_Fs}$ at $\tau(s(\zeta)) = \bar s(\sigma(\zeta))$. This demonstrates that $\textrm{d}\tau$ is compatible with $\sigma$. Furthermore, the condition $\tau \circ \bar\tau = \textrm{id}_\mathcal{Z}$ implies that $\textrm{d}\tau \circ \overline{\textrm{d}\tau} = \textrm{id}$.
\end{proof}

Based on this, we can now provide the following definition:

\begin{definition}
\label{induced real structure and quaternionic structure}
Given a real structure $\tau$ on $\mathcal{Z}$ compatible with $\sigma$, and a real section $s$ of $\mathcal{Z}$ with the normal bundle $T_Fs \cong \bigoplus^{2n}\mathcal{O}(1)$, we define $r(\tau, s)$ as the real structure on $H^0\big(s(\mathbb{CP}^1), T_Fs\big)$. Similarly, $j(\tau, s)$ is defined as the quaternionic structure on $H^0\big(s(\mathbb{CP}^1), T_Fs\otimes\mathcal{O}(-1)\big)$, both induced by the real structure $\textrm{d}\tau$ on $T_Fs$ by proposition \ref{main section2}.
\end{definition}
  
As highlighted in Remark \ref{trivialization of O(-1)}, when discussing $j(\tau, s)$, it is crucial to specify a fixed trivialization of $\mathcal{O}(-1)$ for each real holomorphic section $s$. This is accomplished as follows: We initially establish a trivialization of the vector bundle $\varpi: \mathcal{O}(-1) \rightarrow \mathbb{CP}^1$. Then, considering the map $p: s(\mathbb{CP}^1) \rightarrow \mathbb{CP}^1$ as a biholomorphism and the pullback bundle $p^*(\mathcal{O}(-1))$ on $s(\mathbb{CP}^1)$ is endowed with a natural trivialization. This trivialization is induced by the original bundle $\varpi: \mathcal{O}(-1) \rightarrow \mathbb{CP}^1$ for each real holomorphic section $s$. In subsequent discussions regarding the bundle $\mathcal{O}(-1)$ over $s(\mathbb{CP}^1)$, it is assumed that this particular trivialization is applied.

We define $s$ as a holomorphic section of $\mathcal{Z}$, where $T_Fs \cong \bigoplus^{2n}\mathcal{O}(1)$. The image $s(\mathbb{CP}^1)$ forms a compact complex submanifold of $\mathcal{Z}$. Furthermore, we consider the regular deformation space $\widetilde{\mathscr{M}^{4n}_{s(\mathbb{CP}^1)}}$ within $\mathcal{Z}$, as described in Definition \ref{regular deformation space}. It has been established that $\widetilde{\mathscr{M}^{4n}_{s(\mathbb{CP}^1)}}$ is a complex manifold of dimension $4n$ in theorem \ref{complex regular deformation space}. We now proceed to define
\[
\mathcal{M}:=\left\{V \in \widetilde{\mathscr{M}^{4n}_{s(\mathbb{CP}^1)}} | V \text{ is a holomorphic section of } \mathcal{Z}; T_FV \cong \bigoplus^{2n}\mathcal{O}(1)\right\}
\]

\begin{lemma}
\label{open submanifold}
$\mathcal{M}$ constitutes an open submanifold within $\widetilde{\mathscr{M}^{4n}_{s(\mathbb{CP}^1)}}$.
\end{lemma}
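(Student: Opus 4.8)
The plan is to show that $\mathcal{M}$ is an \emph{open} subset of the complex manifold $\widetilde{\mathscr{M}^{4n}_{s(\mathbb{CP}^1)}}$ (which carries a complex structure by Theorem~\ref{complex regular deformation space}); an open subset of a complex manifold is an open submanifold, so this suffices. Fix $V\in\mathcal{M}$ and let $\pi^V\colon U_{\pi^V}(\varepsilon)\to\mathbb{B}^{4n}_\varepsilon$, $V_t\mapsto t$, be the canonical local deformation of $V$ (Proposition~\ref{canonical local deformation}), so $V_0=V$ and $\{U_{\pi^V}(1/n)\}_{n\gg1}$ is an $\mathscr{S}_X$-neighbourhood basis at $V$. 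It is enough to prove that, after shrinking $\varepsilon$, every fiber $V_t$ again lies in $\mathcal{M}$, i.e. that (a) $V_t$ is the image of a holomorphic section of $p\colon\mathcal{Z}\to\mathbb{CP}^1$, and (b) $T_FV_t=NV_t\cong\bigoplus^{2n}\mathcal{O}(1)$.

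For (a): each $V_t$ is a connected compact complex manifold of dimension $1$, i.e. a compact Riemann surface. Since $U_{\pi^V}(\varepsilon)$ is connected, the class of $V_t$ in $H_2(\mathcal{Z};\mathbb{Z})$ is independent of $t$, so $\langle p^*[\kappa],[V_t]\rangle=\langle p^*[\kappa],[V_0]\rangle=1$ for a K\"ahler form $\kappa$ on $\mathbb{CP}^1$; in particular $p|_{V_t}\colon V_t\to\mathbb{CP}^1$ is a non-constant holomorphic map of topological degree $1$. A degree-one holomorphic map between compact Riemann surfaces is unramified (each fiber, counted with multiplicity, is a single reduced point), hence a biholomorphism; thus $(p|_{V_t})^{-1}\colon\mathbb{CP}^1\to V_t\hookrightarrow\mathcal{Z}$ is a holomorphic section of $p$ with image $V_t$, and $V_t\cong\mathbb{CP}^1$. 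From the normal bundle sequence and the homological invariance of $\langle c_1(T\mathcal{Z}),[V_t]\rangle$ we also record $\deg NV_t=\langle c_1(T\mathcal{Z}),[V_0]\rangle-2=\deg NV_0=2n$.

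For (b): let $P\colon\mathcal{V}:=U_{\pi^V}(\varepsilon)\to\mathbb{CP}^1$ be the restriction of $p\circ j_1$ to $\mathcal{V}\subset\mathcal{Z}\times\mathbb{B}^{4n}_\varepsilon$, and set $\mathcal{L}:=P^*\mathcal{O}_{\mathbb{CP}^1}(-2)$, a holomorphic line bundle on $\mathcal{V}$. Tensoring the complex analytic family of normal bundles $\mathcal{B}\to\mathcal{V}$ with $\mathcal{L}$ produces a complex analytic family $\mathcal{B}\otimes\mathcal{L}\to\mathcal{V}\to\mathbb{B}^{4n}_\varepsilon$ in the sense of Definition~\ref{differentiable family of pi}, whose restriction to $V_t$ is $NV_t\otimes(p|_{V_t})^*\mathcal{O}(-2)$. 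Since $NV_0\cong\bigoplus^{2n}\mathcal{O}(1)$ we have $H^0\big(V_0,NV_0\otimes\mathcal{O}(-2)\big)=H^0\big(\mathbb{CP}^1,\bigoplus^{2n}\mathcal{O}(-1)\big)=0$, so the Principle of Upper Semi-continuity (Theorem~\ref{Principle of Upper Semi-continuity}) gives $H^0\big(V_t,NV_t\otimes\mathcal{O}(-2)\big)=0$ for all $t$ in a smaller polydisc $\mathbb{B}^{4n}_{\varepsilon'}$. Writing $NV_t\cong\bigoplus_{i=1}^{2n}\mathcal{O}(a_i)$ via Grothendieck's splitting theorem, this vanishing forces $a_i\le1$ for every $i$; combined with $\sum_i a_i=\deg NV_t=2n$ from (a) we get $\sum_i(1-a_i)=0$ with non-negative summands, hence $a_i=1$ for all $i$, i.e. $NV_t\cong\bigoplus^{2n}\mathcal{O}(1)$.

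Putting (a) and (b) together, $U_{\pi^V}(1/n)\subset\mathcal{M}$ as soon as $1/n<\varepsilon'$, so $\mathcal{M}$ contains an $\mathscr{S}_X$-neighbourhood of each of its points and is open in $\widetilde{\mathscr{M}^{4n}_{s(\mathbb{CP}^1)}}$. The heart of the argument — and the step where a naive approach fails — is (b): the bare condition $\dim H^0(V,NV)=4n$ does \emph{not} pin down the splitting type of $NV$ (for instance $\mathcal{O}(2)\oplus\mathcal{O}\oplus\mathcal{O}(1)^{\oplus(2n-2)}$ also has a $4n$-dimensional space of sections and vanishing $H^1$), so one must twist down by $\mathcal{O}(-2)$ so that upper semi-continuity of $h^0$ detects exactly the locus $\{NV\cong\bigoplus^{2n}\mathcal{O}(1)\}$. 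The two facts that make this legitimate are that $p|_{V_t}$ remains a biholomorphism for small $t$ (step (a)), and that $\mathcal{B}\otimes P^*\mathcal{O}(-2)$ is a genuine complex analytic family over $\pi^V$ to which Theorem~\ref{Principle of Upper Semi-continuity} applies.
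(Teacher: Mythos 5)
Your argument is correct, and it reaches the conclusion by a genuinely different route from the paper at both of its steps. For step (a) the paper does not argue homologically: the canonical local deformation of $s(\mathbb{CP}^1)$ is built in product charts $\mathscr{U}_i=\mathbb{B}_i\times U_i$ adapted to $p$, so each fiber $V_t$ is presented from the outset as a graph $z_i^\lambda=\varphi_i^\lambda(\zeta_i,t)$ over $\zeta_i$ and is therefore automatically the image of a section $s_t$; your degree-one argument gives the same conclusion without leaning on the special form of the charts, at the cost of invoking the homological invariance of $[V_t]$. For step (b), which you correctly identify as the real content, the paper proceeds by counting zeros: it takes a basis $\beta_\rho(t)$ of $H^0(V_t,T_Fs_t)$ varying holomorphically in $t$, observes that a nonzero section of $T_Fs_0\cong\bigoplus^{2n}\mathcal{O}(1)$ has at most one simple zero, and uses Rouch\'e's theorem to propagate this to nearby $t$, which forces each summand $\mathcal{O}(n_k(t))$ to satisfy $n_k(t)\le 1$; combined with $\sum_k n_k(t)=2n$ (from $h^0=4n$, $h^1=0$) this yields the splitting. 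Your twist of the family of normal bundles by $P^*\mathcal{O}(-2)$ followed by a single application of the Principle of Upper Semi-continuity (Theorem \ref{Principle of Upper Semi-continuity}) is cleaner and arguably more robust: it replaces the pointwise Rouch\'e analysis, which in the paper is carried out for one fixed combination $\beta=\sum_\rho\gamma^\rho\beta_\rho$ at a time and thus implicitly requires uniformity of the neighbourhood in $\gamma$, by the vanishing of the single cohomological invariant $h^0\bigl(NV_t\otimes\mathcal{O}(-2)\bigr)$, which detects exactly the locus where the splitting type is $\bigoplus^{2n}\mathcal{O}(1)$. Both arguments share the same skeleton (canonical local deformation, the neighbourhood basis $U_{\pi^V}(1/n)$, Grothendieck's splitting theorem, and $\deg NV_t=2n$), and your closing remark that $\dim H^0(V,NV)=4n$ alone does not determine the splitting type is exactly the point that makes the extra argument necessary.
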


\begin{proof}

Let $s$ belong to $\mathcal{M}$. Consider the standard open cover $U_0 \cup U_1$ of $\mathbb{CP}^1$, with each $U_i$ isomorphic to $\mathbb{C}$ and possessing a local coordinate $\zeta_i$ for $i = 0,1$. An open cover $\mathscr{U}_0 \cup \mathscr{U}_1$ of $s(\mathbb{CP}^1)$ in $\mathcal{Z}$ exists, where each $\mathscr{U}_i$ is represented as $\mathbb{B}_i\times U_i$, with $\mathbb{B}_i$ being open subsets of $\left(\mathbb{C}^{2n};z_i^1,\cdots,z_i^{2n}\right)$ for $i = 0,1$. Furthermore, $\mathscr{U}_i$ serves as local charts on $\mathcal{Z}$ with coordinates $(z_i^1,\cdots, z_i^{2n},\zeta_i)$. Define $z_i=(z_i^1,\cdots,z_i^{2n})$. The bundle map $p$ is explicitly represented in these coordinates as: $p:\mathscr{U}_i\rightarrow U_i;(z_i,\zeta_i)\mapsto \zeta_i$, for $i=0,1$.

Without loss of generality, we assume that $s(\mathbb{CP}^1) \cap \mathscr{U}_i = \{z_i = 0\}$ for each $i = 0,1$. The transition functions between $\mathscr{U}_0$ and $\mathscr{U}_1$ are given by:
\[
\begin{cases}
\zeta_0 \cdot \zeta_1 = 1 & \text{on } U_0 \cap U_1 \cong \mathbb{C}^* \\
z_1^\lambda = f^\lambda(z_0,\zeta_0) & \text{for } \lambda = 1,\cdots, 2n
\end{cases}
\]
Here, $\mathscr{V}_i$ denotes $\mathscr{U}_i \cap s(\mathbb{CP}^1)$ for $i = 0,1$. Considering that $H^1(s(\mathbb{CP}^1), T_Fs) = 0$, there exists an analytic family $\pi: \mathcal{V} \rightarrow \mathbb{B}^{4n}_\varepsilon$ of compact submanifolds which represents canonical local deformations of $s(\mathbb{CP}^1)$ within $\mathcal{M}$ and is characterized by the following properties:
\begin{enumerate}
    \item $V_0 = s(\mathbb{CP}^1)$.
    \item The Kodaira-Spencer map $\sigma_t$ of $\pi$ is a linear isomorphism for all $t \in \mathbb{B}^{4n}_\varepsilon$.
    \item $\pi$ is maximal at each point $t \in \mathbb{B}^{4n}_\varepsilon$.
\end{enumerate}
Moreover, $\pi^{-1}(\mathbb{B}_\varepsilon^{4n})$ is an open set in $\widetilde{\mathscr{M}^{4n}_{s(\mathbb{CP}^1)}}$. When $\varepsilon$ is sufficiently small, we only need to show $\pi^{-1}(\mathbb{B}_\varepsilon^{4n}) \subset \mathcal{M}$. Vector valued holomorphic functions $\varphi_i: \mathscr{V}_i \times \mathbb{B}^{4n}_\varepsilon \rightarrow \mathbb{C}^{2n}$, defined as $\varphi_i(\zeta_i, t^1, \cdots, t^{4n}) = (\varphi^1_i, \cdots, \varphi^{2n}_i)$ for $i = 0,1$, satisfy the following conditions:

Transition conditions:
\[
\begin{cases}
\zeta_0 \cdot \zeta_1 = 1 & \text{ on } U_0 \cap U_1 \cong \mathbb{C}^* \\
\varphi_1^\lambda = f^\lambda(\varphi^1_0, \cdots, \varphi^{2n}_0, \zeta_0) & \text{ for } \lambda = 1, \cdots, 2n
\end{cases}
\]

Boundary conditions:
\[
\begin{cases}
\varphi_i^\lambda(\zeta_i, 0) = 0 & i = 0, 1 \\
\left.\frac{\partial \varphi^\lambda_i}{\partial t^\rho}\right|_{t = 0} = \beta^\lambda_{\rho, i} & \rho = 1, \cdots, 4n
\end{cases}
\]

Here, the set $\{\beta_1, \cdots, \beta_{4n}\}$ forms a basis of $H^0(s(\mathbb{CP}^1), T_Fs)$, and 
\[\beta_{\rho, i} = \big(\beta_{\rho, i}^1(\zeta_i), \cdots, \beta_{\rho, i}^{2n}(\zeta_i)\big)\]
represents the local representation of $\beta_{\rho, i}$ within the local trivialization $U_i \times \mathbb{C}^{2n} \subset T_Fs$. For each $t \in \mathbb{B}_\varepsilon$, the complex submanifold $V_t := \pi^{-1}(t)$ is locally represented by $V_t \cap \mathscr{U}_i = \{z_i^\lambda = \varphi^\lambda_i(\zeta_i, t)\}$ in $\mathcal{Z}$. Define $s_t: \mathbb{CP}^1 \rightarrow \mathcal{Z}$ such that $s_t(\zeta_i) = \{z_i^\lambda = \varphi_i^\lambda(\zeta_i, t), \zeta_i\} \in \mathscr{U}_i$ for $i = 0, 1$. Hence, $V_t := \pi^{-1}(t)$ is the image of the section $s_t$ of $\mathcal{Z}$. Our goal is to demonstrate that $s_t$ has a normal bundle $T_Fs_t \cong \bigoplus^{2n}\mathcal{O}(1)$ for each $t \in \mathbb{B}_\varepsilon^{4n}$, provided that $\varepsilon$ is sufficiently small.

By the Grothendieck lemma \cite{grothendieck1957classification}, $T_Fs_t\cong\bigoplus_{k=1}^{2n}\mathcal{O}(n_k(t))$ as a holomorphic vector bundle. For $\varepsilon$ sufficiently small, we have:

$$
\begin{aligned}
H^0\left(s_t(\mathbb{CP}^1),\bigoplus_{k=1}^{2n}\mathcal{O}\big(n_k(t)\big)\right)&= H^0\left(s(\mathbb{CP}^1),\bigoplus_{k=1}^{2n}\mathcal{O}(1)\right)=4n\\
H^1\left(s_t(\mathbb{CP}^1),\bigoplus_{k=1}^{2n}\mathcal{O}\big(n_k(t)\big)\right)&\leq H^1\left(s(\mathbb{CP}^1),\bigoplus_{k=1}^{2n}\mathcal{O}(1)\right)=0
\end{aligned}
$$
which implies that $n_k(t)\geq-1$, $k=1,\cdots,2n$, and $\sum\limits_{k=1}^{2n}n_k(t)=2n$.

The system of functions 
\[
\left\{\beta_{\rho,i}: U_i \rightarrow \mathbb{C}^{2n}, \zeta_i \mapsto \left(\left. \frac{\partial \varphi_i^\lambda(\zeta_i,t)}{\partial t^\rho} \right|_{t=t_0}\right)_{\lambda=1,\cdots,2n} \right\}_{i=0,1}
\]
defines a holomorphic section of $T_Fs_{t_0}$ for each $\rho=1,\cdots,4n$ when $t_0$ is close to 0, denoted as $\beta_\rho(t_0)$. The set $\big(\beta_1(t_0),\cdots,\beta_{4n}(t_0)\big)$ forms a basis for $H^0\left(s_{t_0}(\mathbb{CP}^1),T_Fs_{t_0}\right)$ that remains holomorphic with respect to $t_0\in \mathbb{B}_\varepsilon^{4n}$. For any fixed $(\gamma^\rho)_{\rho=1}^{4n} \neq 0$ in $\mathbb{C}^{4n}$, the section $\beta:=\sum\limits_{\rho=1}^{4n}\gamma^\rho\beta_\rho$ is a non-trivial holomorphic section of $T_Fs \cong \bigoplus_{k=1}^{2n}\mathcal{O}(1)$ and therefore has at most a simple zero. Let $\beta(t_0) := \sum\limits_{\rho = 1}^{4n} \gamma^\rho \beta_\rho(t_0)$ represent a section of $T_Fs_{t_0}$, specifically $\beta(0) = \beta$. Locally on $\mathscr{U}_i$, $\beta(t_0)|_{\mathscr{U}_i}$ can be expressed as $\big(\beta^1_{i}(\zeta_i,t_0),\ldots,\beta^{2n}_{i}(\zeta_i,t_0)\big)$ for $i = 0, 1$. 

If the holomorphic section $\beta(0)$ never vanishes on $s(\mathbb{CP}^1)$, then neither does $\beta(t_0)$ for $t_0$ in the vicinity of 0. If $\beta$ has a simple zero, we can assume, without loss of generality, that the simple zero point of $\beta$ is $\zeta_0 = 0$, meaning that $\beta^\lambda_{0}(\zeta_0,0) = 0$ for each $\lambda = 1,\ldots,2n$. Specifically, we assume that $\zeta_0 = 0$ is a simple zero point of $\beta^1_{0}(\zeta_0,0)$. Let $\{\zeta_0 \in U_0 : \|\zeta_0\| = 1\}$ be a simple loop around $\zeta_0 = 0$. By Rouché's Theorem, $\beta^1_{0}(\zeta_0,t_0)$ would have exactly one simple zero point in the domain $\{\zeta_0 \in U_0 : \|\zeta_0\| < 1\}$ for each $t_0$ near 0. This implies that the holomorphic section $\beta_0(t_0)$ of $T_Fs_{t_0}$ has at most one zero point in the domain $s_{t_0}\big(\{\zeta_0 \in U_0 : \|\zeta_0\| < 1\}\big) \subset s_{t_0}(U_0)$ for each $t_0$ near 0. Additionally, since $\beta_i^\lambda(\zeta_i,0)$ does not vanish over $U_i \cap \{\zeta_0 \in U_0 : \|\zeta_0\| \leq 1\}$, then $\beta_i^\lambda(\zeta_i,t_0)$ also has no zero points on $U_i \cap \{\zeta_0 \in U_0 : \|\zeta_0\| \leq 1\}$ for $t_0$ near 0. This implies that the holomorphic section $\beta(t_0)$ has at most one simple zero point over $s_{t_0}(\mathbb{CP}^1)$.

Based on the preceding discussion, we can conclude that any non-trivial holomorphic section of $T_Fs_{t_0} \cong \bigoplus_{k=1}^{2n}\mathcal{O}(n_k(t_0))$ has at most one simple zero for $t_0$ in the vicinity of 0, implying that $n_k(t_0) \leq 1$. However, since $\sum_{k=1}^{2n}n_k(t)=2n$, we have established that when $\varepsilon$ is sufficiently small, all $n_k(t_0)$ for $k=1,\ldots,2n$ are equal to 1, i.e.,
\[
T_Fs_t\cong T_Fs\cong\bigoplus_{k=1}^{2n}\mathcal{O}(1) \quad \text{for all} \quad t\in \mathbb{B}^{4n}_\varepsilon.
\]

\end{proof}
In summary, follow the markings above we present the following lemma:

\begin{prop}
\label{first step of section4}
Let $\mathcal{Z}$ be a complex manifold, and let $\sigma$ be the antipodal map over $\mathbb{CP}^1$. We suppose the following: 
\begin{enumerate}
\item $\mathcal{Z}$ is a holomorphic fiber bundle over $\mathbb{CP}^1$, denoted by $p: \mathcal{Z} \rightarrow \mathbb{CP}^1$.
\item $\mathcal{Z}$ admits a holomorphic section $s$ whose normal bundle is $T_Fs \cong \bigoplus^{2n}\mathcal{O}(1)$.
\end{enumerate}

Then $\mathcal{Z}$ admits a family of holomorphic sections that are parameterized by a complex manifold $\mathcal{M}$ of dimension $4n$, where the normal bundle $T_Fs$ is isomorphic to $\bigoplus^{2n}\mathcal{O}(1)$ for each $s \in \mathcal{M}$.
\end{prop}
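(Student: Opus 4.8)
The plan is to read this proposition off from the deformation machinery of Section~2 together with Lemma~\ref{open submanifold}: almost all of the content has already been proved, so the task is to verify the hypotheses of the earlier theorems and to assemble a global family.

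First I would set $X := s(\mathbb{CP}^1)$. By condition~(1) and the fact that $s$ is a holomorphic section, $X$ is a compact complex submanifold of $\mathcal{Z}$ biholomorphic to $\mathbb{CP}^1$, and, as recorded before Definition~\ref{real sections}, its normal bundle is $NX = T_Fs \cong \bigoplus^{2n}\mathcal{O}(1)$. From $H^0(\mathbb{CP}^1,\mathcal{O}(1)) \cong \mathbb{C}^2$ and $H^1(\mathbb{CP}^1,\mathcal{O}(1)) = 0$ I obtain $\dim_{\mathbb{C}} H^0(X,NX) = 4n$ and $H^1(X,NX) = 0$. Hence $X$ satisfies the hypotheses of Theorem~\ref{main Kod1}, admits a canonical local deformation, and lies in $\widetilde{\mathscr{M}^{4n}_{s(\mathbb{CP}^1)}}$; in particular that set is non-empty, so by Theorem~\ref{complex regular deformation space} it is a complex manifold of dimension $4n$, with the canonical local deformations $U_{\pi^V}(\varepsilon)\cong\mathbb{B}^{4n}_\varepsilon$ as holomorphic charts. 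Defining $\mathcal{M}$ as the locus of those $V\in\widetilde{\mathscr{M}^{4n}_{s(\mathbb{CP}^1)}}$ that are images of holomorphic sections of $p$ with normal bundle $\cong\bigoplus^{2n}\mathcal{O}(1)$, we have $X\in\mathcal{M}$, and by Lemma~\ref{open submanifold} $\mathcal{M}$ is open in $\widetilde{\mathscr{M}^{4n}_{s(\mathbb{CP}^1)}}$, hence itself a non-empty complex manifold of dimension $4n$.

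It then remains to exhibit the tautological family of holomorphic sections over $\mathcal{M}$. Over each chart $U_{\pi^V}(\varepsilon)$, Proposition~\ref{canonical local deformation} provides an analytic family $\pi^V\colon\mathcal{V}^V\to\mathbb{B}^{4n}_\varepsilon$ whose fiber over $t$ is, in the local coordinates used in the proof of Lemma~\ref{open submanifold}, the graph $\{z_i^\lambda=\varphi_i^\lambda(\zeta_i,t)\}$ of a holomorphic section $s_t$ of $p$ with $T_Fs_t\cong\bigoplus^{2n}\mathcal{O}(1)$. The transition maps between overlapping charts are the biholomorphisms $\pi^Y\circ(\pi^Z)^{-1}$ from Theorem~\ref{complex regular deformation space}, and by construction they identify fibers that are the same submanifold of $\mathcal{Z}$; so the local families patch to a global analytic family $\pi\colon\mathcal{V}\to\mathcal{M}$, realized as a complex submanifold $\mathcal{V}\subset\mathcal{Z}\times\mathcal{M}$ with $\pi$ the second projection and each fiber the graph of a holomorphic section with the prescribed normal bundle. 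The map sending $V\in\mathcal{M}$ to the section whose image is $V$ is then the desired parameterization.

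The main obstacle, once the cohomology bookkeeping is done, is the patching step. One has to know that a sufficiently small deformation of the section graph $X$ is again the graph of a section — this holds because $p|_X$ is a biholomorphism, so $p$ restricted to a nearby compact Riemann surface $V_t$ of genus $0$ is a holomorphic map of degree one, hence invertible, and this is exactly what the local coordinate description in Lemma~\ref{open submanifold} encodes — and one has to check that the locally defined families are compatible on overlaps of canonical charts so that they glue to a single complex submanifold $\mathcal{V}\subset\mathcal{Z}\times\mathcal{M}$. Everything else is a direct appeal to Theorem~\ref{main Kod1}, Theorem~\ref{complex regular deformation space}, and Lemma~\ref{open submanifold}.
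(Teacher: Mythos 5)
Your proposal is correct and follows essentially the same route as the paper: the paper obtains this proposition as a direct summary of Theorem~\ref{complex regular deformation space} applied to $X=s(\mathbb{CP}^1)$ (using $H^1(X,NX)=0$ and $\dim H^0(X,NX)=4n$ for $NX\cong\bigoplus^{2n}\mathcal{O}(1)$) together with Lemma~\ref{open submanifold}, exactly as you do. Your extra paragraph gluing the canonical local families into a global tautological family $\mathcal{V}\subset\mathcal{Z}\times\mathcal{M}$ is a harmless elaboration beyond what the paper writes out, and your observation that nearby deformations remain graphs of sections is already built into the paper's choice of product charts $\mathscr{U}_i=\mathbb{B}_i\times U_i$ in the proof of Lemma~\ref{open submanifold}.
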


\begin{fact}
Let $M$ be a smooth manifold and let $G$ be a finite group acting on $M$ by diffeomorphisms. Then the set of fixed points $M^G = \Big\{m \in M : g \cdot m = m, \forall g \in G\Big\}$ is a smooth submanifold of $M$. Moreover, for any $m \in M^G \subset M$, the tangent vector $X_m \in T_mM^G$ if and only if $\mathrm{d}g(X_m) = X_m$ for any $g \in G$. (see, e.g. \cite[Corollary 2.5, p. 309]{bredon1972introduction}.)
\end{fact}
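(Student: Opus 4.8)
The plan is the classical averaging argument: linearize the $G$-action near a fixed point and exhibit the fixed-point set locally as a linear subspace. Fix $m \in M^G$. Since $G$ is finite, choose any Riemannian metric on $M$ and average it over $G$ to obtain a $G$-invariant metric $g$; then every $h \in G$ acts on $(M,g)$ by an isometry fixing $m$. Write $E := (T_mM)^G = \{v \in T_mM : \mathrm{d}h_m(v) = v \text{ for all } h \in G\}$, which is a linear subspace of $T_mM$ (it is the range of the projector $\frac{1}{|G|}\sum_{h\in G}\mathrm{d}h_m$).

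Next I would use that each isometry $h$ commutes with the exponential map of $g$ at $m$, i.e. $h(\exp_m v) = \exp_m(\mathrm{d}h_m v)$ for $v$ in a small ball $B \subset T_mM$ on which $\exp_m$ is a diffeomorphism onto a neighborhood $U \ni m$; shrinking $B$ we may take it $G$-invariant, since the norm of $g$ at $m$ is $G$-invariant. Equivariance then gives $\exp_m(B \cap E) = U \cap M^G$: if $v \in E$ then $\exp_m v$ is fixed by every $h$, so it lies in $M^G$; conversely, if $\exp_m v \in U$ is fixed by every $h$, then $\exp_m(\mathrm{d}h_m v) = \exp_m v$, and injectivity of $\exp_m$ on $B$ forces $\mathrm{d}h_m v = v$, i.e. $v \in E$. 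Hence $\exp_m|_{B\cap E}$ is a slice chart for $M^G$ near $m$, and ranging over all $m \in M^G$ shows $M^G$ is an embedded smooth submanifold, with $T_mM^G = \mathrm{d}(\exp_m)_0(E) = E = (T_mM)^G$ because $\mathrm{d}(\exp_m)_0 = \mathrm{id}$. For the tangent-space statement in isolation: $X_m \in T_mM^G$ means $X_m = \dot\gamma(0)$ for a curve $\gamma$ in $M^G$, whence $h\circ\gamma = \gamma$ gives $\mathrm{d}h(X_m) = X_m$ for every $h \in G$; conversely, any $X_m$ with this property lies in $E$, hence is tangent to $M^G$ by the chart just constructed.

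I expect essentially no genuine obstacle here; the only points requiring a line of care are that the averaged metric is $G$-invariant (immediate since $G$ is finite) and that $\exp_m$ is $G$-equivariant with a domain that can be chosen $G$-invariant (immediate from isometry-invariance of geodesics and of the norm at $m$). If one prefers to avoid Riemannian geometry altogether, Bochner's linearization theorem supplies $G$-equivariant local coordinates directly and the same slicing argument applies verbatim; the exponential-map route is simply the shortest, and in any case the statement is standard, which is why the author cites Bredon rather than reproving it.
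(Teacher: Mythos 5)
Your proof is correct and complete: the averaging of a metric over the finite group, the $G$-equivariance of $\exp_m$ at a fixed point, and the identification $\exp_m(B\cap E)=U\cap M^G$ together give exactly the standard slice-chart argument, and the identification $T_mM^G=(T_mM)^G$ falls out of it. The paper itself supplies no proof of this Fact --- it only cites Bredon --- and your argument is precisely the standard one underlying that citation, so there is nothing to compare beyond noting that you have filled in what the paper leaves to the reference.
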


Let $s \in \mathcal{M}$ be holomorphic sections of $p: \mathcal{Z} \rightarrow \mathbb{CP}^1$ such that $T_Fs \cong \bigoplus^{2n}\mathcal{O}(1)$, and we define $\mathfrak{s}:=\overline{\tau \circ s \circ \sigma^{-1}}$, which is also a holomorphic section of $\mathcal{Z}$. We then obtain a diffeomorphism $\iota$ on $\mathcal{M}$
\[
\iota:\mathcal{M}\rightarrow\mathcal{M} \quad s\mapsto\mathfrak{s}
\]
with $\iota^2=\mathrm{id}_\mathcal{M}$ and $s$ is a real section of $\mathcal{M}$ if and only if $\iota(s)=s$. By the fact we mentioned above, $\mathcal{M}$ encompasses a family of real holomorphic sections, denoted as $M$ which is a smooth submanifold of $\mathcal{M}$ since $M$ is the set of fixed points of $\iota$. Suppose $s \in M$ and $X$ is a tangent vector in $T_s\mathcal{M}$, then $X \in T_sM$ if and only if $\mathrm{d}\iota(X) = X$.
 
Let us recall that $r(\tau,s)$ refers to the real structure on $H^0\big(s(\mathbb{CP}^1),T_Fs\big)$ and $j(\tau,s)$ refers to the quaternionic structure on $H^0\big(s(\mathbb{CP}^1),T_Fs\otimes\mathcal{O}(-1)\big)$ induced by $\mathrm{d}\tau:T_Fs\rightarrow\overline{T_Fs}$ according to Definition \ref{induced real structure and quaternionic structure}. It is well-established that $T_s\mathcal{M}\cong H^0\big(s(\mathbb{CP}^1),T_Fs\big)$. Consequently, the real structure $r(\tau,s)$ on $H^0\big(s(\mathbb{CP}^1),T_Fs\big)$ directly induces the subspace $T_sM$ in $T_s\mathcal{M}$, as described below:

\begin{lemma}
For every real holomorphic section $s$ in the manifold $\mathcal{M}$, and for any tangent vector $X\in T_s\mathcal{M}=H^0(s(\mathbb{CP}^1),T_Fs)$, we obtain $\textrm{d}\iota(X)=r(\tau,s)(X)$.
\end{lemma}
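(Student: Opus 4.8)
The plan is to compute $\mathrm{d}\iota$ at a real section $s$ by differentiating an analytic family through $s$ and then to match the outcome against the explicit description of $r(\tau,s)$ supplied by Proposition \ref{main section2} and Definition \ref{induced real structure and quaternionic structure}. Conceptually, both $\mathrm{d}\iota$ and $r(\tau,s)$ are manufactured from the same ingredients, namely the biholomorphism $\tau$ (hence its fibrewise differential $\mathrm{d}\tau$), the antipodal map $\sigma$, and the conjugations $\mathfrak{I}$ on $\mathbb{CP}^1$ and on $\mathcal{Z}$; the real content is that the Kodaira--Spencer identification $T_s\mathcal{M}\cong H^0(s(\mathbb{CP}^1),T_Fs)$ is natural with respect to the anti-biholomorphism of $\mathcal{Z}$, covering the antipodal self-map of $\mathbb{CP}^1$, out of which $\iota$ is built. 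Since $\iota$ is an anti-holomorphic involution of $\mathcal{M}$, both $\mathrm{d}\iota_s$ and $r(\tau,s)$ are conjugate-linear involutions of $T_s\mathcal{M}$, so it suffices to check that the two coincide, and this is done by an explicit local computation.

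First I would fix a representing family. Using the canonical local charts $(\mathscr{U}_i;z_i,\zeta_i)$, $i=0,1$, on $\mathcal{Z}$ near $s(\mathbb{CP}^1)$ as in the proof of Lemma \ref{open submanifold}, let $\pi\colon\mathcal{V}\to\mathbb{B}^{4n}_\varepsilon$ be the canonical local deformation of $s(\mathbb{CP}^1)$, with $V_t\cap\mathscr{U}_i=\{z_i=\varphi_i(\zeta_i,t)\}$ and $\varphi_i(\cdot,0)=0$, parametrized so that a chosen tangent vector $\tfrac{\partial}{\partial t}$ at $0$ maps under the Kodaira--Spencer isomorphism to $X$; then $X$ is represented by the cocycle $\{X_i(\zeta_i):=\tfrac{\partial\varphi_i}{\partial t}(\zeta_i,0)\}_{i=0,1}$, a holomorphic section of $T_Fs$, and writing $s_t$ for the section with image $V_t$ the curve $t\mapsto s_t$ in $\mathcal{M}$ has $s_0=s$ and velocity $X$. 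Next I would transport this family by $\iota$: by definition $\iota(s_t)=\mathfrak{s}_t=\mathfrak{I}^{-1}\circ\tau\circ s_t\circ\sigma^{-1}\circ\mathfrak{I}$ (the conjugation on $\mathcal{Z}$ on the outside, the conjugation on $\mathbb{CP}^1$ on the inside), a smooth curve in $\mathcal{M}$ with $\mathfrak{s}_0=s$ because $s$ is real. The biholomorphism $\tau$ carries the charts $\mathscr{U}_i$ to charts on $\bar{\mathcal{Z}}$, and $\mathfrak{I}^{-1}$ carries these back to canonical local charts $\mathscr{U}_i'$ on $\mathcal{Z}$ adapted to $s(\mathbb{CP}^1)$; in the charts $\mathscr{U}_i'$ the submanifold $\mathfrak{s}_t(\mathbb{CP}^1)$ is cut out by equations obtained from $\{z_i=\varphi_i(\zeta_i,t)\}$ by applying the chart transformation induced by $\tau$ and conjugating the fibre coordinates. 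Differentiating these equations at $t=0$, and using that $t$-differentiation commutes with the holomorphic maps $\tau$ and $\sigma^{-1}$ while the two conjugations convert $\tfrac{\partial}{\partial t}$ into its complex conjugate, one finds that $\mathrm{d}\iota(X)$ is represented, in the charts $\mathscr{U}_i'$, by the cocycle obtained from $\{X_i\}$ upon post-composing with the conjugated fibrewise differential $\mathscr{I}^{-1}\circ\mathrm{d}\tau$ of $\tau$ and pre-composing with $\sigma^{-1}\circ\mathfrak{I}$; that is, $\mathrm{d}\iota(X)=\mathscr{I}^{-1}\circ\mathrm{d}\tau\circ X\circ\sigma^{-1}\circ\mathfrak{I}$ as a holomorphic section of $T_Fs$. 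By Definition \ref{induced real structure and quaternionic structure} together with the explicit formula for the induced real structure underlying Proposition \ref{main section2} (namely $X\mapsto\mathscr{I}^{-1}\circ f\circ X\circ\sigma^{-1}\circ\mathfrak{I}$ with $f=\mathrm{d}\tau$), the right-hand side is precisely $r(\tau,s)(X)$, and the lemma follows.

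The main obstacle is the middle step: making rigorous the assertion that differentiating the defining functions $\varphi_i$ of the deformation in the parameter $t$ translates, after transport by $\mathfrak{I}^{-1}\circ\tau$ over $\sigma^{-1}\circ\mathfrak{I}$, into post-composing the associated normal-bundle section with $\mathscr{I}^{-1}\circ\mathrm{d}\tau$. This requires care with the chart changes induced by $\tau$ and with the normal-bundle transition matrices, so that the transported local data genuinely patch to a holomorphic section of $T_Fs$ rather than a mere collection of vector-valued functions; it also requires observing that $\iota$ sends the canonical local deformation of $s(\mathbb{CP}^1)$ to a (conjugate) reparametrization of the canonical local deformation of $\iota(s)(\mathbb{CP}^1)=s(\mathbb{CP}^1)$, so that $\mathrm{d}\iota$ is indeed computed fibrewise as above. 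Once this naturality of the Kodaira--Spencer map under the anti-biholomorphism underlying $\iota$ is in hand, the remaining verifications are routine.
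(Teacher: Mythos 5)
Your proposal is correct and follows essentially the same route as the paper: differentiate a family through $s$ transported by $\iota$, identify the velocity with $\overline{\textrm{d}\tau\circ X\circ\sigma^{-1}}$, and invoke Proposition \ref{main section2} to recognize this as $r(\tau,s)(X)$. The paper's own proof is a terser version of the same computation (a formal difference quotient of sections), whereas you spell out the chart-level Kodaira--Spencer bookkeeping that makes that difference quotient meaningful; this is a fair elaboration rather than a different argument.
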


\begin{proof}
Let $s_\nu$ be a path in $\mathcal{M}$ for $\nu\in[0,1]$ such that $s_0=s$ and $\left.\frac{\textrm{d}}{\textrm{d}\nu}\right|_{\nu=0}s_\nu=X$.

\begin{align*}
\textrm{d}\iota(X)&=\left.\frac{\textrm{d}}{\textrm{d}\nu}\right|_{\nu=0}\iota(s_\nu)=\lim\limits_{\nu\rightarrow0}\overline{\frac{1}{\nu}(\tau\circ s_\nu\circ\sigma^{-1}-\tau\circ s\circ\sigma^{-1})}\\
&=\overline{\textrm{d}\tau\left(\lim\limits_{\nu\rightarrow0}\frac{1}{\nu}(s_\nu\circ\sigma^{-1}-s\circ\sigma^{-1})\right)}=\overline{\textrm{d}\tau \circ X \circ \sigma^{-1}}
\end{align*}
Furthermore, according to the discussion in proposition \ref{main section2}, we have $r(\tau,s)(X)=\overline{\textrm{d}\tau \circ X \circ \sigma^{-1}}=\textrm{d}\iota(X)$.
\end{proof}

Given a trivialization of the tangent bundle of $T_Fs$ with respect to the standard open cover of the base space $s(\mathbb{CP}^1)$, holomorphic sections of $T_Fs$ can be represented by the functions $\varphi_s(a,b)$, where $a$ and $b$ belong to $\mathbb{C}^{2n}$. Similarly, holomorphic sections of $T_Fs\otimes\mathcal{O}(-1)$ can be represented by the variable $x\in\mathbb{C}^{2n}$. Therefore, according to Corollary \ref{induced real and quaternionic structures}, the real structure $r(\tau,s)\big(\varphi_s(a,b)\big) = \varphi_s\Big(j(\tau,s)(b), -j(\tau,s)(a)\Big)$. With these considerations, we can state the following lemma:

\begin{lemma}
Fix any trivialization of $T_Fs$ with respect to the standard open cover of $s(\mathbb{CP}^1)$. For any $s \in M$, $\varphi_s(a,b)$ is a tangent vector in $T_sM$ if and only if $b = -j(\tau,s)(a)$.
\end{lemma}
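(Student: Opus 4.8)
The plan is to chain together the Fact on fixed-point submanifolds, the two preceding lemmas, Corollary \ref{induced real and quaternionic structures}, and the defining identity $j(\tau,s)^2 = -\mathrm{id}$ of a quaternionic structure. As already noted in the text, for $s \in M$ a tangent vector $X \in T_s\mathcal{M} \cong H^0(s(\mathbb{CP}^1), T_Fs)$ lies in $T_sM$ exactly when $\mathrm{d}\iota(X) = X$, and by the lemma just proved $\mathrm{d}\iota(X) = r(\tau,s)(X)$. Hence membership in $T_sM$ is equivalent to $X$ being fixed by the real structure $r(\tau,s)$, that is, $r(\tau,s)(X) = X$.

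First I would write $X = \varphi_s(a,b)$ in the fixed trivialization of $T_Fs$, using throughout the compatible trivialization of $\mathcal{O}(-1)$ singled out after Definition \ref{induced real structure and quaternionic structure} so that $j := j(\tau,s)$ is unambiguously defined. Corollary \ref{induced real and quaternionic structures} then gives $r(\tau,s)\big(\varphi_s(a,b)\big) = \varphi_s\big(j(b),-j(a)\big)$. Since $\varphi_s \colon \mathbb{C}^{4n} \to H^0(s(\mathbb{CP}^1),T_Fs)$ is a $\mathbb{C}$-linear isomorphism, the equation $r(\tau,s)(X) = X$ is equivalent to the pair of coefficient equations $a = j(b)$ and $b = -j(a)$.

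Finally I would observe that, because $j$ is a quaternionic structure, $j^2 = -\mathrm{id}$, so these two equations are in fact equivalent to the single equation $b = -j(a)$: applying $j$ to $a = j(b)$ yields $j(a) = j^2(b) = -b$, and conversely applying $j$ to $b = -j(a)$ yields $j(b) = -j^2(a) = a$. Therefore $\varphi_s(a,b) \in T_sM$ if and only if $b = -j(\tau,s)(a)$, which is the claim. The argument is essentially mechanical; the only points demanding attention are to keep the standard-cover trivializations of $T_Fs$ and of $\mathcal{O}(-1)$ consistent so that Corollary \ref{induced real and quaternionic structures} applies verbatim, and to invoke the injectivity of $\varphi_s$ when passing from the section-level equality to the coefficient equations.
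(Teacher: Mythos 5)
Your proposal is correct and follows exactly the route the paper intends: the paper does not write out a separate proof but derives the lemma directly from the fixed-point characterization $T_sM = \{X : \mathrm{d}\iota(X) = X\}$, the identity $\mathrm{d}\iota(X) = r(\tau,s)(X)$, and the formula $r(\tau,s)(\varphi_s(a,b)) = \varphi_s(j(\tau,s)(b), -j(\tau,s)(a))$ stated just before the lemma. Your additional observation that the two coefficient equations $a = j(b)$ and $b = -j(a)$ collapse to one via $j^2 = -\mathrm{id}$ (with the conjugate-linearity of $j$ handled correctly) is exactly the step the paper leaves implicit.
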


For convenience, we will denote $j(s,\tau)$ as $j$ if there is no ambiguity and denote $T_Fs\otimes\mathcal{O}(n)$ as $T_Fs(n)$. The above lemma implies that when $M\neq\emptyset$, it is a smooth submanifold of $\mathcal{M}$ with dimension $4n$ because the tangent space $T_sM$ of any $s \in M$ is a $2n$-dimensional $\mathbb{R}$-linear subspace of $T_s\mathcal{M}$. It is worth noting that $T_sM$ is not a $\mathbb{C}$-linear space of $T_s\mathcal{M}$, and if we want $M$ to be a complex manifold, we must first give a complex structure to $T_sM$. Let us do this as follows:

If $\varphi_s(a,-j(a))$ is a tangent vector in $T_sM$ which vanishes at $\zeta=\zeta_0$, this means that $a=\zeta_0\cdot j(a)$. Therefore, we have 
\[
j(a)=j(\zeta_0\cdot j(a))=-\bar\zeta_0\cdot a=-\|\zeta_0\|^2j(a)
\]
which implies that $j(a)=0$, and thus $a=0$ since $j$ is a bijective map. In summary, if we regard a tangent vector $X$ in $T_sM$ as a holomorphic section in $T_Fs$, then if $X$ vanishes at one point on $s(\mathbb{CP}^1)$, it has to vanish everywhere on $s(\mathbb{CP}^1)$, meaning that $X=0$. With this, we can state the following lemma:

\begin{lemma}
\label{induced complex structure}
For each fiber $\mathcal{Z}_\zeta:=p^{-1}(\zeta)$ of $p:\mathcal{Z}\rightarrow\mathbb{CP}^1$ and any real section $s\in M$, there exists a neighborhood $U$ of $s$ in $M$ such that $U$ is a family of real sections in $\mathcal{Z}$ that intersect $p^{-1}(\zeta)$ at distinct points, provided that $U$ is sufficiently small.
\end{lemma}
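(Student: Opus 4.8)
The plan is to exploit the evaluation map at a point $\zeta$ of $\mathbb{CP}^1$ and show it is a local diffeomorphism along $M$ near $s$. Fix $\zeta\in\mathbb{CP}^1$ and let $z=s(\zeta)\in\mathcal{Z}_\zeta$. Consider the smooth evaluation map $\mathrm{ev}_\zeta\colon M\to\mathcal{Z}_\zeta$ sending $s'\mapsto s'(\zeta)$. First I would compute its differential at $s$: identifying $T_sM$ with the real subspace $\{\varphi_s(a,-j(a)) : a\in\mathbb{C}^{2n}\}\subset H^0(s(\mathbb{CP}^1),T_Fs)$ (the two preceding lemmas), the differential $\mathrm{d}(\mathrm{ev}_\zeta)_s$ is the evaluation of a holomorphic section of $T_Fs$ at the point $s(\zeta)$, landing in $T_z\mathcal{Z}_\zeta\cong (T_Fs)_{s(\zeta)}\cong\mathbb{C}^{2n}$.

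The key step is injectivity of this differential, and that is exactly the content of the computation just carried out in the paragraph preceding the lemma: if $X=\varphi_s(a,-j(a))\in T_sM$ vanishes at the single point $\zeta_0$, then $a=\zeta_0\cdot j(a)$ forces, via $j^2=-\mathrm{id}$ and conjugate-linearity, $j(a)=-\|\zeta_0\|^2 j(a)$, hence $j(a)=0$ and $a=0$, i.e. $X\equiv 0$. (The case $\zeta_0=\infty$ is handled symmetrically using the other trivialization.) Thus $\mathrm{d}(\mathrm{ev}_\zeta)_s\colon T_sM\to T_z\mathcal{Z}_\zeta$ is an injective $\mathbb{R}$-linear map between real vector spaces of the same dimension $4n$ — recall $\dim_{\mathbb{R}}M=4n$ by the remark following the second lemma above, and $\dim_{\mathbb{R}}\mathcal{Z}_\zeta=4n$ since $\mathcal{Z}$ has complex dimension $2n+1$ and the fibers of $p$ have complex dimension $2n$. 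Hence $\mathrm{d}(\mathrm{ev}_\zeta)_s$ is an isomorphism, and by the inverse function theorem $\mathrm{ev}_\zeta$ restricts to a diffeomorphism from some open neighborhood $U$ of $s$ in $M$ onto an open subset of $\mathcal{Z}_\zeta$.

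It remains to read off the statement: injectivity of $\mathrm{ev}_\zeta$ on $U$ means precisely that any two distinct real sections $s_1,s_2\in U$ satisfy $s_1(\zeta)\ne s_2(\zeta)$, i.e. they meet $p^{-1}(\zeta)$ at distinct points, and $U$ is by construction a family of real sections of $\mathcal{Z}$. I would also note that each $s'\in U$ lies in $\mathcal{M}$ — so that ``real section'' makes sense and $T_Fs'\cong\bigoplus^{2n}\mathcal{O}(1)$ — because $\mathcal{M}$ is open in $\widetilde{\mathscr{M}^{4n}_{s(\mathbb{CP}^1)}}$ (Lemma \ref{open submanifold}) and $M$ is the fixed-point set of the involution $\iota$, so shrinking $U$ keeps us inside $M\subset\mathcal{M}$.

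The main obstacle is the clean identification of $\mathrm{d}(\mathrm{ev}_\zeta)_s$ with pointwise evaluation of holomorphic sections of the normal bundle: one must be careful that the identification $T_sM\hookrightarrow T_s\mathcal{M}\cong H^0(s(\mathbb{CP}^1),T_Fs)$ is compatible with the geometric evaluation-at-$\zeta$, keeping track of the canonical local charts of Section 2 and the splitting $T\mathcal{Z}|_{s(\mathbb{CP}^1)}=N\mathcal{Z}|_{s(\mathbb{CP}^1)}\oplus Ts(\mathbb{CP}^1)$ that lets us view $T_z\mathcal{Z}_\zeta$ as the fiber $(T_Fs)_z$. Everything else is the inverse function theorem plus the dimension count and the already-established vanishing argument.
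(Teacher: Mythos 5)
Your proposal is correct and follows the same route the paper intends: the lemma is justified there by the immediately preceding computation showing that a tangent vector $\varphi_s(a,-j(a))\in T_sM$ vanishing at one point of $s(\mathbb{CP}^1)$ must vanish identically, which is precisely the injectivity of $\mathrm{d}(\mathrm{ev}_\zeta)_s$; combined with the equality of real dimensions ($4n$ on both sides) and the inverse function theorem, this yields the local identification of $M$ with the fiber $\mathcal{Z}_\zeta$. Your write-up merely makes explicit the evaluation map, the case $\zeta=\infty$, and the identification $T_{s(\zeta)}\mathcal{Z}_\zeta\cong (T_Fs)_{s(\zeta)}$, all of which are consistent with the paper's setup.
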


For this lemma, $M$ can be locally identified with any one of the fibers in $\{p^{-1}(\zeta)|\zeta\in\mathbb{CP}^1\}$. Therefore, $M$ inherits a complex structure $\mathbf{I}_\zeta$ from the fiber $p^{-1}(\zeta)$ of $\mathcal{Z}$. Thus, we obtain a family of complex structures on $M$ parameterized by $\mathbb{CP}^1$. Furthermore, $\textrm{id}_M:(M,\mathbf{I}_\zeta)\rightarrow(M,\mathbf{I}_{\sigma(\zeta)})$ is an antiholomorphic map. We can also introduce another family of complex structures parameterized by $\mathbb{CP}^1$ on $M$ as follows:

\begin{lemma}
For every element $s \in M$, the tangent space $T_sM$ constitutes a real linear subspace within $T_s\mathcal{M}$. Furthermore, there exists a set of $\mathbb{R}$-linear isomorphisms connecting $T_sM$ with $H^0\big(s(\mathbb{CP}^1),T_Fs(-1)\big)$.
\end{lemma}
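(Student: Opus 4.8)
\emph{Proof plan.} Both statements follow from material already assembled. The first is essentially a restatement of the previous two lemmas; the second is obtained by exhibiting, for each $\zeta\in\mathbb{CP}^1$, an ``evaluation at $\zeta$'' map and showing it is bijective by means of the non-vanishing observation made just before Lemma \ref{induced complex structure}.

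\emph{Step 1: $T_sM$ is an $\mathbb{R}$-linear subspace.} By the preceding lemma, once a trivialization of $T_Fs$ adapted to the standard open cover of $s(\mathbb{CP}^1)$ is fixed, $T_sM=\bigl\{\varphi_s\bigl(a,-j(\tau,s)(a)\bigr):a\in\mathbb{C}^{2n}\bigr\}$ inside $T_s\mathcal{M}\cong H^0\bigl(s(\mathbb{CP}^1),T_Fs\bigr)$. Since $j(\tau,s)$ is conjugate $\mathbb{C}$-linear, the assignment $a\mapsto\varphi_s\bigl(a,-j(\tau,s)(a)\bigr)$ is $\mathbb{R}$-linear and injective, so $T_sM$ is an $\mathbb{R}$-linear subspace of $T_s\mathcal{M}$ of real dimension $4n$; it is not $\mathbb{C}$-linear, as was already noted.

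\emph{Step 2: the family of isomorphisms.} Fix $\zeta\in\mathbb{CP}^1$ and consider the composite of the inclusion $T_sM\hookrightarrow H^0\bigl(s(\mathbb{CP}^1),T_Fs\bigr)$ with evaluation at the point of $s(\mathbb{CP}^1)$ over $\zeta$, an $\mathbb{R}$-linear map $T_sM\to (T_Fs)_{s(\zeta)}$. Because $T_Fs\cong\bigoplus^{2n}\mathcal{O}(1)$, the bundle $T_Fs(-1)$ is trivial, so restriction to the fibre over $s(\zeta)$ is a canonical isomorphism $H^0\bigl(s(\mathbb{CP}^1),T_Fs(-1)\bigr)\cong\bigl(T_Fs(-1)\bigr)_{s(\zeta)}$; combining this with the trivialization of $\mathcal{O}(-1)$ fixed after Definition \ref{induced real structure and quaternionic structure}, over a chart of $\mathbb{CP}^1$ containing $\zeta$, identifies $(T_Fs)_{s(\zeta)}$ with $H^0\bigl(s(\mathbb{CP}^1),T_Fs(-1)\bigr)\cong\mathbb{C}^{2n}$. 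Let $\Phi_\zeta\colon T_sM\to H^0\bigl(s(\mathbb{CP}^1),T_Fs(-1)\bigr)$ be the resulting $\mathbb{R}$-linear map; in the coordinates of Step 1 it sends $\varphi_s\bigl(a,-j(\tau,s)(a)\bigr)$ to the value of that section at $\zeta$, read off in the chosen trivializations. The two sides being $\mathbb{R}$-vector spaces of the same dimension $4n$, it suffices to check that $\Phi_\zeta$ is injective. If $X\in T_sM$ and $\Phi_\zeta(X)=0$ then $X$, viewed as a holomorphic section of $T_Fs$, vanishes at $s(\zeta)$, and the computation preceding Lemma \ref{induced complex structure} — writing $X=\varphi_s\bigl(a,-j(\tau,s)(a)\bigr)$, vanishing at $\zeta=\zeta_0$ gives $a=\zeta_0\cdot j(\tau,s)(a)$, hence $j(\tau,s)(a)=-\|\zeta_0\|^2\, j(\tau,s)(a)$, hence $a=0$ — forces $X=0$. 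Thus each $\Phi_\zeta$ is an $\mathbb{R}$-linear isomorphism, and $\{\Phi_\zeta\}_{\zeta\in\mathbb{CP}^1}$ is the asserted set of isomorphisms.

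\emph{Expected difficulty.} There is no substantial obstacle here: the crux is the non-vanishing of nonzero elements of $T_sM$, which is already in hand. The care required is purely organizational — the identification $(T_Fs)_{s(\zeta)}\cong H^0\bigl(s(\mathbb{CP}^1),T_Fs(-1)\bigr)$ is not canonical but depends on the trivialization of $\mathcal{O}(-1)$ (cf.\ Remark \ref{trivialization of O(-1)}), which must therefore be kept fixed, after which $\Phi_\zeta$ is seen to be independent of the auxiliary trivialization of $T_Fs$; and one should record that $\zeta\mapsto\Phi_\zeta$ is real-analytic, which is what will allow $\{\Phi_\zeta\}$ to be used in the next step to pull the complex structure of $H^0\bigl(s(\mathbb{CP}^1),T_Fs(-1)\bigr)$ back to a $\mathbb{CP}^1$-family of complex structures on $M$.
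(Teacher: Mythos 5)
Your proposal is correct and is essentially the paper's own argument: in the fixed trivializations your evaluation map $\Phi_\zeta$ is exactly the paper's $\Phi_{\alpha,\beta}$ with $(\alpha,\beta)=(1,\zeta)$ (the paper allows the slightly larger family indexed by $(\alpha,\beta)\in\mathbb{C}^2\setminus\{0\}$, i.e.\ $\varphi_s(a,-j(a))\mapsto\alpha a-\beta j(a)$), and your injectivity computation $a=\zeta_0\, j(a)\Rightarrow a=0$ is the same quaternionic-structure cancellation the paper uses, followed by the same real-dimension count. The only cosmetic difference is the geometric packaging via evaluation at $s(\zeta)$ and triviality of $T_Fs(-1)$ versus the paper's direct coordinate formula; both hinge on the same non-vanishing fact and the fixed trivialization of $\mathcal{O}(-1)$.
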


\begin{proof}
After fixing a trivialization of $T_Fs$ with respect to the standard open cover of $\mathbb{CP}^1$, we can define a $\mathbb{R}$-linear map, denoted as $\Phi_{\alpha,\beta}: T_sM \rightarrow H^0\big(s(\mathbb{CP}^1),T_F(-1)\big)$, for constants $(\alpha,\beta) \in \mathbb{C}^2 \setminus \{0\}$, as follows:
\[
\Phi_{\alpha,\beta}:\varphi_s\Big(a,-j(\tau,s)(a)\Big) \mapsto \alpha\cdot a - \beta\cdot j(\tau,s)(a) \in \mathbb{C}^{2n}
\]
Firstly, the map $\Phi_{\alpha,\beta}$ is an isomorphism. This is demonstrated by the fact that if $\Phi_{\alpha,\beta}$ sends $\varphi_s\big(a,-j(\tau,s)(a)\big)$ to zero, then it must be that $\alpha\cdot a = \beta\cdot j(\tau,s)(a)$. Applying $j(\tau,s)$ to both sides of the equation yields $\bar\alpha\cdot j(\tau,s)(a) = -\bar\beta\cdot a$, leading to:
\[
|\alpha|^2\cdot a=\bar \alpha\beta\cdot j(\tau,s)(a)=-|\beta|^2\cdot a
\]
which implies that $a=0$, i.e., $\varphi_s\big(a,-j(\tau,s)(a)\big)$ is the zero section. Moreover, both $T_sM$ and $H^0\big(s(\mathbb{CP}^1), T_F(-1)\big)$ have the same real dimension of $4n$. Secondly, the map $\Phi_{\alpha,\beta}$ is independent of the choice of trivialization on $T_Fs$. 
\end{proof}

\begin{remark}
Up to now, we have established an equivalence between $T_sM$ and $H^0\big(s(\mathbb{CP}^1), T_F(-1)\big)$ induced from the map $\Phi_{\alpha,\beta}$. Furthermore, $\Phi_{\alpha,\beta}$ endows $T_sM$ with a complex structure derived from the complex vector space $H^0\big(s(\mathbb{CP}^1), T_F(-1)\big)$. This is a family of complex structures on $M$ parameterized by $[\alpha:\beta]\in\mathbb{CP}^1$.
\end{remark}

Lemma \ref{induced complex structure} indicates that $T_sM$ inherits a complex structure from each fiber $\mathcal{Z}_\zeta$. We will clarify that the complex structures on $T_sM$ inherited from $\mathcal{Z}_\zeta$ correspond precisely to those complex structures on $T_sM$ induced from $H^0\big(s(\mathbb{CP}^1), T_F(-1)\big)$ by $\Phi_{\alpha,\beta}$. To elucidate this fact, we first need to establish a trivialization of $T_Fs$.

Consider the standard open cover $U_0 \cup U_1$ of $\mathbb{CP}^1$, where each $U_i$ is isomorphic to $\mathbb{C}$ with a local coordinate $\zeta_i$ for $i = 0,1$. There exists a corresponding open cover $\mathscr{U}_0 \cup \mathscr{U}_1$ of $s(\mathbb{CP}^1)$ within $\mathcal{Z}$. Each $\mathscr{U}_i$ can be represented as $\mathbb{B}_i \times U_i$, with $\mathbb{B}_i$ being a neighborhood of zero in $\mathbb{C}^{2n}$. Without loss of generality, we can assume that $s(\mathbb{CP}^1) \cap \mathscr{U}_i$ corresponds to zero in $\mathbb{B}_i$. Since $T_Fs\cong\bigoplus^{2n}\mathcal{O}(1)$, we claim that there exist coordinates $z_i:=(z^\lambda_i)_{\lambda=1,\ldots,2n}$ on $\mathbb{B}_i$ such that 
\[
\left.\frac{\partial}{\partial z_0^\lambda}\right|_{z_0=0}=\zeta_1\cdot\left.\frac{\partial}{\partial z_1^\lambda}\right|_{z_1=0} \text{for all}\quad \lambda=1,\ldots,2n. 
\]
We will demonstrate this fact later. Consequently, we have the following trivialization of $T_Fs$:

\[
\begin{cases}
U_0 \times \mathbb{C}^{2n} \rightarrow T_Fs &
(\zeta_0, a_0^1, \cdots, a_0^{2n}) \mapsto \sum\limits_{\lambda=1}^{2n} a_0^\lambda \left.\frac{\partial}{\partial z_0^\lambda}\right|_{s(\zeta_0)}\\
U_1 \times \mathbb{C}^{2n} \rightarrow T_Fs &
(\zeta_1, a_1^1, \cdots, a_1^{2n}) \mapsto \sum\limits_{\lambda=1}^{2n} a_1^\lambda \left.\frac{\partial}{\partial z_1^\lambda}\right|_{s(\zeta_1)}
\end{cases}
\]

This particular trivialization of $T_Fs$ will be used throughout the remainder of this paper. Consider a tangent vector $X_s^a:=\varphi_s(a,-j(\tau,s))$ in $T_sM$ for a real section $s$ under the trivialization we defined above. In detail, we can write $X_s^a$ as:
\begin{align}
\begin{cases}
\label{bundle trivialization}
X_s^a|_{s(\zeta_0)}=\sum\limits_{\lambda=1}^{2n}(a^\lambda-\zeta_0\big(j(\tau,s)(a)\big)^\lambda)\left.\frac{\partial}{\partial z_0^\lambda}\right|_{s(\zeta_0)}\\
X_s^a|_{s(\zeta_1)}=\sum\limits_{\lambda=1}^{2n}(\zeta_1a^\lambda-\big(j(\tau,s)(a)\big)^\lambda)\left.\frac{\partial}{\partial z_1^\lambda}\right|_{s(\zeta_1)}
\end{cases}
\end{align}

Let us consider the complex structure $\mathbf{I}_{\zeta_0}$ on $T_sM$, which is inherited from $\mathcal{Z}_{\zeta_0}$ for some $\zeta_0 \in U_0$. This complex structure sends $X_s^a|_{s(\zeta_0)}$ to $\sqrt{-1}\cdot X_s^a|_{s(\zeta_0)}$, and in detail:

\[
\sum\limits_{\lambda=1}^{2n}\Big(a^\lambda - \zeta_0\big(j(\tau,s)(a)\big)^\lambda\Big)\left.\frac{\partial}{\partial z_0^\lambda}\right|_{s(\zeta_0)}\mapsto \sqrt{-1} \cdot \left(\sum\limits_{\lambda=1}^{2n}\Big(a^\lambda - \zeta_0\big(j(\tau,s)(a)\big)^\lambda\Big)\left.\frac{\partial}{\partial z_0^\lambda}\right|_{s(\zeta_0)}\right).
\]
Therefore, $\mathbf{I}_{\zeta_0}$ maps $X_s^a$ to $X_s^b$, where $b\in\mathbb{C}^{2n}$ satisfies the equation:
\[
b - \zeta_0 \cdot j(\tau,s)(b) = \sqrt{-1} \cdot \big(a - \zeta_0 \cdot j(\tau,s)(a)\big).
\]
In fact, we can solve this equation and obtain:
\[
b = \frac{\sqrt{-1}(1-|\zeta_0|^2)}{1+|\zeta_0|^2}a - \frac{2\sqrt{-1}\zeta_0}{1+|\zeta_0|^2}j(a).
\]

On the other hand, let 
\[
\alpha=\frac{1-|\zeta_0|^2}{1+|\zeta_0|^2}\text{ and }\beta=\frac{2\zeta_0}{1+|\zeta_0|^2}.
\] 
then, the complex structure induced by $\Phi_{\alpha,\beta}$ from $H^0\big(s(\mathbb{CP}^1),T_Fs(-1)\big)$ is just the complex structure inherited from $\mathcal{Z}_{\zeta_0}$. Up to this point, we have successfully obtained a hypercomplex structure on $M$, as discussed in the following proposition. Notably, Joyce has also explored hypercomplex structures in his book \cite[Section 7.5]{joyce2000compact}.

\begin{theorem}
\label{hypercomplex}
Consider a complex manifold $\mathcal{Z}$ equipped with an antipodal map $\sigma$ over the projective line $\mathbb{CP}^1$. We impose the following conditions for $\mathcal{Z}$ to facilitate our study:
\begin{enumerate}
    \item The manifold $\mathcal{Z}$ must be structured as a holomorphic fiber bundle over $\mathbb{CP}^1$, formally represented by the projection $p: \mathcal{Z} \rightarrow \mathbb{CP}^1$.
    \item It is necessary for $\mathcal{Z}$ to possess a real structure $\tau$, which aligns with the antipodal map $\sigma$, ensuring compatibility across the manifold.
    \item The existence of a holomorphic section $s$ is crucial, where the section's normal bundle aligns with $T_Fs \cong \bigoplus^{2n}\mathcal{O}(1)$, highlighting the manifold's dimensional complexity.
\end{enumerate}
Given these stipulations, let $M$ denote the collection of real holomorphic sections within $\mathcal{M}$. Should $M$ be non-empty, it inherently forms a smooth submanifold of $\mathcal{M}$, further endowed with a hypercomplex structure.
\end{theorem}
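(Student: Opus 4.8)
The plan is to assemble the hypercomplex structure on $M$ out of the fibrewise complex structures $\mathbf{I}_\zeta$ analysed above, using the real structure $\tau$ only to cut out $M$ inside $\mathcal{M}$ and, pointwise, to produce the quaternionic relations.

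First I would fix the underlying smooth picture. By Proposition \ref{first step of section4}, $\mathcal{M}$ is a complex manifold of complex dimension $4n$, and the map $\iota\colon\mathcal{M}\to\mathcal{M}$, $s\mapsto\overline{\tau\circ s\circ\sigma^{-1}}$, is a smooth involution whose fixed-point set is exactly $M$; since the fixed-point set of a smooth involution is a smooth submanifold with tangent space the fixed subspace of the differential, $M$ is a smooth submanifold of $\mathcal{M}$ and $T_sM=\mathrm{Fix}(\mathrm{d}\iota_s)$. As $\mathrm{d}\iota_s=r(\tau,s)$ is a conjugate-linear involution of $T_s\mathcal{M}\cong\mathbb{C}^{4n}$, its fixed subspace has real dimension $4n$; concretely, in the fixed trivialization of $T_Fs$ one has $T_sM=\{X_s^a:=\varphi_s(a,-j(\tau,s)(a)):a\in\mathbb{C}^{2n}\}$, so $a\mapsto X_s^a$ is an $\mathbb{R}$-linear isomorphism $\mathbb{C}^{2n}\to T_sM$ (here $r(\tau,s)$, $j(\tau,s)$ are as in Definition \ref{induced real structure and quaternionic structure}).

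Next I would build the complex structures. For $\zeta\in\mathbb{CP}^1$ let $\mathrm{ev}_\zeta\colon M\to\mathcal{Z}_\zeta:=p^{-1}(\zeta)$, $s\mapsto s(\zeta)$, which is smooth, being the restriction to $M$ of a holomorphic map $\mathcal{M}\to\mathcal{Z}_\zeta$. The argument preceding Lemma \ref{induced complex structure} shows that a tangent vector in $T_sM$, seen as a holomorphic section of $T_Fs$, vanishing at one point of $s(\mathbb{CP}^1)$ vanishes identically, so $\mathrm{d}(\mathrm{ev}_\zeta)_s$ is injective; as $\dim_{\mathbb{R}}T_sM=4n=\dim_{\mathbb{R}}\mathcal{Z}_\zeta$, it is an isomorphism, hence $\mathrm{ev}_\zeta$ is a local diffeomorphism onto an open subset of the complex manifold $\mathcal{Z}_\zeta$. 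Therefore $\mathbf{I}_\zeta:=(\mathrm{ev}_\zeta)^{*}J_{\mathcal{Z}_\zeta}$ is a well-defined smooth tensor field on $M$, and being the pullback of an integrable almost complex structure along a local biholomorphism it is an integrable complex structure (this is the structure $\mathbf{I}_\zeta$ of Lemma \ref{induced complex structure}). I would then put $\mathbf{I}:=\mathbf{I}_{0}$, $\mathbf{J}:=\mathbf{I}_{\sqrt{-1}}$, $\mathbf{K}:=\mathbf{I}_{-1}$ and check the quaternionic relations pointwise on each $T_sM$. Specializing the identity $\mathbf{I}_{\zeta_0}(X_s^a)=X_s^{\,b}$, $b=\frac{\sqrt{-1}(1-|\zeta_0|^2)}{1+|\zeta_0|^2}a-\frac{2\sqrt{-1}\,\zeta_0}{1+|\zeta_0|^2}j(\tau,s)(a)$, established just before this theorem, to $\zeta_0=0,\sqrt{-1},-1$ gives $\mathbf{I}(X_s^a)=X_s^{\sqrt{-1}a}$, $\mathbf{J}(X_s^a)=X_s^{j(\tau,s)(a)}$, $\mathbf{K}(X_s^a)=X_s^{\sqrt{-1}j(\tau,s)(a)}$. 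Since $j(\tau,s)$ is conjugate linear with $j(\tau,s)^2=-\mathrm{id}$, these three endomorphisms of $\mathbb{C}^{2n}$ satisfy $\mathbf{I}^2=\mathbf{J}^2=\mathbf{K}^2=-\mathrm{id}$ and $\mathbf{I}\circ\mathbf{J}=-\mathbf{J}\circ\mathbf{I}=\mathbf{K}$ (cf.\ the paragraph following Definition \ref{real structure and quaternionic structure}); transporting along $a\mapsto X_s^a$, the same relations hold on every $T_sM$, so $(M,\mathbf{I},\mathbf{J},\mathbf{K})$ is hypercomplex.

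The step I expect to be the main obstacle is verifying that each $\mathbf{I}_\zeta$ is a bona fide global, smooth, integrable complex structure rather than merely a collection of pointwise data: one has to see that $\mathrm{ev}_\zeta$ is a local diffeomorphism on all of $M$ — which is exactly the place where the realness of the sections is indispensable, since evaluation at $\zeta$ degenerates at a general point of $\mathcal{M}$ — and that pulling back $J_{\mathcal{Z}_\zeta}$ is independent of the chart used on $\mathcal{Z}_\zeta$. Once this is settled, everything remaining is the routine linear algebra of a quaternionic structure on $\mathbb{C}^{2n}$.
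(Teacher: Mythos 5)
Your proposal is correct and follows essentially the same route as the paper: cut out $M$ as the fixed-point set of the involution $\iota$, identify $T_sM$ with $\{\varphi_s(a,-j(\tau,s)(a))\}$, and obtain $\mathbf{I},\mathbf{J},\mathbf{K}$ from the fibrewise complex structures, checking the quaternionic relations via $j(\tau,s)$ being conjugate-linear with square $-\mathrm{id}$. The only (cosmetic) divergence is that you realize all three structures directly as $\mathbf{I}_{0},\mathbf{I}_{\sqrt{-1}},\mathbf{I}_{-1}$ using the displayed formula for $b$ — which also gives integrability of $\mathbf{K}$ for free — whereas the paper sets $\mathbf{K}:=\mathbf{I}\circ\mathbf{J}$ and attributes $\mathbf{J}=\mathbf{I}_{1,i}$ to the fiber over $\sqrt{-1}(-1\pm\sqrt{2})$ via its $(\alpha,\beta)$ dictionary; your attribution to the fiber over $\sqrt{-1}$ is the one consistent with that formula, and in either case the resulting endomorphisms of $T_sM$ coincide.
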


\begin{proof}
Let $s$ be a real section of $\mathcal{Z}$, and with respect to the trivialization of $T_Fs$ mentioned above, we can write any tangent vector in $T_sM$ in the form $\varphi_s(a,-j(a))$ for some $a$ in $\mathbb{C}^{2n}$. Let $\mathbf{I}_{\alpha,\beta}$ be a family of complex structures on $M$ induced by $\Phi_{\alpha,\beta}:\varphi_s\big(a,-j(a)\big) \mapsto \alpha\cdot a-\beta\cdot j(a) $, in more detail, $\mathbf{I}_{\alpha,\beta}$ sends $\varphi_s(a,-j(a))$ to $\varphi_s(b,-j(b))$ for some $b$ in $\mathbb{C}^{2n}$ such that
\[
\alpha\cdot b-\beta\cdot j(b)=\sqrt{-1}\cdot\big(\alpha\cdot a-\beta\cdot j(a)\big)
\]
We can solve this equation for certain pairs $(\alpha,\beta)$ and obtain
\[
\begin{cases}
\mathbf{I}_{1,0}:\varphi_s\big(a,-j(a)\big)\mapsto\varphi_s\big(\sqrt{-1}\cdot a,-j(\sqrt{-1}\cdot a)\big)\\
\mathbf{I}_{1,i}:\varphi_s\big(a,-j(a)\big)\mapsto\varphi_s\big(j(a),a\big)
\end{cases}
\]
Denote $\mathbf{I}:=\mathbf{I}_{1,0}$, $\mathbf{J}:=\mathbf{I}_{1,i}$, and $\mathbf{K}:=\mathbf{I}\circ\mathbf{J}$. We note that $\mathbf{I}$ is the complex structure on $M$ inherited from the fiber $\mathcal{Z}_0$ and $\mathbf{J}$ is the complex structure on $M$ inherited from the fiber $\mathcal{Z}_{\sqrt{-1}(-1\pm\sqrt{2})}$. Then, $\mathbf{I},\mathbf{J},\mathbf{K}$ are three complex structures on $M$ and can check that $\mathbf{K}=\mathbf{I}\circ\mathbf{J}=-\mathbf{J}\circ\mathbf{I}$ Then $(M,\mathbf{I},\mathbf{J},\mathbf{K})$ is a hypercomplex manifold.
\end{proof}
To complete the proof of Theorem \ref{hypercomplex}, it is essential to calculate the local charts on $\mathcal{Z}$ in the vicinity of point $s$.

\begin{lemma}
Coordinates $z_i := (z^\lambda_i)_{\lambda=1,\cdots,2n}$ can be selected on $\mathbb{B}_i$ such that $\left.\frac{\partial}{\partial z_0^\lambda}\right|_{z_0=0} = \zeta_1\cdot\left.\frac{\partial}{\partial z_1^\lambda}\right|_{z_1=0}$ holds for every $\lambda=1,\cdots,2n$.
\end{lemma}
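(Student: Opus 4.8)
The plan is to show that the coordinate vector fields $\partial/\partial z_i^\lambda$ already give a holomorphic trivialization of the normal bundle $T_Fs=N\mathcal{Z}|_{s(\mathbb{CP}^1)}$ over the standard cover, with a well-defined holomorphic $GL(2n,\mathbb{C})$-valued transition matrix, and then to absorb that matrix into a fibrewise-linear change of the $z_i$ so that the transition becomes the standard clutching $\zeta_1\,\mathcal{I}_{2n}$ of $\bigoplus^{2n}\mathcal{O}(1)$.

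First I would record the transition behaviour of the coordinate frames. On $\mathscr{U}_i=\mathbb{B}_i\times U_i$ the fields $\partial/\partial z_i^\lambda$ lie in $\ker\mathrm{d}p$, so their restrictions $e_{i,\lambda}:=\partial/\partial z_i^\lambda|_{z_i=0}$ are local holomorphic frames of $T_Fs$ over $U_i$ (identifying $s(\mathbb{CP}^1)$ with $\mathbb{CP}^1$ via $p$). The transition functions of $\mathcal{Z}$ on $\mathscr{U}_0\cap\mathscr{U}_1$ are $\zeta_1=\zeta_0^{-1}$ together with $z_1^\mu=f^\mu(z_0,\zeta_0)$, and $f^\mu(0,\zeta_0)=0$ since $s(\mathbb{CP}^1)\cap\mathscr{U}_0=\{z_0=0\}$ is carried into $s(\mathbb{CP}^1)\cap\mathscr{U}_1=\{z_1=0\}$. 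Because $\zeta_1$ does not involve $z_0$, the chain rule gives $\partial/\partial z_0^\lambda=\sum_\mu(\partial f^\mu/\partial z_0^\lambda)\,\partial/\partial z_1^\mu$ with \emph{no} $\partial/\partial\zeta_1$ term; restricting to $\{z_0=0\}$ yields $e_{0,\lambda}=\sum_\mu F^\mu_\lambda(\zeta_0)\,e_{1,\mu}$ with $F^\mu_\lambda(\zeta_0):=(\partial f^\mu/\partial z_0^\lambda)(0,\zeta_0)$, a holomorphic map $U_0\cap U_1\cong\mathbb{C}^\ast\to GL(2n,\mathbb{C})$. The vanishing of this mixed derivative is the only non-formal point of the argument.

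Next I would use the hypothesis $T_Fs\cong\bigoplus^{2n}\mathcal{O}(1)$: fix an isomorphism $\Theta\colon\bigoplus^{2n}\mathcal{O}(1)\xrightarrow{\ \sim\ }T_Fs$. In the standard trivialization the basis frames $\epsilon_{i,\lambda}$ of $\bigoplus^{2n}\mathcal{O}(1)$ satisfy $\epsilon_{0,\lambda}=\zeta_1\,\epsilon_{1,\lambda}$ on the overlap (immediate from $x_0^\alpha=\zeta_0 x_1^\alpha$), so $\eta_{i,\lambda}:=\Theta(\epsilon_{i,\lambda})$ are holomorphic frames of $T_Fs$ over $U_i$ with $\eta_{0,\lambda}=\zeta_1\,\eta_{1,\lambda}$ by $\mathcal{O}$-linearity of $\Theta$. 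Writing $\eta_{i,\lambda}=\sum_\mu(G_i(\zeta_i))^\mu_\lambda\,e_{i,\mu}$ defines holomorphic $G_i\colon U_i\to GL(2n,\mathbb{C})$. Finally I would set $\tilde z_i:=G_i(\zeta_i)^{-1}z_i$ on $\mathscr{U}_i$ (replacing $\mathbb{B}_i$ by the image neighbourhood if product form is wanted); this is holomorphic and fibrewise linear over $U_i$, commutes with $p$, and preserves $\{z_i=0\}=s(\mathbb{CP}^1)\cap\mathscr{U}_i$. Since $\zeta_i$ is untouched, $\partial/\partial\tilde z_i^\lambda=\sum_\mu(\partial z_i^\mu/\partial\tilde z_i^\lambda)\,\partial/\partial z_i^\mu$ with Jacobian $\partial z_i/\partial\tilde z_i=G_i(\zeta_i)$, hence $\partial/\partial\tilde z_i^\lambda|_{z_i=0}=\sum_\mu(G_i)^\mu_\lambda\,e_{i,\mu}=\eta_{i,\lambda}$, and therefore $\partial/\partial\tilde z_0^\lambda|_{z_0=0}=\eta_{0,\lambda}=\zeta_1\,\eta_{1,\lambda}=\zeta_1\,\partial/\partial\tilde z_1^\lambda|_{z_1=0}$; renaming $\tilde z_i$ back to $z_i$ gives the claim. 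The only thing to check carefully is that the new coordinates are honestly defined on a neighbourhood of $s(\mathbb{CP}^1)$, which is automatic because $G_i$ is $GL$-valued, so no obstacle arises beyond the observation of the second paragraph.
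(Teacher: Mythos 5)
Your proposal is correct and follows essentially the same route as the paper: both arguments read off the frame transition matrix $F^\mu_\lambda(\zeta_0)=\left.\partial f^\mu/\partial z_0^\lambda\right|_{z_0=0}$ of $T_Fs$ from the bundle transition functions, use the isomorphism $T_Fs\cong\bigoplus^{2n}\mathcal{O}(1)$ to produce holomorphic $GL(2n,\mathbb{C})$-valued functions on $U_0,U_1$ conjugating $F$ to the standard clutching matrix, and then absorb these into a fibrewise-linear, $\zeta$-dependent change of the vertical coordinates. Your explicit remark that the chain rule produces no $\partial/\partial\zeta_1$ term (so the coordinate fields genuinely frame $\ker\mathrm{d}p$) is the same implicit justification underlying the paper's computation of $F$, so there is no substantive difference.
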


\begin{proof}
Select any local coordinates $(z_i^1,\cdots, z_i^{2n},\zeta_i)$ on $\mathcal{Z}$ where $\mathscr{U}_i$ serves as the local chart for $i = 0,1$. Define $z_i=(z_i^1,\cdots,z_i^{2n})$ for $i = 0,1$ and, without loss of generality, assume $s(\mathbb{CP}^1) \cap \mathscr{U}_i = \{z_i = 0\}$ for each $i$. The transition functions between $\mathscr{U}_0$ and $\mathscr{U}_1$ are:

\[
\begin{cases}
\zeta_0 \cdot \zeta_1 = 1 & \text{on } U_0 \cap U_1 \cong \mathbb{C}^*\\
z_1^\lambda = f^\lambda(z_0,\zeta_0) & \text{for } \lambda = 1,\cdots, 2n
\end{cases}
\]

Observe that $f^\lambda(0,\zeta_0)=0$. Define $\mathscr{V}_i = \mathscr{U}_i \cap s(\mathbb{CP}^1)$ for $i = 0,1$. The transition matrix of $T_Fs$ from $\mathscr{V}_0$ to $\mathscr{V}_1$ is:

\[
F := \left.\left(
\begin{array}{ccc}
\frac{\partial f^1}{\partial z_0^1} & \cdots & \frac{\partial f^1}{\partial z_0^{2n}}\\
\vdots & \vdots & \vdots\\
\frac{\partial f^{2n}}{\partial z_0^1} & \cdots & \frac{\partial f^{2n}}{\partial z_0^{2n}}
\end{array}
\right)\right|_{z_0=0}
\]
Since $T_Fs\cong\bigoplus^{2n}\mathcal{O}(1)$, there exist matrix-valued holomorphic functions $A_0:\mathscr{V}_0\rightarrow GL(2n,\mathbb{C})$ and $A_1:\mathscr{V}_1\rightarrow GL(2n,\mathbb{C})$ such that:
\[
A_1^{-1}(\zeta_0^{-1})\cdot F\cdot A_0(\zeta_0)=\left(
\begin{array}{ccc}
\zeta_0^{-1} & &\\
& \ddots &\\
& & \zeta_0^{-1}
\end{array}
\right)
\]
Let $(w_i,\eta_i) := (A_i(\zeta_i) \cdot z_i,\zeta_i)$ be new coordinates on $\mathscr{U}_i$ for $i=0,1$. The corresponding transition functions between $\mathscr{U}_0$ and $\mathscr{U}_1$ are:
\[
\begin{cases}
\eta_0 \cdot \eta_1 = 1 \\
w_1^\lambda = g^\lambda(w_0,\eta_0) := \left(A^{-1}(\eta_1)\right)_\mu^\lambda\cdot f^\mu(A_0(\eta_0)\cdot w_0,\eta_0)
\end{cases}
\]
The transition matrix $T_Fs$ from $\mathscr{V}_0$ to $\mathscr{V}_1$ is:
\[
G := \left.\left(\begin{array}{ccc}
\frac{\partial g^1}{\partial w_0^1} & \cdots & \frac{\partial g^1}{\partial w_0^{2n}}\\
\vdots & \vdots & \vdots\\
\frac{\partial g^{2n}}{\partial w_0^1} & \cdots & \frac{\partial g^{2n}}{\partial w_0^{2n}}
\end{array}\right)\right|_{w_0=0}=A_1^{-1}(\zeta_0^{-1})\cdot F\cdot A_0(\zeta_0)
\]
The refined transition functions between $\mathscr{U}_0$ and $\mathscr{U}_1$ are thus:

\[
\begin{cases}
\eta_0 \cdot \eta_1 = 1 & \text{where } \eta_i\in \mathscr{V}_0\cap\mathscr{V}_1\\
w_1^\lambda = \eta_0^{-1}\cdot w_0^\lambda + h(w_0,\eta_0)
\end{cases}
\]
Here, $h^\lambda(0,\eta_0)=\left.\frac{\partial h^\lambda}{\partial w_0^\mu}\right|_{w_0=0}=0$ for each $\lambda,\mu=1,\cdots, 2n$. Consequently, $\left.\frac{\partial}{\partial w_0^\lambda}\right|_{w_0=0}=\eta_1\cdot\left.\frac{\partial}{\partial w_1^\lambda}\right|_{w_1=0}$ is valid for every $\lambda=1,\cdots,2n$.
\end{proof}

Finally, we establish the central theorem of this study, initially put forth by Hitchin, Karlhede, Lindstr{\"o}m, and Ro{\v{c}}ek in their work \cite{hitchin1987hyperkahler}.

\begin{theorem}
\label{main section4}
Let $\mathcal{Z}$ be a complex manifold and $\sigma$ be the antipodal map over $\mathbb{CP}^1$. Suppose that:
\begin{enumerate}
    \item $\mathcal{Z}$ is a holomorphic fiber bundle over $\mathbb{CP}^1$, denoted by $p: \mathcal{Z} \rightarrow \mathbb{CP}^1$.
    \item There exists a real structure $\tau$ on $\mathcal{Z}$ that is compatible with $\sigma$.
    \item $\mathcal{Z}$ admits a family $\mathcal{M}$ of holomorphic sections, where the normal bundle $T_Fs$ is isomorphic to $\bigoplus^{2n}\mathcal{O}(1)$ for each $s \in \mathcal{M}$.
    \item There exists a holomorphic section $\omega$ of $\Lambda^2{(N\mathcal{Z})}^*\otimes p^*\mathcal{O}(2)$ such that:
    \begin{enumerate}
        \item $\omega$ defines a symplectic form on each fiber of $p: \mathcal{Z} \rightarrow \mathbb{CP}^1$.
        \item For each real section $s \in \mathcal{M}$, $\omega(\cdot, j(\tau, s)\cdot)$ is a negative definite quadratic form on $H^0\left(s(\mathbb{CP}^1), T_Fs\otimes\mathcal{O}(-1)\right) \cong \mathbb{C}^{2n}$.
    \end{enumerate}
\end{enumerate}
Let $M$ be the set of real holomorphic sections in $\mathcal{M}$. Under these conditions, if $M\neq\emptyset$, then it is a smooth submanifold of $\mathcal{M}$. Additionally, there exists a hyperk\"ahler structure on $M$.
\end{theorem}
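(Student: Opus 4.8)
The plan is to reduce the statement to Theorem~\ref{hypercomplex}. Under conditions (1), (2), (3) that theorem already exhibits $M\subset\mathcal{M}$ as a smooth submanifold carrying a hypercomplex triple $(\mathbf{I},\mathbf{J},\mathbf{K})$ and, more, the whole $\mathbb{CP}^1$-family of complex structures $\{\mathbf{I}_\zeta\}$ obtained from the local identifications $M\cong\mathcal{Z}_\zeta$, $s'\mapsto s'(\zeta)$, of Lemma~\ref{induced complex structure}. Thus the only remaining task is to use condition (4) to produce a Riemannian metric $g$ on $M$ that is Kähler for every $\mathbf{I}_\zeta$ with each $\mathbf{I}_\zeta$ parallel for the Levi-Civita connection, i.e.\ that $(M,g,\mathbf{I},\mathbf{J},\mathbf{K})$ is hyperkähler.

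First I would convert the fibrewise symplectic form into a constant pairing. For $s\in\mathcal{M}$, feeding two global sections of $T_Fs\otimes\mathcal{O}(-1)$ into $\omega|_{s(\mathbb{CP}^1)}\in H^0\big(s(\mathbb{CP}^1),\Lambda^2(T_Fs)^*\otimes p^*\mathcal{O}(2)\big)$ yields a global section of $\mathcal{O}(2)\otimes\mathcal{O}(-1)\otimes\mathcal{O}(-1)=\mathcal{O}$ over $\mathbb{CP}^1$, hence a constant; this is an antisymmetric $\mathbb{C}$-bilinear form $\omega_s$ on $E_s:=H^0\big(s(\mathbb{CP}^1),T_Fs(-1)\big)\cong\mathbb{C}^{2n}$, nondegenerate by (4a). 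In the trivialization of $T_Fs$ fixed in Section~4 a tangent vector $\varphi_s(a,b)$ of $\mathcal{M}$ has value $a+\zeta_0 b$ at $\zeta_0\in U_0$, so evaluating $\omega$ along $s$ on the pair $\varphi_s(a,b),\varphi_s(c,d)$ produces the $\mathcal{O}(2)$-valued quadratic polynomial
\[
\zeta_0\longmapsto\omega_s\big(a+\zeta_0 b,\,c+\zeta_0 d\big)=\omega_s(a,c)+\zeta_0\big(\omega_s(a,d)+\omega_s(b,c)\big)+\zeta_0^2\,\omega_s(b,d),
\]
which in the notation of item~(ii) of the introduction is $(\omega_2+i\omega_3)+2\zeta_0\,\omega_1-\zeta_0^2(\omega_2-i\omega_3)$; its three coefficients are three $2$-forms on $\mathcal{M}$ which I would restrict to $M$ by imposing $b=-j(\tau,s)(a)$ and $d=-j(\tau,s)(c)$.

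Second, condition (4b) says $(x,y)\mapsto-\omega_s\big(x,j(\tau,s)y\big)$ is a positive-definite Hermitian form $h_s$ on $E_s$ — Hermitian symmetry being automatic from the reality of its diagonal values, which is built into the hypothesis. I would transport $h_s$ to $T_sM$ through the $\mathbb{R}$-linear isomorphism $\Phi_{1,0}\colon\varphi_s(a,-j(\tau,s)(a))\mapsto a$ of Section~4, which by Theorem~\ref{hypercomplex} intertwines $\mathbf{I}$ with multiplication by $\sqrt{-1}$ and $\mathbf{J}$ with $j(\tau,s)$, and set $g:=\mathrm{Re}\,h_s$; smoothness of $s\mapsto g$ is inherited from the holomorphic dependence of $\omega$ and $j(\tau,s)$ on $s$. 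A routine linear-algebra check on $E_s\cong\mathbb{C}^{2n}$, using only $j(\tau,s)^2=-\mathrm{id}$ and (4b), then shows that $\big(E_s,\mathrm{Re}\,h_s,\sqrt{-1},j(\tau,s),\sqrt{-1}\,j(\tau,s)\big)$ is a flat hyperkähler vector space; in particular $g$ is Hermitian for $\mathbf{I},\mathbf{J},\mathbf{K}$, hence for every $\mathbf{I}_\zeta$, and the three fundamental forms $\omega_1=g(\mathbf{I}\cdot,\cdot)$, $\omega_2=g(\mathbf{J}\cdot,\cdot)$, $\omega_3=g(\mathbf{K}\cdot,\cdot)$ are precisely the three coefficient forms of the previous step.

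The crux is closedness. Under the biholomorphism from a neighbourhood of $s$ in $(M,\mathbf{I}_{\zeta_0})$ onto an open subset of $\mathcal{Z}_{\zeta_0}$ of Lemma~\ref{induced complex structure}, the vector $\varphi_s(a,-j(\tau,s)(a))$ maps to the value $a-\zeta_0 j(\tau,s)(a)$ at $\zeta_0$ of the corresponding section, while $\omega|_{\mathcal{Z}_{\zeta_0}}$ at $s(\zeta_0)$ is the constant form $\omega_s$; therefore the pull-back of $\omega|_{\mathcal{Z}_{\zeta_0}}$ to $M$ is exactly $(\omega_2+i\omega_3)+2\zeta_0\,\omega_1-\zeta_0^2(\omega_2-i\omega_3)$. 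Since each $\omega|_{\mathcal{Z}_{\zeta_0}}$ is symplectic, hence closed, by (4a), this polynomial in $\zeta_0$ is a closed $2$-form for every $\zeta_0\in U_0\cong\mathbb{C}$, and a $2$-form-valued polynomial whose values are all closed has each coefficient closed; hence $\omega_1,\omega_2,\omega_3$ are all closed. Finally I would invoke the standard fact that a hypercomplex manifold with a metric Hermitian for $\mathbf{I},\mathbf{J},\mathbf{K}$ whose three fundamental $2$-forms are closed is automatically hyperkähler (closedness of the fundamental form of an integrable Hermitian structure already forces it to be Kähler, hence the complex structure to be parallel). I expect the main obstacle to be exactly this bridge to the fibres: checking, with all the $p^*\mathcal{O}(2)$-twists and trivializations carefully tracked, that the pulled-back fibre symplectic forms assemble into the advertised quadratic polynomial in $\zeta_0$; once this is in place, the rest is linear algebra and a citation.
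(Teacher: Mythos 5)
Your proposal is correct and takes essentially the same route as the paper's proof: both turn $\omega$ into a constant nondegenerate pairing on $H^0\big(s(\mathbb{CP}^1),T_Fs(-1)\big)$, define $g$ as the (real part of the) Hermitian form $-\omega(\cdot,j(\tau,s)\cdot)$, obtain closedness of $\omega_1,\omega_2,\omega_3$ by pulling back the fibre symplectic forms through the local identifications $M\cong\mathcal{Z}_{\zeta_0}$ and exploiting that the resulting family of closed $2$-forms is quadratic in $\zeta_0$, and finish with the standard ``three closed fundamental forms imply hyperk\"ahler'' lemma. The only differences are presentational (the paper extracts $g(\mathbf{I}\cdot,\cdot)=-\tfrac{\sqrt{-1}}{2}(\psi_{-1}-\psi_1)$ from specific values of $\zeta_0$ rather than from polynomial coefficients).
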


\begin{remark}
$\text{(4-b)}$ states that for any $x \in H^0\left(s(\mathbb{CP}^1),T_Fs\otimes\mathcal{O}(-1)\right)$, we always have $\omega\big(x,j(\tau,s)(x)\big) \leq 0$, and $\omega\big(x,j(\tau,s)(x)\big) = 0$ if and only if $x = 0$.
\end{remark}

\begin{proof}

For any real section $s\in\mathcal{M}$, the restriction of $\omega$ on $s(\mathbb{CP}^1)$ denoted as $\left.\omega\right|_{s(\mathbb{CP}^1)}$ is a holomorphic section of $\Lambda^2 \left(T_Fs\right)^*\otimes\mathcal{O}(2)$. It can be considered as a 2-form with values in $\mathcal{O}(2)$. This indicates that $\omega$ is a nondegenerate skew-symmetric 2-form on $H^0\left(s(\mathbb{CP}^1),T_Fs(-1)\right)$. 

For the sake of clarity, our analysis will concentrate exclusively on the complex structure in $T_sM$ derived from $\mathcal{Z}_0$. More precisely, we investigate the complex structure engendered by the mapping $\Phi: T_sM \rightarrow H^0\big(s(\mathbb{CP}^1), T_Fs(-1)\big)$, which is defined such that $\varphi_s\big(a, -j(a)\big) \mapsto a \in \mathbb{C}^{2n}$. In this framework, the negative definite quadratic form $\omega$, as referenced in condition (4), is considered as a 2-form on $T_sM$. We denote $\omega$'s restriction on $\mathcal{Z}_{\zeta_0}$ for any $\zeta_0 \in U_0$ as $\omega_{\zeta_0}$. Assuming $s$ to be a real section and considering $a, b \in \mathbb{C}^{2n}$ as sections of $T_Fs(-1)$ under the specified trivialization, $\omega(a,b)$ is thus rendered a holomorphic function on $\mathbb{CP}^1$, implying its constancy. Moreover, $\left.\omega(a,b)\right|_{\mathcal{Z}_{\zeta_0}}$ equates to $\omega_{\mathcal{Z}_{\zeta_0}}\left(a|_{s(\zeta_0)},b|_{s(\zeta_0)}\right)$. Therefore, it follows that $\omega(a,b)=\omega_{\mathcal{Z}_{\zeta_0}}\left(a|_{s(\zeta_0)},b|_{s(\zeta_0)}\right)$ for every $\zeta_0\in U_0\subset\mathbb{CP}^1$.

The quaternionic structures $j$ on $H^0(s(\mathbb{CP}^1),T_F(-1))$ induce quaternionic structures $\mathscr{J}:=\Phi\circ j\circ\Phi^{-1}$ on $T_sM$. We can represent the quaternionic structure $\mathscr{J}$ on $T_sM$ as follows: using the trivialization of $T_Fs$ outlined in equation \ref{bundle trivialization}, we have
\[
\mathscr{J}: T_sM \rightarrow T_sM, \quad \varphi_s\big(a,-j(a)\big) \mapsto \varphi_s\big(j(a),a\big).
\]
For any $s \in M$, the metric $g: T_sM \times T_sM \rightarrow \mathbb{R}$ is defined by 
\[
\Big(\varphi_s\big(a,-j(a)\big),\varphi_s\big(b,-j(b)\big)\Big) \mapsto -\omega\big(a,j(b)\big)-\omega\big(b,j(a)\big),
\]
with the well-definedness of the Riemannian metric $g$ on $M$ being confirmed by condition (4-b).

Let $X_s^a:=\varphi_s\big(a,-j(a)\big)$ denote a tangent vector in $T_sM$ and a normal vector field on $s(\mathbb{CP}^1)$. Choose $\zeta_0\in U_0\subset {\mathbb {CP}}^1$ and consider a local isomorphism $f:U\subset M\rightarrow U^\prime\subset\mathcal{Z}_{\zeta_0}:=p^{-1}(\zeta_0)$. For every $a\in\mathbb{C}^{2n}$ and $s\in U$, utilizing the trivialization of $T_Fs$ as described in equation \ref{bundle trivialization}, we obtain:
\[
{\rm d}f(X_s^a)=\left. X_s^a\right|_{s(\mathcal{\zeta}_0)}=\sum_{\lambda=1}^{2n}\Big(a-j(a)\zeta_0\Big)^\lambda\left.\frac{\partial}{\partial z_0^\lambda}\right|_{s(\zeta_0)}.
\]
Next, we define a 2-form $\psi_{\zeta_0}$ on $U$ by $\psi_{\zeta_0}:T_sM\times T_sM\rightarrow \mathbb{C}$, where $\psi_{\zeta_0}(X_s^a,X_s^b):=\omega\big(a-j(a)\zeta_0,b-j(b)\zeta_0\big)$. Here, $a-j(a)\zeta_0$ and $b-j(b)\zeta_0$ are viewed as sections of $T_Fs(-1)$, and importantly, for any $\zeta_0\in U_0$, 
\[
\omega\Big(a-j(a)\zeta_0,b-j(b)\zeta_0\Big)=\omega_{\mathcal{Z}_{\zeta_0}}\left(\sum_{\lambda=1}^{2n}\big(a-j(a)\zeta_0\big)^\lambda\left.\frac{\partial}{\partial z_0^\lambda}\right|_{s(\zeta_0)},\sum_{\lambda=1}^{2n}\big(b-j(b)\zeta_0\big)^\lambda\left.\frac{\partial}{\partial z_0^\lambda}\right|_{s(\zeta_0)}\right)
\]
Thus, $\psi_{\zeta_0}=f^*\big(\omega|_{U^\prime}\big)$ constitutes a closed form on $U$, given that $\omega_{\mathcal{Z}_{\zeta_0}}$ is a holomorphic symplectic form on $\mathcal{Z}_{\zeta_0}$, in accordance with condition (4-a).

A complex structure is defined on $T_sM$ for each $s\in M$ as follows:
\[
\begin{cases}
&\mathbf{I}:\varphi_s\big(a,-j(a)\big)\mapsto\varphi_s\big(\sqrt{-1}\cdot a,-j(\sqrt{-1}\cdot a)\big), \\
&\mathbf{J}:\varphi_s\big(a,-j(a)\big)\mapsto\varphi_s\big(j(a),a\big), \\
&\mathbf{K}:\varphi_s\big(a,-j(a)\big)\mapsto\varphi_s\big(\sqrt{-1}\cdot j(a),-\sqrt{-1}\cdot a\big).
\end{cases}
\]
Here, $\mathbf{I}^2 = \mathbf{J}^2 = \mathbf{K}^2 = -\mathrm{id}$ and $\mathbf{I} \circ \mathbf{J} = \mathbf{K}$. It suffices to verify that $g(\mathbf{I}\cdot, \mathbf{I}\cdot) = g(\mathbf{J}\cdot, \mathbf{J}\cdot) = g(\mathbf{K}\cdot, \mathbf{K}\cdot) = g(\cdot, \cdot)$ and that $g(\mathbf{I}\cdot,\cdot)$, $g(\mathbf{J}\cdot,\cdot)$, $g(\mathbf{K}\cdot,\cdot)$ are closed forms.

\begin{lemma}
$g(\mathbf{I}\cdot, \mathbf{I}\cdot) = g(\mathbf{J}\cdot, \mathbf{J}\cdot) = g(\mathbf{K}\cdot, \mathbf{K}\cdot) = g(\cdot, \cdot)$.
\end{lemma}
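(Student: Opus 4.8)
The plan is to reduce the whole statement to an algebraic computation at a fixed real section $s\in M$, using the identification $T_sM\cong\mathbb{C}^{2n}$ furnished by $a\mapsto X_s^a=\varphi_s(a,-j(a))$, where $j=j(\tau,s)$ is the quaternionic structure on $H^0\big(s(\mathbb{CP}^1),T_Fs(-1)\big)\cong\mathbb{C}^{2n}$. Under this identification the three complex structures act by $\mathbf{I}\colon a\mapsto\sqrt{-1}\,a$, $\mathbf{J}\colon a\mapsto j(a)$, and $\mathbf{K}\colon a\mapsto\sqrt{-1}\,j(a)$ (the last because $\mathbf{K}=\mathbf{I}\circ\mathbf{J}$), while the metric reads $g(X_s^a,X_s^b)=-\omega(a,j(b))-\omega(b,j(a))$. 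The only structural inputs I would use are that $\omega$ is a $\mathbb{C}$-bilinear skew-symmetric form on $\mathbb{C}^{2n}$ — precisely the statement, recalled just before the lemma, that $\omega|_{s(\mathbb{CP}^1)}$ induces a constant nondegenerate $2$-form on $H^0\big(s(\mathbb{CP}^1),T_Fs(-1)\big)$ — and that $j$ is conjugate-linear with $j^2=-\mathrm{id}$.

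For $\mathbf{I}$-invariance I would write $g(\mathbf{I}X_s^a,\mathbf{I}X_s^b)=-\omega\big(\sqrt{-1}\,a,\,j(\sqrt{-1}\,b)\big)-\omega\big(\sqrt{-1}\,b,\,j(\sqrt{-1}\,a)\big)$, then use conjugate-linearity $j(\sqrt{-1}\,b)=-\sqrt{-1}\,j(b)$ together with $\mathbb{C}$-bilinearity of $\omega$: the factors $\sqrt{-1}$ and $-\sqrt{-1}$ cancel in each term, returning $-\omega(a,j(b))-\omega(b,j(a))=g(X_s^a,X_s^b)$. For $\mathbf{J}$-invariance I would compute $g(\mathbf{J}X_s^a,\mathbf{J}X_s^b)=-\omega\big(j(a),j(j(b))\big)-\omega\big(j(b),j(j(a))\big)$, use $j^2=-\mathrm{id}$ to turn this into $\omega(j(a),b)+\omega(j(b),a)$, and then apply skew-symmetry of $\omega$ to rewrite it as $-\omega(b,j(a))-\omega(a,j(b))=g(X_s^a,X_s^b)$. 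Finally, $\mathbf{K}$-invariance is immediate from $\mathbf{K}=\mathbf{I}\circ\mathbf{J}$ and the two cases just proved, since $g(\mathbf{K}\cdot,\mathbf{K}\cdot)=g(\mathbf{I}\mathbf{J}\cdot,\mathbf{I}\mathbf{J}\cdot)=g(\mathbf{J}\cdot,\mathbf{J}\cdot)=g(\cdot,\cdot)$.

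I do not expect any serious obstacle; the computation is routine. The one point that needs care is the sign bookkeeping forced by the conjugate-linearity of $j$: for $\mathbf{I}$ these signs cancel against the $\mathbb{C}$-bilinearity of $\omega$, whereas for $\mathbf{J}$ one instead exploits the skew-symmetry of $\omega$, so although the conclusions look parallel the two invariances are verified by genuinely different manipulations. I would also make sure at the outset that $\omega$ really is a well-defined constant bilinear form on $H^0\big(s(\mathbb{CP}^1),T_Fs(-1)\big)\cong\mathbb{C}^{2n}$ — i.e.\ that pairing two constant sections of $T_Fs(-1)$ through the $\mathcal{O}(2)$-valued $2$-form $\omega|_{s(\mathbb{CP}^1)}$ produces a holomorphic, hence constant, function on $\mathbb{CP}^1$ — a fact already recorded in the paragraph preceding the lemma, so that all the bilinear manipulations above are legitimate.
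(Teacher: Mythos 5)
Your proposal is correct and follows essentially the same route as the paper: identify $T_sM$ with $\mathbb{C}^{2n}$ via $a\mapsto\varphi_s(a,-j(a))$, then verify invariance by direct computation using the conjugate-linearity of $j$, $j^2=-\mathrm{id}$, and the bilinearity and skew-symmetry of the constant form $\omega$ on $H^0\big(s(\mathbb{CP}^1),T_Fs(-1)\big)$. The paper only writes out the $\mathbf{I}$ case (with a sign typo in its intermediate line) and dismisses $\mathbf{J}$ and $\mathbf{K}$ with ``similarly,'' so your explicit treatment of $\mathbf{J}$ via skew-symmetry and of $\mathbf{K}$ via $\mathbf{K}=\mathbf{I}\circ\mathbf{J}$ is the same argument carried out in full.
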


\begin{proof} 
Given any $X_s^a := \varphi_s\big(a, -j(a)\big)$ and $X_s^b := \varphi_s\big(b, -j(b)\big)$ in $T_sM$, we find that
\begin{align*}
g\big(\mathbf{I}X_s^a, \mathbf{I}X_s^b\big) &= g\big(\varphi(\sqrt{-1} \cdot a, \sqrt{-1} \cdot j(a)), \varphi(\sqrt{-1} \cdot b, \sqrt{-1} \cdot j(b))\big) \\
&= -\omega\big(\sqrt{-1} \cdot a, j(\sqrt{-1} \cdot b)\big) - \omega\big(\sqrt{-1} \cdot b, j(\sqrt{-1} \cdot a)\big) \\
&= \omega\big(a, j(b)\big) - \omega\big(b, j(a)\big) \\
&= g(X_s^a, X_s^b).
\end{align*}

Similarly, $g(\mathbf{J}X_s^a, \mathbf{J}X_s^b) = g(X_s^a, X_s^b)$ and $g(\mathbf{K}X_s^a, \mathbf{K}X_s^b) = g(X_s^a, X_s^b)$. 
\end{proof}

\begin{lemma}
$g(\mathbf{I}\cdot,\cdot)$, $g(\mathbf{J}\cdot,\cdot)$, and $g(\mathbf{K}\cdot,\cdot)$ are closed 2-forms on $M$.
\end{lemma}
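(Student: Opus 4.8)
The plan is to read off the three fundamental forms $g(\mathbf{I}\cdot,\cdot)$, $g(\mathbf{J}\cdot,\cdot)$, $g(\mathbf{K}\cdot,\cdot)$ from the family of closed $2$-forms $\psi_{\zeta_0}$ already produced above, using that $\psi_{\zeta_0}$ depends polynomially on $\zeta_0$. Fix $s\in M$ and use the trivialization of $T_Fs$ from \eqref{bundle trivialization}. For every $\zeta_0\in U_0\cong\mathbb{C}$ the form $\psi_{\zeta_0}=f^*\big(\omega|_{U'}\big)$ is closed on a neighbourhood of $s$ in $M$ — this is precisely what condition (4-a) gave us — and $\psi_{\zeta_0}\big(X_s^a,X_s^b\big)=\omega\big(a-\zeta_0\,j(a),\,b-\zeta_0\,j(b)\big)$. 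Since $\omega$ is $\mathbb{C}$-bilinear and skew on $H^0\big(s(\mathbb{CP}^1),T_Fs(-1)\big)\cong\mathbb{C}^{2n}$, expanding the right-hand side in powers of $\zeta_0$ gives
\[
\psi_{\zeta_0}=\alpha_0+\zeta_0\,\alpha_1+\zeta_0^2\,\alpha_2 ,
\]
where $\alpha_0,\alpha_1,\alpha_2$ are $\zeta_0$-independent $2$-forms on $M$: $\alpha_0(X_s^a,X_s^b)=\omega(a,b)$, $\alpha_1(X_s^a,X_s^b)=-\omega\big(a,j(b)\big)-\omega\big(j(a),b\big)$, $\alpha_2(X_s^a,X_s^b)=\omega\big(j(a),j(b)\big)$; a one-line check using skew-symmetry of $\omega$ and $j^2=-\mathrm{id}$ shows each $\alpha_i$ is alternating.

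I would then differentiate: $d\psi_{\zeta_0}=d\alpha_0+\zeta_0\,d\alpha_1+\zeta_0^2\,d\alpha_2$ vanishes for every $\zeta_0\in\mathbb{C}$, so comparing coefficients of this polynomial in $\zeta_0$ forces $d\alpha_0=d\alpha_1=d\alpha_2=0$. (The forms $\alpha_i$ are a priori $\mathbb{C}$-valued, which is harmless: the real and imaginary parts of a closed form are closed.)

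It then remains to rewrite the fundamental forms through the $\alpha_i$. Using $\mathbf{I}X_s^a=X_s^{\sqrt{-1}a}$, $\mathbf{J}X_s^a=X_s^{j(a)}$ and $\mathbf{K}X_s^a=X_s^{\sqrt{-1}j(a)}$ together with $g\big(X_s^a,X_s^b\big)=-\omega\big(a,j(b)\big)-\omega\big(b,j(a)\big)$, the $\mathbb{C}$-linearity of $\omega$ and the conjugate-linearity of $j$, a direct computation yields
\[
g(\mathbf{I}\cdot,\cdot)=\sqrt{-1}\,\alpha_1,\qquad g(\mathbf{J}\cdot,\cdot)=-(\alpha_0+\alpha_2),\qquad g(\mathbf{K}\cdot,\cdot)=\sqrt{-1}\,(\alpha_0-\alpha_2).
\]
Each is a fixed $\mathbb{C}$-linear combination of the closed forms $\alpha_0,\alpha_1,\alpha_2$, hence closed; since $s\in M$ was arbitrary and closedness is local, $g(\mathbf{I}\cdot,\cdot)$, $g(\mathbf{J}\cdot,\cdot)$ and $g(\mathbf{K}\cdot,\cdot)$ are closed $2$-forms on $M$. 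Combined with the preceding lemma this exhibits $(M,g,\mathbf{I})$, $(M,g,\mathbf{J})$ and $(M,g,\mathbf{K})$ as Kähler, completing the verification that $(M,g,\mathbf{I},\mathbf{J},\mathbf{K})$ is hyperkähler.

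The only delicate point is the bookkeeping in the last display — the conjugate-linearity of $j$ turns $\sqrt{-1}$ into $-\sqrt{-1}$ at several places and is easy to mishandle — but there is no conceptual obstacle: the whole content is the observation that the $\zeta_0$-expansion of the pulled-back fiberwise holomorphic symplectic forms produces, coefficient by coefficient, closed forms whose combinations are exactly the three Kähler forms (for instance $g(\mathbf{J}\cdot,\cdot)+\sqrt{-1}\,g(\mathbf{K}\cdot,\cdot)=-2\alpha_0$ recovers the holomorphic symplectic form of the fiber $\mathcal{Z}_0$).
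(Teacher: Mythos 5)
Your proof is correct and follows essentially the same route as the paper: both arguments rest on the previously established fact that each $\psi_{\zeta_0}=f^*\big(\omega|_{U^\prime}\big)$ is closed, and then exhibit the three fundamental forms as constant linear combinations of the $\psi_{\zeta_0}$ (the paper evaluates at $\zeta_0=\pm1$ to get $g(\mathbf{I}\cdot,\cdot)=-\tfrac{\sqrt{-1}}{2}(\psi_{-1}-\psi_1)=\sqrt{-1}\,\alpha_1$, which agrees with your coefficient extraction). Your polynomial expansion in $\zeta_0$ is just a more systematic packaging that also makes the cases of $\mathbf{J}$ and $\mathbf{K}$ explicit, which the paper leaves as ``similar reasoning,'' and your sign bookkeeping checks out.
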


\begin{proof} 
Consider any $s_0 \in M$ and $\zeta_0 \in \mathbb{CP}^1$. Here, $\psi_{\zeta_0}$ is a closed 2-form on a neighborhood $U$ of $s_0$ in $M$, defined by
\[
\psi_{\zeta_0} (X_s^a, X_s^b) := \omega\Big(a - j(\tau, s)(a) \zeta_0, b - j(\tau, s)(b) \zeta_0\Big)
\] 
for all $s \in U$. Therefore,
\begin{align*}
g(\mathbf{I}X_s^a, X_s^b) &= -\omega\Big(\sqrt{-1}\cdot a, j(\tau,s)(b)\Big) - \omega\Big(b, j(\tau,s)(\sqrt{-1}\cdot a)\Big)\\
& = -\sqrt{-1}\Big(\omega\big(a, j(\tau,s)(b)\big) + \omega\big(j(\tau,s)(a), b\big)\Big)\\
& = -\frac{\sqrt{-1}}{2}\big(\psi_{-1} - \psi_1\big)\Big(X_s^a, X_s^b\Big).
\end{align*}
This shows that $g(\mathbf{I}\cdot, \cdot)$ coincides with $-\frac{\sqrt{-1}}{2}(\psi_{-1} - \psi_1)$, a closed 2-form in $U$. Consequently, as closed forms are defined locally, we conclude that $g(\mathbf{I}\cdot, \cdot)$ is a closed 2-form. Similar reasoning applies to $g(\mathbf{J}\cdot, \cdot)$ and $g(\mathbf{K}\cdot, \cdot)$, establishing them as closed 2-forms. 
\end{proof}
With this, we conclude the proof of the theorem \ref{main section4} : $(M,g,\mathbf{I},\mathbf{J},\mathbf{K})$ is a hyperkähler manifold.

\end{proof}


\section{the twistor space of $\mathbb{C}^{2n}$}

\begin{prop}
Considering $\mathbb{C}^{2n}$ as a hyperkähler manifold, the twistor space $\mathcal{Z}$ constructed from $\mathbb{C}^{2n}$ is biholomorphically equivalent to the vector bundle $\bigoplus^{2n}\mathcal{O}(1)$. Conversely, starting from the holomorphic vector bundle $\bigoplus^{2n}\mathcal{O}(1)$ equipped with a specific real structure, a deformation space $\mathcal{M}$ of the zero section $0_s$ in the bundle can be established. Furthermore, $\mathcal{M}$ is isomorphic to $\mathbb{C}^{4n}=(a^1,a^2,b^1,b^2)$, with $a^1,a^2,b^1,$ and $b^2$ being elements of $\mathbb{C}^n$. The hyperkähler manifold $M$, derived via this approach, can be considered as a smooth submanifold of $\mathcal{M}$, represented by $M\subset\mathcal{M}$ as $(a^1,a^2,-i\bar a^2,i\bar a^1)$.
\end{prop}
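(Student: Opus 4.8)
The proof naturally splits into the ``forward'' construction (from the flat hyperkähler $\mathbb{C}^{2n}$ to the bundle $\bigoplus^{2n}\mathcal{O}(1)$) and the ``backward'' one (recovering $\mathcal{M}$ and $M$ from the bundle equipped with a real structure).

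For the forward direction, the plan is to fix the flat hyperkähler model: identify $\mathbb{C}^{2n}\cong\mathbb{H}^n$, write a point as $u\in\mathbb{C}^{2n}$, let $\mathbf{I}$ be the standard complex structure, $\mathbf{J}(v)=A\bar v$ for a constant unitary matrix $A$ with $A\bar A=-\mathcal{I}_{2n}$, $\mathbf{K}=\mathbf{I}\mathbf{J}$, and $g$ the flat metric; these are patently parallel and quaternionically related, so $\mathcal{Z}=\mathbb{C}^{2n}\times\mathbb{CP}^1$ carries the twistor complex structure $\underline{\mathbf{I}}=(\mathbf{I}_\zeta,\mathbf{I}_0)$. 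I then write down the candidate bundle map $F\colon\mathcal{Z}\to\bigoplus^{2n}\mathcal{O}(1)$ over the standard charts by
\[
F_0(u,\zeta_0)=\bigl(\zeta_0,\ u+\zeta_0\,C\bar u\bigr),\qquad F_1(u,\zeta_1)=\bigl(\zeta_1,\ \zeta_1 u+C\bar u\bigr),
\]
where $C$ is the explicit constant matrix attached to $\mathbf{J}$. Since $\zeta_0\zeta_1=1$ on the overlap one has $F_0=\zeta_0 F_1$, which is exactly the cocycle defining $\bigoplus^{2n}\mathcal{O}(1)$ in Section 3, so $F$ is a well-defined fibrewise-linear smooth bundle map; it is a diffeomorphism because the $\mathbb{R}$-linear system $u+\zeta_0 C\bar u=\xi_0$ has invertible matrix, its determinant involving $\mathcal{I}-|\zeta_0|^2 C\bar C=(1+|\zeta_0|^2)\mathcal{I}$.

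The heart of the forward direction is verifying that $F$ is $\underline{\mathbf{I}}$-holomorphic. As $\underline{\mathbf{I}}$ is block diagonal, $T^{0,1}_{\underline{\mathbf{I}}}$ at $(u,\zeta_0)$ is $T^{0,1}_{\mathbf{I}_{\zeta_0}}(\mathbb{C}^{2n})\oplus\mathbb{C}\,\partial/\partial\bar\zeta_0$; the components of $F_0$ contain no $\bar\zeta_0$, so it remains to check they are killed by $T^{0,1}_{\mathbf{I}_{\zeta_0}}$ for each fixed $\zeta_0$. Expanding $\mathbf{I}_\zeta=a\mathbf{I}+b\mathbf{J}+c\mathbf{K}$ on the $\mathbf{I}$-decomposition of the complexified tangent space shows that on flat $\mathbb{H}^n$ one has $T^{0,1}_{\mathbf{I}_\zeta}=\operatorname{span}\bigl\{\partial/\partial\bar u^\alpha+\zeta\,(\text{const})^\beta_\alpha\,\partial/\partial u^\beta\bigr\}$, and imposing that $u^\gamma+\zeta(C\bar u)^\gamma$ be annihilated determines $C$ and simultaneously gives $C\bar C=-\mathcal{I}_{2n}$, reconfirming the diffeomorphism claim. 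Hence $F$ is a biholomorphism fibred over $\mathbb{CP}^1$; one also checks that the Hitchin--Karlhede--Lindström--Ro\v{c}ek real structure $\tau$ on $\mathcal{Z}$ (antipodal on each twistor line $\mathbb{CP}^1_x$) is carried by $F$ to the real structure $f_A$ of Lemma \ref{holomorphic map compatible with sigma} on $\bigoplus^{2n}\mathcal{O}(1)$ for the appropriate $A$. \emph{This $\underline{\mathbf{I}}$-holomorphicity computation is the main obstacle}; the remaining verifications are linear-algebra bookkeeping.

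For the backward direction, I take $E:=\bigoplus^{2n}\mathcal{O}(1)$, equip it with the real structure $\tau:=f_A$ for $A=\left(\begin{smallmatrix}0 & -i\mathcal{I}_n\\ i\mathcal{I}_n & 0\end{smallmatrix}\right)$ (so $A\bar A=-\mathcal{I}_{2n}$), and with the holomorphic section $\omega$ of $\Lambda^2(N\mathcal{Z})^*\otimes p^*\mathcal{O}(2)$ which, on the trivialization of Section 3, is a suitable scalar multiple of the standard symplectic form $\Omega$ on $\mathbb{C}^{2n}$. Conditions (1) and (2) of Theorem \ref{main section4} are immediate; the zero section $0_s$ has $T_F0_s=E\cong\bigoplus^{2n}\mathcal{O}(1)$ with $H^1(\mathbb{CP}^1,E)=0$ and $\dim H^0(\mathbb{CP}^1,E)=4n$, so Proposition \ref{first step of section4} produces the parameter manifold $\mathcal{M}$; the symplectic-on-fibres property (4-a) is clear, and a direct computation of $\omega\bigl(x,j(\tau,0_s)(x)\bigr)$ using $j_f(x)=\bar A\bar x$ from Proposition \ref{main section2} gives $-|x|^2$ up to the chosen scalar, yielding (4-b). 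Next I identify $\mathcal{M}$ with $H^0(\mathbb{CP}^1,E)$: a fibrewise translation $v\mapsto v-s(p(v))$ is a holomorphic automorphism of $E$ carrying any holomorphic section $s$ onto $0_s$, so each section is a one-step deformation of $0_s$ with normal bundle $\cong E$, and, the Kodaira--Spencer map of the linear family being the identity on $H^0$, the complex structure of Theorem \ref{complex regular deformation space} agrees with the linear one; hence $\mathcal{M}\cong H^0(\mathbb{CP}^1,E)\cong\mathbb{C}^{4n}$ with coordinates $(a,b)=(a^1,a^2,b^1,b^2)$ under the isomorphism $\varphi$. Finally, $M\subset\mathcal{M}$ is the fixed-point set of $\iota$, equivalently of the induced real structure $r_f$ on $H^0(\mathbb{CP}^1,E)$; since $r_f\bigl(\varphi(a,b)\bigr)=\varphi\bigl(j_f(b),-j_f(a)\bigr)$ by Corollary \ref{induced real and quaternionic structures}, the fixed-point condition reads $b=-\bar A\bar a$, i.e. $(b^1,b^2)=(-i\bar a^2,\,i\bar a^1)$, so $M=\{(a^1,a^2,-i\bar a^2,i\bar a^1)\}\subset\mathcal{M}$ is a $4n$-real-dimensional submanifold. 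To close the loop I note that, under $a\mapsto\varphi(a,-\bar A\bar a)$, the hyperkähler structure of Theorem \ref{main section4} on this $M$ is precisely the flat hyperkähler structure of $\mathbb{C}^{2n}$, so the two constructions are mutually inverse.
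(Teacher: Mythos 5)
Your proposal is correct and follows essentially the same route as the paper: an explicit fibrewise $\mathbb{R}$-linear map between $\mathcal{Z}=\mathbb{C}^{2n}\times\mathbb{CP}^1$ and $\bigoplus^{2n}\mathcal{O}(1)$ (yours is the inverse of the paper's $\phi$, without the $1/(1+|\zeta_0|^2)$ normalization), holomorphicity checked against the fibrewise structure $\mathbf{I}_{\zeta}$, identification $\mathcal{M}\cong H^0\big(\mathbb{CP}^1,\bigoplus^{2n}\mathcal{O}(1)\big)\cong\mathbb{C}^{4n}$, and the fixed-point computation $b=-\bar A\bar a$ giving $M=\{(a^1,a^2,-i\bar a^2,i\bar a^1)\}$ with the same matrix $A$. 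The only cosmetic differences are that you obtain the normal-bundle statement for arbitrary sections via fibrewise translations, where the paper computes the transition matrix directly, and you additionally verify the symplectic conditions (4-a), (4-b), which the paper's Section 5 omits.
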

\begin{remark}
In subsequent analysis, we will focus on the case where $n=1$ for simplicity. It is important to note that scenarios with $n>1$ can be conceptualized as iterative applications of the $n=1$ case.
\end{remark}

Consider the complex vector space $\mathbb{C}^2$ with complex coordinates $(w^1, w^2)$. Let $M\cong\mathbb{C}^2$ be a smooth manifold isomorphic to $\mathbb{R}^4$. A natural global coordinate system exists on $M$, denoted by $\big(\Re w^1, \Im w^1, \Re w^2, \Im w^2\big)$, and alternatively represented as $(w^1, w^2)$. The complexification of the real tangent bundle $TM$ yields a trivial complex bundle $T_{\mathbb{C}}M$ of rank four, framed globally by $\Big(\frac{\partial}{\partial w^1},\frac{\partial}{\partial w^2}, \frac{\partial}{\partial \bar{w}^1},\frac{\partial}{\partial \bar{w}^2}\Big)$. A complex structure on $M$, defined as a bundle map $\mathbf{I}:TM\rightarrow TM$ satisfying $\mathbf{I}^2=-\mathrm{id}$, extends to $T_{\mathbb{C}}M$ as a $\mathbb{C}$-linear map $\mathbf{I}:T_{\mathbb{C}}M\rightarrow T_{\mathbb{C}}M$ with $\mathbf{I}^2 = -\mathrm{id}$. Employing the frame on $T_{\mathbb{C}}M$, we can represent  three complex structures $\mathbf{I},\mathbf{J},\mathbf{K}$ as matrices:

\[
\mathbf{I}\sim\begin{pmatrix}
i &  &  &  \\
 & i &  &  \\
 &  & -i &  \\
 &  &  & -i
\end{pmatrix};\quad
\mathbf{J}\sim\begin{pmatrix}
 &  &  & 1 \\
 &  & -1 &  \\
 & 1 &  &  \\
-1 &  &  & 
\end{pmatrix};\quad
\mathbf{K}\sim\begin{pmatrix}
 &  &  & i \\
 &  & -i &  \\
 & -i &  &  \\
i &  &  & 
\end{pmatrix}.
\]

Now denote the standard Euclidean metric $g$ on $M$ by: 
\[
g=\frac{1}{2}\Big(\textrm{d}w^1\otimes \textrm{d}\bar w^1+\textrm{d}\bar w^1\otimes \textrm{d}w^1+\textrm{d}w^2\otimes \textrm{d}\bar w^2+\textrm{d}\bar w^2\otimes \textrm{d}w^2\Big)
\]
We can verify that $(M,g,\mathbf{I},\mathbf{J},\mathbf{K})$ is a hyperk\"ahler manifold.

\begin{lemma}
The twistor space $\mathcal{Z}$ is biholomorphically equivalent to the vector bundle $\bigoplus^2\mathcal{O}(1)$ over $\mathbb{CP}^1$.
\end{lemma}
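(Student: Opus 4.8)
The plan is to construct explicit $\underline{\mathbf I}$-holomorphic coordinates on $\mathcal Z = M\times\mathbb{CP}^1 = \mathbb C^2\times\mathbb{CP}^1$ over each piece $\mathbb C^2\times U_i$ of the standard cover of $\mathbb{CP}^1$, and then to read off that the resulting transition functions are exactly those of $\bigoplus^2\mathcal O(1)\to\mathbb{CP}^1$ as recorded in Section 3.

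The one substantial step is to determine the Dolbeault operator on $\mathbb C^2\times U_0$. Writing
\[
\mathbf I_{\zeta_0}=a\,\mathbf I+\tfrac12(b+ic)(\mathbf J-i\mathbf K)+\tfrac12(b-ic)(\mathbf J+i\mathbf K),\qquad a=\frac{1-|\zeta_0|^2}{1+|\zeta_0|^2},\quad \tfrac12(b+ic)=\frac{\zeta_0}{1+|\zeta_0|^2},\quad \tfrac12(b-ic)=\frac{\bar\zeta_0}{1+|\zeta_0|^2},
\]
and using the matrices of $\mathbf I,\mathbf J,\mathbf K$ displayed above, one solves $\mathbf I_{\zeta_0}v=iv$ and finds, after a short computation, that $T^{1,0}_{\underline{\mathbf I}}$ over $\mathbb C^2\times U_0$ is spanned by $\dfrac{\partial}{\partial\zeta_0}$, $\dfrac{\partial}{\partial w^1}+i\bar\zeta_0\dfrac{\partial}{\partial\bar w^2}$ and $\dfrac{\partial}{\partial w^2}-i\bar\zeta_0\dfrac{\partial}{\partial\bar w^1}$. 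Requiring $\bar\partial f=0$, i.e.\ that $\mathrm{d}f$ annihilate the conjugate space $T^{0,1}_{\underline{\mathbf I}}$, for functions affine in $(w,\bar w)$ with $\zeta_0$-dependent coefficients then yields the holomorphic functions
\[
\xi_0^1:=w^1-i\zeta_0\bar w^2,\qquad \xi_0^2:=w^2+i\zeta_0\bar w^1 .
\]
Together with $\zeta_0$ these form a holomorphic chart on $\mathbb C^2\times U_0$: the fiberwise $\mathbb R$-linear map $(w^1,w^2)\mapsto(\xi_0^1,\xi_0^2)$ is invertible, with inverse $w^1=\dfrac{\xi_0^1+i\zeta_0\,\overline{\xi_0^2}}{1+|\zeta_0|^2}$ and the analogous formula for $w^2$, so $(\xi_0^1,\xi_0^2,\zeta_0)$ is a genuine coordinate system.

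Running the same computation over $U_1$ with coordinate $\zeta_1=\zeta_0^{-1}$ — for which the coefficient of $\mathbf I$ changes sign and those of $\mathbf J\pm i\mathbf K$ are interchanged — produces the holomorphic chart $(\xi_1^1,\xi_1^2,\zeta_1)$ with $\xi_1^1=\zeta_1 w^1-i\bar w^2$ and $\xi_1^2=\zeta_1 w^2+i\bar w^1$. On $U_0\cap U_1$ one checks directly that $\zeta_0\zeta_1=1$ and $\xi_0^\alpha=\zeta_0\,\xi_1^\alpha$ for $\alpha=1,2$; these are precisely the gluing data of $\bigoplus^2\mathcal O(1)$. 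Hence the map $\mathcal Z\to\bigoplus^2\mathcal O(1)$ sending the point with coordinates $(\xi_i^1,\xi_i^2,\zeta_i)$ to $(\zeta_i,\xi_i^1,\xi_i^2)\in U_i\times\mathbb C^{2}$ is well defined, holomorphic, and bijective with holomorphic inverse, hence a biholomorphism.

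The main obstacle is the linear-algebra bookkeeping in the first step — diagonalizing $\mathbf I_{\zeta_0}$ on $T_{\mathbb C}M$ to pin down $T^{0,1}_{\underline{\mathbf I}}$ and thus the local holomorphic functions; everything afterward is a routine verification. Note that it is exactly the appearance of a single factor $\zeta_0$ (rather than $\zeta_0^2$ or none) in the transition law for the fiber coordinates that forces the total space to be $\bigoplus^2\mathcal O(1)$, consistent with point (i) of the Hitchin–Karlhede–Lindström–Roček list.
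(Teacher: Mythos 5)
Your proof is correct and is essentially the paper's argument run in the opposite direction: your chart $(\xi_0^1,\xi_0^2,\zeta_0)=(w^1-i\zeta_0\bar w^2,\,w^2+i\zeta_0\bar w^1,\,\zeta_0)$ is exactly the inverse of the paper's explicit map $\phi_0$, and your diagonalization of $\mathbf I_{\zeta_0}$ is the same linear algebra the paper packages as the identity $\mathcal I(\zeta_0)\cdot J(\zeta_0)=J(\zeta_0)\cdot\mathrm{diag}(i,i,-i,-i)$. The only presentational difference is that you derive the holomorphic coordinates from the $(0,1)$-distribution and read off the $\mathcal O(1)\oplus\mathcal O(1)$ transition law, whereas the paper writes $\phi$ down and verifies it fiberwise.
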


\begin{remark}
According to Hitchin-Karlhede-Lindstr{\"o}m-Ro{\v{c}}ek's theorem \cite{hitchin1987hyperkahler}, we can readily establish the aforementioned lemma. Indeed, it is feasible to explicitly construct an isomorphism $\phi:\bigoplus^2\mathcal{O}(1)\xrightarrow{\sim}\mathcal{Z}$.
\end{remark}

\begin{proof}
First, we consider the standard open cover of $\mathbb{CP}^1$, denoted as $U_0 \cup U_1$, with coordinates $\zeta_0$ and $\zeta_1$. A local trivialization for $\bigoplus^2\mathcal{O}(1)$ is provided relative to this cover on $\mathbb{CP}^1$. The bundle is represented as $\bigoplus^2\mathcal{O}(1) = (U_0 \times \mathbb{C}^2) \cup (U_1 \times \mathbb{C}^2) / \sim$, with coordinates $(\zeta_0, z_0^1, z_0^2)$ and $(\zeta_1, z_1^1, z_1^2)$ on them. The equivalence of $(\zeta_0, z_0^1, z_0^2) \in U_0 \times \mathbb{C}^2$ and $(\zeta_1, z_1^1, z_1^2) \in U_1 \times \mathbb{C}^2$ holds if $\zeta_0 \cdot \zeta_1 = 1$, $z_0^1 = \zeta_0 \cdot z_1^1$, and $z_0^2 = \zeta_0 \cdot z_1^2$. For the twistor space $\mathcal{Z}$, constructed from $M$ and diffeomorphic to $M \times \mathbb{CP}^1$, an open cover is defined as $(U_0 \times M) \cup (U_1 \times M) / \sim$, with coordinates $(\zeta_i, w_i^1, w_i^2)$ on $U_i \times M$ for $i=0, 1$. The points $(\zeta_0, w_0^1, w_0^2) \in U_0 \times M$ and $(\zeta_1, w_1^1, w_1^2) \in U_1 \times M$ are equivalent in $\mathcal{Z}$ if $\zeta_0 \cdot \zeta_1 = 1$, $w_0^1 = w_1^1$, and $w_0^2 = w_1^2$.

The complex structure $\underline{\mathbf{I}}$ on $\mathcal{Z}$ is defined as follows: The restriction of $\underline{\mathbf{I}}$ to $T_{(\cdot, \zeta_0)}\mathcal{Z}$ is given by $\mathbf{I}_{\zeta_0} \oplus \mathbf{I}_0$, where $\mathbf{I}_0$ represents the standard complex structure on $\mathbb{CP}^1$ and
\[
\mathbf{I}_{\zeta_0} = \frac{1}{1 + \zeta_0\bar{\zeta}_0}\left((1 - \zeta_0\bar{\zeta}_0)\mathbf{I} + (\zeta_0 + \bar{\zeta}_0)\mathbf{J} + \left(\sqrt{-1}(-\zeta_0 + \bar{\zeta}_0)\right)\mathbf{K}\right)
\]
In this context, the complex structure on the fiber $\mathcal{Z}_0$ over $0 \in \mathbb{CP}^1$ precisely corresponds to $\mathbf{I}$ as defined above. Furthermore, we can express the complex structure $\mathbf{I}_{\zeta_0}$ on $\mathcal{Z}_{\zeta_0}$ for $\zeta_0 \in U_0$ as a matrix $\mathcal{I}(\zeta_0)$ with respect to the frame $\left(\frac{\partial}{\partial w^1}, \frac{\partial}{\partial w^2}, \frac{\partial}{\partial \bar{w}^1}, \frac{\partial}{\partial \bar{w}^2}\right)$, where $\mathcal{I}(\zeta_0)$ is equal to
\[
\frac{1}{1 + \zeta_0\bar{\zeta}_0}\begin{pmatrix}
\sqrt{-1} (1 - \zeta_0\bar{\zeta}_0) & & & 2\zeta_0 \\
& \sqrt{-1} (1 - \zeta_0\bar{\zeta}_0) & -2\zeta_0 & \\
& 2\bar{\zeta}_0 & \sqrt{-1} (1 - \zeta_0\bar{\zeta}_0) & \\
-2\bar{\zeta}_0 & & & \sqrt{-1} (1 - \zeta_0\bar{\zeta}_0)
\end{pmatrix}
\]

Now, we establish an isomorphism denoted by $\phi:\bigoplus^2\mathcal{O}(1)\xrightarrow{\sim}\mathcal{Z}$. This isomorphism, represented by $\phi_0$ and $\phi_1$ with respect to the given charts, is defined as follows: For $\phi_0$, we map $U_0 \times \mathbb{C}^2 \rightarrow U_0 \times M$ by
\[
(\zeta_0, z_0^1, z_0^2) \mapsto \left(\zeta_0, \frac{1}{1 + \zeta_0\bar\zeta_0}\left(z_0^1 + \sqrt{-1}\zeta_0\bar z_0^2\right), \frac{1}{1 + \zeta_0\bar\zeta_0}\left(z_0^2 - \sqrt{-1}\zeta_0\bar z_0^1\right)\right)
\]
Similarly, for $\phi_1$, we map $U_1 \times \mathbb{C}^2 \rightarrow U_1 \times M$ by
\[
(\zeta_1, z_1^1, z_1^2) \mapsto \left(\zeta_1, \frac{1}{1 + \zeta_1\bar\zeta_1}\left(\bar\zeta_1z_1^1 + \sqrt{-1}\bar z_1^2\right), \frac{1}{1 + \zeta_1\bar\zeta_1}\left(\bar\zeta_1z_1^2 - \sqrt{-1}\bar z_1^1\right)\right)
\]
The final step of our proof is the verification that $\phi$ is a well-defined bundle isomorphism, demonstrating that $\phi$ enables a holomorphic mapping between $\bigoplus^2\mathcal{O}(1)$ and $\mathcal{Z}$. 

First, consider the scenario where \((\zeta_0, z_0^1, z_0^2)\) and \((\zeta_1, z_1^1, z_1^2)\) represent the same point in \(\bigoplus^2\mathcal{O}(1)\). In this case, \(\phi_0(\zeta_0, z_0^1, z_0^2)\) and \(\phi_1(\zeta_1, z_1^1, z_1^2)\) correspond to the same point in \(\mathcal{Z}\), confirming that \(\phi\) is a smooth map.

Next, we examine the holomorphic property of \(\phi\). Given that the complex structure on \(\mathcal{Z}\) is described by \(\mathbf{I}_\zeta \oplus \mathbf{I}_0\), with \(\mathbf{I}_0\) denoting the standard complex structure on \(\mathbb{CP}^1\), it is necessary to verify that the restriction of \(\phi\) to each fiber \(\mathcal{Z}_\zeta\) constitutes a holomorphic map for every \(\zeta \in \mathbb{CP}^1\). Specifically, for \(\zeta_0 \in U_0\), the restriction of \(\phi_0\) to the fiber \(\{\zeta_0\} \times \mathbb{C}^2\) is expressed as
\[
\phi_0\Big|_{\{\zeta_0\} \times \mathbb{C}^2} : (z_0^1, z_0^2) \mapsto \frac{1}{1 + \zeta_0\bar{\zeta}_0}\Big(z_0^1 + \sqrt{-1}\zeta_0\bar{z}_0^2, z_0^2 - \sqrt{-1}\zeta_0\bar{z}_0^1\Big)
\]
The Jacobian matrix of \(\phi_0|_{\zeta_0 \times \mathbb{C}^2}\), denoted \(J(\zeta_0)\), is defined as
\[
J(\zeta_0) := \frac{1}{1 + \zeta_0\bar{\zeta}_0}\begin{pmatrix}
1 & & & \sqrt{-1}\zeta_0 \\
& 1 & -\sqrt{-1}\zeta_0 & \\
& -\sqrt{-1}\bar{\zeta}_0 & 1 & \\
\sqrt{-1}\bar{\zeta}_0 & & & 1
\end{pmatrix}
\]
After verifying that \(\det(J) = 1\), we conclude that the restriction of \(\phi_0\) to \(\zeta_0 \times \mathbb{C}^2\) is an \(\mathbb{R}\)-linear isomorphism for each \(\zeta_0 \in U_0\). Moreover, \(\phi_0|_{\zeta_0 \times \mathbb{C}^2}\) is holomorphic since
\[
\mathcal{I}(\zeta_0) \cdot J(\zeta_0) = J(\zeta_0) \cdot \begin{pmatrix}
i & & & \\
  & i & & \\
  & & -i & \\
  & & & -i
\end{pmatrix}
\]
This indicates that $\mathbf{I}_{\zeta_0} \circ (\phi_0)_* = (\phi_0)_* \circ \mathbf{I}$ for each $\zeta_0 \in U_0$, meaning the restriction of $\phi$ to each fiber $\mathcal{Z}_{\zeta_0}$ is holomorphic for every $\zeta_0 \in U_0$.

\end{proof}

In our previous discussion, we explored the twistor space $\bigoplus^2\mathcal{O}(1)$, derived from the hyperk\"ahler manifold $M \cong \mathbb{C}^2$. We now address the inverse problem of constructing a hyperk\"ahler manifold from the holomorphic fiber bundle $\bigoplus^2\mathcal{O}(1)$. A crucial step in this construction is the introduction of a real structure $\tau$ on $\bigoplus^2\mathcal{O}(1)$. This is achieved by considering a point $p = (x, y)$ in $M$, where the section $\zeta \mapsto (\zeta, p)$ forms a real section in $\mathcal{Z}$. Through the isomorphism $\phi:\bigoplus^2\mathcal{O}(1)\xrightarrow{\sim}\mathcal{Z}$, this real section corresponds to a holomorphic section $s$ of $\bigoplus^2\mathcal{O}(1)$, represented by $s_0$ and $s_1$ as:
\[
\begin{cases}
s_0: U_0 \rightarrow U_0 \times \mathbb{C}^2, & \zeta_0 \mapsto \Big(\zeta_0, x - \sqrt{-1}\zeta_0\bar{y}, y + \sqrt{-1}\zeta_0\bar{x}\Big),\\
s_1: U_1 \rightarrow U_1 \times \mathbb{C}^2, & \zeta_1 \mapsto \Big(\zeta_1, \zeta_1x - \sqrt{-1}\bar{y}, \zeta_1y + \sqrt{-1}\bar{x}\Big).
\end{cases}
\]
The map $\tau$ transforms 
\[
\Big(\zeta, x - \sqrt{-1}\zeta\bar{y}, y + \sqrt{-1}\zeta\bar{x}\Big) \in U_0 \times \mathbb{C}^2 \mapsto \Big(-\bar{\zeta}, -\bar{\zeta}x - \sqrt{-1}\bar{y}, -\bar{\zeta}y + \sqrt{-1}\bar{x}\Big) \in U_1 \times \mathbb{C}^2,
\] 
thereby establishing a real structure on $\bigoplus^2\mathcal{O}(1)$. With the real structure $\tau$, the zero section $0_s$ becomes a real section of $\bigoplus^2\mathcal{O}(1)$, as outlined in Definition \ref{real sections}. The normal bundle of $0_s$ coincides with $\bigoplus^2\mathcal{O}(1)$, leading to $H^1(0_s,\bigoplus^2\mathcal{O}(1))=0$. Consequently, we can explore the regular deformation space $\widetilde{\mathscr{M}_{0_s}^{4n}}$ of $0_s$ in the total space of $\bigoplus^2\mathcal{O}(1)$. In this context, we can construct an analytic family of compact submanifolds, as per Definition \ref{analytic family of compact submanifolds}. With respect to the trivialization of $\bigoplus^2\mathcal{O}(1)$, any holomorphic section $s$ can be denoted as:
\[
\begin{cases}
s_0: U_0 \rightarrow U_0 \times \mathbb{C}^2, & \zeta_0 \mapsto (\zeta_0, a + b\zeta_0),\\
s_1: U_1 \rightarrow U_1 \times \mathbb{C}^2, & \zeta_1 \mapsto (\zeta_1, a\zeta_1 + b),
\end{cases}
\]
and represented by $\varphi(a, b)$. Furthermore, the real section of $\bigoplus^2\mathcal{O}(1)$, corresponding to the real section $\zeta \mapsto (\zeta, x, y)$ of $\mathcal{Z}$ discussed above, can be expressed as $\varphi(x, y, -\sqrt{-1}\bar{y}, \sqrt{-1}\bar{x})$.

We aim to construct an analytic family of compact submanifolds of $\bigoplus^2\mathcal{O}(1)$, each of dimension one and parameterized by $H^0\left(\mathbb{CP}^1, \bigoplus^2\mathcal{O}(1)\right)$, as described in Definition \ref{analytic family of compact submanifolds}. Consider $\mathcal{V} = \bigsqcup_{(a,b) \in \mathbb{C}^4} \varphi(a,b)$ as a submanifold of $\bigoplus^2\mathcal{O}(1) \times H^0\left(\mathbb{CP}^1, \bigoplus^2\mathcal{O}(1)\right)$, which has a complex dimension of five. The natural projection $\pi: \mathcal{V} \rightarrow H^0\left(\mathbb{CP}^1, \bigoplus^2\mathcal{O}(1)\right)$ constitutes the analytic family we seek.

\begin{lemma}
\label {normal bundle in example}
Let $s = \varphi(a^1, a^2, b^1, b^2)$ be any holomorphic section of $\bigoplus^2\mathcal{O}(1)$, then the normal bundle $T_F s$ of $s(\mathbb{CP}^1)$ is isomorphic to $\bigoplus^2\mathcal{O}(1)$.
\end{lemma}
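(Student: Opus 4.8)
The plan is to reduce the statement to the elementary fact that the normal bundle of the zero section of a holomorphic vector bundle is the bundle itself, by exhibiting an explicit biholomorphism of the total space of $\bigoplus^2\mathcal{O}(1)$ carrying $s(\mathbb{CP}^1)$ onto the zero section. Working with the trivialization $\bigoplus^2\mathcal{O}(1)=(U_0\times\mathbb{C}^2)\cup(U_1\times\mathbb{C}^2)/\!\sim$ with gluing $\zeta_0\zeta_1=1$, $z_0^\alpha=\zeta_0 z_1^\alpha$, I would consider fiberwise translation by $-s$, namely
\[
\Theta\colon\ (\zeta_0,z_0)\mapsto(\zeta_0,\,z_0-a-b\zeta_0),\qquad (\zeta_1,z_1)\mapsto(\zeta_1,\,z_1-a\zeta_1-b).
\]
Because $s=\varphi(a,b)$ is a genuine global holomorphic section (so $s_0=a+b\zeta_0$ and $s_1=a\zeta_1+b$ are compatible under the transition law), the two local formulas for $\Theta$ agree on $U_0\cap U_1$, and $\Theta$ is a biholomorphism of the total space with inverse given by translation by $+s$. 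It carries $s(\mathbb{CP}^1)$ onto the zero section $0_s(\mathbb{CP}^1)$, hence induces an isomorphism $T_Fs\cong T_F0_s$ of holomorphic vector bundles, and $T_F0_s=\bigoplus^2\mathcal{O}(1)$.

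For concreteness I would also record the direct transition-function computation in the spirit of Definition \ref{Normal bundle}. Put $\tilde z_0^\alpha:=z_0^\alpha-(a^\alpha+b^\alpha\zeta_0)$ and $\tilde z_1^\alpha:=z_1^\alpha-(a^\alpha\zeta_1+b^\alpha)$, so that $s(\mathbb{CP}^1)$ is cut out by $\tilde z_0=0$ on $U_0\times\mathbb{C}^2$ and by $\tilde z_1=0$ on $U_1\times\mathbb{C}^2$; this is exactly the system of canonical local charts around $s(\mathbb{CP}^1)$. Substituting $z_1^\alpha=\tilde z_1^\alpha+a^\alpha\zeta_1+b^\alpha$ into $z_0^\alpha=\zeta_0 z_1^\alpha$ and using $\zeta_0\zeta_1=1$ yields $\tilde z_0^\alpha=\zeta_0\,\tilde z_1^\alpha$, so the normal-bundle transition matrix $\big(\partial\tilde z_0^\alpha/\partial\tilde z_1^\beta\big)\big|_{\tilde z=0}=\zeta_0\,\mathcal{I}_2$, which is precisely the cocycle defining $\bigoplus^2\mathcal{O}(1)$. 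Hence $T_Fs\cong\bigoplus^2\mathcal{O}(1)$.

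There is no genuine obstacle in this argument: the assertion is the special case, for the trivial total space, of the general principle that the normal bundle of a section inside the total space of a vector bundle is that vector bundle, and the only thing requiring verification is that translation by the prescribed holomorphic section respects the gluing, which is immediate from $z_0^\alpha=\zeta_0 z_1^\alpha$ together with $s_0=a+b\zeta_0$, $s_1=a\zeta_1+b$. The same computation applies verbatim to every member $V_{(a,b)}=\varphi(a,b)(\mathbb{CP}^1)$ of the analytic family $\pi\colon\mathcal{V}\to H^0\big(\mathbb{CP}^1,\bigoplus^2\mathcal{O}(1)\big)$, so each such $V_{(a,b)}$ has normal bundle $\bigoplus^2\mathcal{O}(1)$ and the whole family lies in the parameter space $\mathcal{M}$ produced by Lemma \ref{open submanifold}.
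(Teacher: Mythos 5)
Your proposal is correct and its second paragraph is essentially the paper's own proof: the paper likewise introduces the shifted coordinates $\tilde z_i^\alpha = z_i^\alpha - s_i^\alpha$ and reads off the normal-bundle transition matrix $\zeta_0^{\pm 1}\mathcal{I}_2$, i.e.\ the cocycle of $\bigoplus^2\mathcal{O}(1)$. The preliminary observation that fiberwise translation by $-s$ is a biholomorphism carrying $s(\mathbb{CP}^1)$ to the zero section is a pleasant conceptual gloss but not needed beyond the computation you already give.
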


\begin{proof}
Consider the open charts $\big(\mathscr{U}_i := U_i \times \mathbb{C}^2; \zeta_i, z_i^1, z_i^2\big)$ on $\bigoplus^2\mathcal{O}(1)$. The transition functions are given by
\[\begin{cases}
\zeta_0 \cdot \zeta_1 = 1,\\
z_0^1 = \zeta_0 \cdot z_1^1, & z_0^2 = \zeta_0 \cdot z_1^2.
\end{cases}\]
By the definition of $s$ above,
\[\begin{cases}
s \cap \mathscr{U}_0 \text{ is represented by } & z_0^1 = a^1 + b^1\zeta_0 \quad z_0^2 = a^2 + b^2\zeta_0,\\
s \cap \mathscr{U}_1 \text{ is represented by } & z_1^1 = a^1 \zeta_1 + b^1 \quad z_1^2 =  a^2 \zeta_1 + b^2.
\end{cases}\]
Introduce alternative local coordinates $(\eta_0,\tilde{z}_0^1,\tilde{z}_0^2)$ on $\mathscr{U}_0$ by setting $\eta_0 = \zeta_0$ and $\tilde{z}_0^\alpha = z_0^\alpha - (a^\alpha + b^\alpha\zeta_0)$ for $\alpha=1,2$. Similarly, on $\mathscr{U}_1$, set $\eta_1 = \zeta_1$ and let $\tilde{z}_1^\alpha = z_1^\alpha - (a^\alpha \zeta_1 + b^\alpha)$, where $\alpha=1,2$. Let $\big(\mathscr{V}_i = \mathscr{U}_i \cap s(\mathbb{CP}^1); \tilde{z}_i^1, \tilde{z}_i^2\big)$ be open charts on $s(\mathbb{CP}^1)$. Then, the normal bundle $T_Fs$ of $s(\mathbb{CP}^1)$ is represented by a transformation matrix from $\mathscr{V}_0$ to $\mathscr{V}_1$, as defined in Definition \ref{normal bundle},
\[
B(\zeta) = \left( \left. \frac{\partial \tilde{z}_1^\alpha}{\partial \tilde{z}_0^\beta} \right|_{s(\mathbb{CP}^1)} \right)_{\alpha,\beta=1,2} = \left(
\begin{array}{cc}
\zeta_0^{-1} &  \\
 & \zeta_0^{-1}
\end{array}
\right).
\]
which implies that $T_Fs$ is isomorphic to $\bigoplus^2\mathcal{O}(1)$.
\end{proof}

By Lemma \ref{normal bundle in example} above, $\widetilde{\mathscr{M}_{0_s}^{4n}}$ is precisely the parameter space $H^0\left(\mathbb{CP}^1, \bigoplus^2\mathcal{O}(1)\right) \cong \mathbb{C}^4$, since $H^1(s(\mathbb{CP}^1), T_Fs) = 0$ for each holomorphic section $s$ of $\bigoplus^2\mathcal{O}(1)$, and moreover, we consider
\[
\mathcal{M} := \left\{ V \in \widetilde{\mathscr{M}_{0_s}^{4n}} \Big| V \text{ is a holomorphic section of } \bigoplus^2\mathcal{O}(1) \text{ and } T_FV \cong \bigoplus^2\mathcal{O}(1) \right\}
\]
to be precisely the entire set $\pi^{-1}\Big(H^0\left(\mathbb{CP}^1, \bigoplus^2\mathcal{O}(1)\right)\Big) = \left\{ \varphi(a, b) | (a, b) \in \mathbb{C}^4 \right\}$. In summary, we have the following equation:
\[\mathcal{M} = \widetilde{\mathscr{M}_{0_s}^{4n}} = H^0\left(\mathbb{CP}^1, \bigoplus^2\mathcal{O}(1)\right) \cong \mathbb{C}^4.\]

A holomorphic section $\varphi(a^1, a^2, b^1, b^2)$ is a real section if and only if $(b^1, b^2)$ equals $(-\sqrt{-1}\bar{a}^2, \sqrt{-1}\bar{a}^1)$. Based on this criterion, we can view $\mathcal{M}$ as the complex vector space $\mathbb{C}^4 = (a^1, a^2, b^1, b^2)$, and the hyperkähler manifold $M$ can be considered as a smooth submanifold of $\mathcal{M}$, represented as the set 
\[
\Big\{ (a^1, a^2, -\sqrt{-1}\bar{a}^2, \sqrt{-1}\bar{a}^1) \mid (a^1, a^2) \in \mathbb{C}^2 \Big\}.
\]
This formulation captures the conditions under which a holomorphic section is also a real section, thereby characterizing the elements of $\mathcal{M}$ that correspond to points on the hyperkähler manifold $M$.


\bibliographystyle{plain}
\bibliography{RefBase}

\end{document}